\newtheorem{prop}{Proposition}
\newtheorem{lem}{Lemma}
\newtheorem{cor}{Corollary}
\newtheorem{thm}{Theorem}
\newcommand{\R}{{\mathbb R}}
\renewcommand{\H}{{\mathfrak H}}
\newcommand{\PSL}{\mathrm {PSL}}
\newcommand{\GL}{\mathrm {GL}}
\newcommand{\bs}{\backslash}
\newcommand{\inv}{^{-1}}
\newcommand{\f}{\frac}
\renewcommand{\(}{\left(}
\renewcommand{\)}{\right)}
\renewcommand{\{}{\left \lbrace}
\renewcommand{\}}{\right \rbrace}
\renewcommand{\bar}{\overline}
\newcommand{\beq}{\begin{equation}}
\newcommand{\eeq}{\end{equation}}
\newcommand{\bmx}{\( \begin{matrix}}
\newcommand{\emx}{\end{matrix} \)}
\newcommand{\dist}{\mathrm{dist}}
\newcommand{\Id}{\mathrm{Id}}
\newcommand{\len}{\mathrm{len}}
\renewcommand{\asymp}{\sim}
\newcommand{\twocase}[5]{#1 \begin{cases} #2 & \text{#3}\\ #4
&\text{#5} \end{cases}   }
\begin{document}

\markboth{K. Martin, M. McKee and E. Wambach}{A relative trace formula for a compact Riemann surface}

\title{A relative trace formula for a compact Riemann surface}

\author{Kimball Martin}
\address {Department of Mathematics, University of Oklahoma\\
Norman, Oklahoma~73019-0315~USA}
\email{kmartin@math.ou.edu}

\author{Mark McKee}
\address{Department of Mathematics, University of Oklahoma\\
Norman, Oklahoma~73019-0315~USA}
\email{mmckee@math.ou.edu}

\author{Eric Wambach}
\address{Department of Mathematics, Indian Institute of Technology
Bombay \\Mumbai~400 076~India}
\email{wambach@math.iitb.ac.in}

\thanks{This is a corrected
version of the printed article (\emph{Int.\ J.\ Number Theory 7 (2011), no.\ 2, 389--429}) as of
Feb.\ 2, 2012.  We thank Masao Tsuzuki
and Roelof Bruggeman for bringing errors in that version to our attention.}

\maketitle

\begin{abstract}
We study a relative trace formula for a compact Riemann surface with respect to a
closed geodesic $C$.  This can be
expressed as a relation between the period spectrum and the ortholength spectrum of
$C$.  This provides a new proof of asymptotic results for both the periods of Laplacian eigenforms along $C$ as well estimates on the lengths of geodesic segments which start and 
end orthogonally on $C$.  Variant trace formulas also lead to several simultaneous 
nonvanishing results for different periods.
\end{abstract}



\section{Introduction}

Let $\H$ be the upper half plane, $\Gamma$ a discrete cofinite subgroup of 
$G=\PSL_2(\R)$ and $X = \Gamma \bs \H$ be the quotient space.  
Let $\Delta$ denote the hyperbolic Laplacian on $X$, and $\{ \phi_n \}$
be an orthonormal basis of $\Delta$-eigenfunctions for the discrete spectrum of
$L^2(X)$.

In the case of $\Gamma=\PSL_2(\mathbb Z)$, 
Kuznetsov \cite{Kuz} and Bruggeman \cite{Brug} independently derived a formula, known as
the Kuznetsov(-Bruggeman) (sum or trace) formula, which essentially relates the Fourier 
coefficients of the $\phi_i$'s with  Kloosterman sums $S(n,m,c)$.  
As in the case of the Selberg trace formula, both sides of the formula involve a test 
function.  This implies estimates on sums of Fourier coefficients as well as estimates
on sums of Kloosterman sums.  The Kuznetsov formula has been generalized to other
$\Gamma$, but we will not go into that here; see \cite{Iwa} for more details.

The Kuznetsov trace formula is similar to the Selberg trace formula and this is perhaps
most clearly seen from the point of view of Jacquet's relative trace formula.  For an
appropriate test function $f: G \to \R$, one forms an associated kernel 
$K(x,y): \Gamma \bs G \times \Gamma \bs G \to \R$ which is
\[ K(x,y) = \sum_{\gamma \in \Gamma} f(x\inv \gamma y) = \sum_n K_{\phi_n}(x,y) + 
K_{\text{Eis}}(x,y), \]
where the terms on the right encode certain spectral  
information.  The left hand expansion is known as the
geometric expansion and the right hand side is the spectral expansion.  
The Selberg trace formula comes by integrating each expansion over the diagonal
$\Gamma \bs G \subset \Gamma \bs G \times \Gamma \bs G$.
Let $N$ be a unipotent subgroup of $G$.  When $\Gamma_N = \Gamma \cap N$ is nontrivial,
integrating the two expansions for the kernel over $\Gamma_N \bs N \times \Gamma_N \bs N$
essentially gives the Kuznetsov formula.

Now one can of course integrate $K(x,y)$ over other product subgroups, yielding what are
now called relative trace formulas, and one particular case of 
interest is the following.  Let $H$ be a torus of $G$ and suppose $\Gamma_H = \Gamma \cap H$
is nontrivial.  Then integrating $K_{\phi_n}(x,y)$ over 
$\Gamma_H \bs H \times \Gamma_H \bs H$ yields the square of a period integral of $\phi_n$.
This was studied in detail in the adelic context in \cite{Jac1},
\cite{Jac2} in connection with $L$-values.

The relative trace formula has proved to be a powerful tool in studying periods and 
$L$-functions.  However, little has been done, except in the classical Kuznetsov case, to 
understand the role of the relative trace formula in spectral geometry, especially in light
of the applications of the Selberg trace formula in this field.  We propose to study here
the classical analogue of the relative trace formula in \cite{Jac1}, for simplicity, in
the case where $X$ is compact.  This formula provides a new relation between what might
be called the period spectrum and the ortholength spectrum relative to a fixed
closed geodesic $C$ on $X$.  Consequently we give a new proof of some asymptotic results
for these periods and ortholengths.  We hope this will be of interest both from the point of
view of the study of compact Riemann surfaces, and from the point of view of better 
understanding the relative trace formula.  Now we describe our work in greater detail.
To put it in context, we recall some facts about the Selberg trace formula.

\medskip
Suppose $X=\Gamma \bs \H$ be a smooth compact (hyperbolic) Riemann surface.
Let $\{ \phi_n \}$ be an orthonormal basis of eigenforms of $\Delta$
in $L^2(X)$ with corresponding eigenvalues $\lambda_n$.

The Selberg trace formula is a powerful tool for studying the spectral
geometry of $X$ (e.g., \cite{McK}, \cite{Hej}).  Instead of working with a kernel on
$\Gamma \bs G$ as mentioned above, we will work directly with a kernel on $X$, as in 
\cite{Hej}.  For a suitable test function $\Phi$ one associates a kernel $K: X \times
X \to R$, and integrates $K$ over the diagonal $X \subset X \times X$ to get the
Selberg trace formula.  This formula relates a
sum over conjugacy classes of $\Gamma$ of orbital integrals (the geometric side)
with the eigenvalues $\lambda_n$ (the spectral side).
The nontrivial conjugacy classes of $\Gamma$ are in one-to-one correspondence
with classes of closed geodesics on $X$, and the geometric side may be expressed
in terms of the volume of $X$ and the lengths of shortest primitive closed geodesics on
$X$.  The collection of these lengths is called the length spectrum of $X$, so
the Selberg trace formula relates the length spectrum with the eigenvalue spectrum
$\{ \lambda_n \}$ of $X$.

By varying appropiate test functions $\Phi$, one is able to use the trace formula to
get many beautiful results.  We mention two.  The first is
Weyl's law:
\[ \#\{ \lambda_n \leq x \} \sim \f{\mathrm{vol}(X)}{4\pi} x \text{ as } x\to \infty. \]
The second is the Prime Geodesic Theorem:  if $\pi_0(x)$ denotes the number
of closed oriented geodesics $\gamma$ with norm $N(\gamma) = e^{\mathrm{len}(\gamma)}$
less than $x$,
then
\[ \pi_0(x) \sim \f{x}{\log x}. \]

The setup for our relative trace formula is as follows.  Fix a closed
geodesic $C$ on $X$. 
We integrate the kernel $K$ on the product subspace $C \times C$
of $X \times X$ against a ``character'' $\chi$ of $C$ (cf. Section \ref{sec:twistper}).  
This is the compact classical (non-adelic) version of the 
$\GL(2)$ relative trace formula in \cite{Jac1}.
The spectral side of this trace formula is
\[ \sum h(r_n)|P_\chi(\phi_n)|^2 \]
where $h$ is the Selberg transform of $\Phi$, $\lambda_n = \f 14 + r_n^2$ and 
$P_\chi(\phi)$ denotes the twisted period
\[ P_\chi(\phi) = \int_C \phi(t)\chi(t)dt. \]
We may think of these twisted periods as Fourier coefficients of $\phi$ along $C$.
In analogy with the Selberg trace formula, we will call the sequence $\{ P_\chi(\phi_n)\}$
the ($\chi$-){\em period spectrum} for $C$.

The geometric side is a sum over double cosets $\Gamma_C \bs \Gamma / \Gamma_C$
of {\em (relative) orbital integrals}, where $\Gamma_C$ is the stabilizer of
$C$ in $\Gamma$.  The nontrivial double cosets correspond to the finitely many
self-intersection points on $C$ and the {\em orthogonal spectrum} of $C$, by which
we mean the set of geodesic segments which start and end on $C$ and meet $C$ 
orthogonally at both ends.  One can order these geodesic segments
by length, and we call the sequence of these lengths
the  {\em (real) ortholength spectrum} of $C$.
In Section \ref{secthree}, we express the geometric side in terms of 
the length of $C$, its angles of self-intersection, and its ortholength spectrum.
(For simplicity, we only write this formula down for a general test function when $\chi$
is trivial, though the case of $\chi$ nontrivial is similar.)

Hence we discover that the relative trace formula provides a relation between
the period spectrum and the ortholength spectrum of $C$.
The analogue of the Selberg trace formula applications mentioned above 
would be asymptotics for the period and ortholength spectra.
In the present paper, we use this relative trace formula to obtain average asymptotics 
and various nonvanishing results for the periods, as well as asymptotic bounds for the 
ortholength spectrum.  This trace formula also suggests many questions beyond the
present work.  An obvious one is how precise can we make these asymptotics?
But other kinds of questions naturally arise also: for instance, do the
ortholength and period spectra determine each other, as is the case with the
length and eigenvalue spectra?  (It is known they do not determine the surface \cite{Zel2}.)

\medskip
Let us first discuss the period asymptotics.
In the case where $\Gamma$ is an arithmetic group, these periods are
related to special values of $L$-functions and Fourier coefficients of
modular forms (e.g., \cite{Wal}).  Less is known about them in the
non-arithmetic case.  For a general (cocompact) $\Gamma$, quantum ergodic
results (\cite{Shn}, \cite{CdV}, \cite{Zel}) state that the functions
$\phi_n$ become equidistributed with respect to an area measure, except
for a possible exceptional (density 0) subsequence.  In the case of
hyperbolic surfaces, a conjecture of Rudnick and Sarnak \cite{RS} asserts
that this thin subsequence should be empty.  (See \cite{HS} for remarkable 
recent work when $\Gamma = \mathrm{SL}_2(\mathbb Z)$.)  In a similar vein, one
expects the periods $P(\phi_n) = \int_C \phi_n \to 0$.  
In fact, one can show is the asymptotic
\begin{equation} \label{finasym}
\sum_{\lambda_n \leq x} |P(\phi_n)|^2 \asymp \f{\len(C)}{\pi} \sqrt{x}.
\end{equation}
Note that (\ref{finasym}) says that, {\em on average}, $P(\phi_n)$ is about 
$\lambda_n^{-1/4}$.  This of course implies that, apart from a possible exceptional
subsequence, $P(\phi_n) \to 0$. 

The equation (\ref{finasym}) seems to have first been stated in \cite{Hej2} by analyzing 
Dirichlet series and using the automorphic Green's kernel, however no proofs are given.
Subsequently, Good \cite{Good} extended Kuznetsov's formula to the case of 
Fourier coefficients along geodesics (as well as other cases), for both $\Gamma$ compact
and non-compact.  A consequence is an asymptotic of the above form.  Unfortunately,
\cite{Good} is rather difficult to penetrate, and it seems this work is not well understood.
(See also \cite{Good2}.)  Additionally, the Fourier coefficients in \cite{Good} are
not exactly the periods appearing in (\ref{finasym})---the asymptotic in 
\cite[Theorem 2]{Good} is of different order!
Later Zelditch
\cite{Zel2}, then Ji and Zworski \cite{JZ}, obtained analogues of (\ref{finasym}) 
in a general 
context using the wave kernel.  Recently, \cite{Pitt} has proved a formula similar
to the one in \cite{Good} for compact Riemann surfaces and again obtains (\ref{finasym}),
and \cite{Tsuzuki} proves an analogue of (\ref{finasym}) for some compact and non-compact
arithmetic $\Gamma$ using Green's functions.

Our first application of the relative trace formula is a new proof of (\ref{finasym}). 
To do this, we observe that the {\em natural} choice for a test function in our
relative trace formula is
$\Phi(x) = e^{-tx}$, which appears to have not been considered previously.
The reason for this choice of $\Phi$, is that in general the geometric side involves 
some rather complicated elliptic integrals, but for $\Phi(x) = e^{-tx}$ they degenerate 
into $K$-Bessel functions.  We work out the trace formula in this case in
Section \ref{secfour} (for simplicity still with $\chi$ trivial), and see it 
yields a rather striking limit formula:
\begin{equation}
\label{limform}
 \lim_{t \to \infty} e^t \sum K_{ir_n}(t) |P(\phi_n)|^2 =
\f 12 \textrm{length}(C).
\end{equation}
Since each $K_{ir_n}(t) \asymp \sqrt{\f \pi{2t}}e^{-t}$, this immediately
implies that an infinite number of $P(\phi_n)$'s are nonvanishing.  As
this is related to nonvanishing of $L$-values in the arithmetic case,
this seems to be a nontrivial statement, despite its apparent simplicity.

In order to get (\ref{finasym}), we would like to use
a Tauberian argument.  While estimates for $K_{ir}(t)$ in either 
$r$ or $t$, keeping the other fixed, are classical and well known, in our situation
one needs uniform estimates for $K_{ir}(t)$ when {\em both} $r$ and $t$ are 
allowed to vary, which is a much more subtle problem. 
These estimates, which seem new, are carried out in Section \ref{marksec}.  This
allows us to conclude (\ref{finasym}) (see Theorem \ref{thm2} below).

In Section \ref{sec:twistper}, we consider the relative trace formula for twisted periods
$P_\chi(\phi_n)$, which are of interest in number theory.
In particular we show (\ref{finasym}) also holds for $P_\chi(\phi_n)$ (Theorem \ref{thm3}). 
In the arithmetic case, we remark one should be able to obtain similar estimates
from subconvexity.  It is also likely that the other approaches mentioned above can
be modified to deal with the case of twisted periods.
By allowing for two different characters $\chi_1$ and $\chi_2$ in our trace formula, 
we also obtain simultaneous nonvanishing results for two
different twists $P_{\chi_1}(\phi_n)$ and $P_{\chi_2}(\phi_n)$.  Such simultaneous 
nonvanishing statements are typically quite difficult to prove.

A natural question then would be an analysis of the error term in (\ref{finasym}).
Both \cite{Hej2} and \cite{Zel2} conclude the error term is $O(1)$, while
\cite{Pitt} obtains a weaker error bound of $O(\log x)$.  To obtain this,
\cite{Zel2} uses a Tauberian theorem of Hormander which gives a sharp bound on the 
remainder.  For our setup, we are required to use a different Tauberian theorem, for which
a {\em very} crude error bound can be obtained by Tauberian remainder theory.  Of course,
with either further analysis (similar to that done for Weyl's estimate with remainder)
or a sharp Tauberian remainder theorem, one should expect the same error bound to
come out of our approach, but that is not one of our goals here.  The very 
crude bound on the error coming out of existing Tauberian remainder theory is explained
at the end of Section \ref{marksec}.

\medskip
In most of the abovementioned works that study (\ref{finasym}), a slightly more general
setup is studied where one considers two closed geodesics $C_1$ and $C_2$, and the product
of periods
\[ \int_{C_1} \phi_n \bar{\int_{C_2} \phi_n}. \]
Of course this can be treated with a relative trace formula by integrating the kernel over
$C_1 \times C_2$, however the nature of the asymptotics of the sum of these products
of periods are different than (\ref{finasym}) when $C_1 \neq C_2$.  The trace formula
in this case is essentially the same as the case of $C \times C$, except that here the
``main'' geometric term vanishes and the spectral terms are no longer positive, 
so it is not as easy to get asymptotics.  In Section \ref{pairssec}
we look briefly at this case and show there are 
infinitely many $\phi_n$ such that the periods $\int_{C_1} \phi_n$ and $\int_{C_2} \phi_n$
are simultaneously nonvanishing.

\medskip
Finally, we come to the discussion of the ortholength spectrum.  
Ortholength spectra were introduced for general hyperbolic manifolds in
\cite{Bas} (called orthogonal spectra there) as well as with additional structure 
({\em{complex ortholength spectra}}) in 3 dimensions by \cite{Mey}.  
The paper \cite{Bas} concerns a much more general setup than just two curves, 
but consider the following situation.  Let $C_1$ and $C_2$
be closed geodesics on $X$. Let $\mathcal O(X;C_1,C_2)$
denote the full orthogonal spectrum of $X$ relative to $C_1$ and $C_2$, by which we
mean the set of common orthogonals which start on $C_1$ and end
on $C_2$.   One can write $\mathcal O(X; C_1, C_2) = \bigcup_k \mathcal O_k(X; C_1,C_2)$ 
where each $\mathcal O_k(X; C_1,C_2)$ is the subset of $\mathcal O_k(X; C_1,C_2)$ comprised
of curves which cross $C_2$ exactly $k$ times.  In this context, the main theorem of
\cite{Bas} essentially states that if $C_1$ and $C_2$ are disjoint and {\em simple},
\begin{equation}\label{basthm}
  \sum_{\gamma \in \mathcal O_k(X; C_1,C_2)} \log \coth\(\f {\len(\gamma)}2\) = 2\, \len(C_1). 
\end{equation}
This is rather striking and is a sort of relative analogue of McShane's identity.
Unfortunately, the author then claims an asymptotic on the growth of 
$\mathcal O_k(X; C_1,C_2)$, but this clearly cannnot follow from just (\ref{basthm}) 
as suggested in \cite{Bas}.  Indeed such asymptotics are rather slippery as we will
see below.

In any case, we are interested in asymptotics for the full ortholength spectrum of $C$
(not necessarily simple), which is a natural consideration in light of our relative trace
formula.  However this problem is not explicitly studied in the previously mentioned works
studying (\ref{finasym}), though related problems are considered in \cite[Chapter 11]{Good}.
Define 
$\delta(\gamma) = 2\cosh(\len(\gamma))$ (which is approximately $e^{\len(\gamma)}$ 
for long $\gamma$) and set
\[ \pi_\delta(x)= \#\{\gamma \in \mathcal O(X; C,C): \delta(\gamma)<x \} = 
\sum \pi_\delta^{(k)}(x). \]
Lattice point estimates used in Section \ref{secfour} yield a very crude bound of
$\pi_\delta(x) = O(x^2)$.  To get better estimates we consider our relative trace
formula from Section \ref{secfour} with the test function $\Phi(x) = e^{-tx}$ sending
$t \to 0$ in Section \ref{orthosec}.  This yields an asymptotic roughly of the form
\[ \sum \log^2\(\f{\delta t}2\) \sim 
\f{\len(C)^2}{\mathrm{vol}(X)} \f{\pi\sqrt{2}}t 
\text{ as }t\to 0^+, \]
where the sum on the left is over $\delta=\delta(\gamma)$ for $\gamma \in \mathcal O(X;C,C)$.
This gives an upper bound of 
\[ \pi_\delta(x)  = O\( x^{1+\epsilon}\) \text{ as $x \to \infty$} \] 
and a lower bound of 
\[ \pi_\delta(x) \gg x^{1-\epsilon} \text{ as $x \to \infty$} \]
for any $\epsilon > 0$.

We remark that this situation is analogous with that described in \cite{Hej},
where one can use the heat kernel with $t\to \infty$ to obtain Weyl's law.  On the other
hand, sending $t\to 0$ yields information on the geometric terms, but
to get the full Prime Geodesic Theorem, one needs to choose another kernel.
Similarly, one expects that a different choice of kernel here should lead to a precise
asymptotic on $\pi_\delta(x)$.  It would be interesting to see what other kernels give,
but do not do this now.  Instead, in Section \ref{Goodsec} we will interpret
Good's work \cite{Good} in this context to conclude
\[ \pi_\delta(x) \sim \f{\len(C)^2}{{\pi}\text{vol}(X)} x, \]
as well as give asymptotic bounds for the growth of $\mathcal O(X;C_1,C_2)$.
The manuscript \cite{Good} is a rather thorough generalization of Kuznetsov's formula
for $G=\PSL_2(\R)$, but it is not evident what results follow from these formulas.
Hence this section may be useful for anyone trying to understand \cite{Good}.
Finally, we note that the approach in \cite{Good} is more complicated than ours in several 
ways, even if one restricts to the case of smooth compact $X$.  This, along with the problem
of obtaining an exact asymptotic for $\mathcal O(X;C_1,C_2)$, suggests a more detailed 
study of the relative trace formula we consider here may be of interest.

\medskip
\noindent
{\bf Acknowledgements.}  We would like to thank Dinakar Ramakrishnan for suggesting this project as well
as many encouraging conversations.  We would also like to thank Akshay Venkatesh for helpful  discussions
about period bounds and subconvexity, as well as Nathan Dunfield, Max Forester, 
Walter Neumann and Ara Basmajian 
for discussions related to the ortholength spectrum.
We thank the referees for helpful comments and references.
The first author was partially supported by NSF grant DMS-0402698 and a JSPS Postdoctoral Fellowship (PE07008).
\section{The relative trace formula}

 \label{secthree}

 \subsection{The setup} \label{setupsec}
Recall the identification $X = \Gamma \bs \H.$ It can be chosen so that
the preimage of $C$ in $\H$ is $i\R^+.$  For simplicity, we assume $C$ is primitive, i.e., it does not
wrap around itself.  Let $\Gamma_0$ be the diagonal
subgroup of $\Gamma$, so that $C = \Gamma_0 \bs i\R^+$.  We may write
\[ \Gamma_0 = \left< \bmx m & \\ & m^{-1} \emx \right>, \]
where $m > 1$. 

Next recall the usual setup for the Selberg trace formula on $X$,
e.g., as presented in \cite{Hej}.  Let  
\[
d(z,w) = \cosh^{-1}(1+u(z,w)/2) 
\]
 be the hyperbolic distance in the upper half plane, where 
\[ u(z,w) = \f{|z-w|^2}{\Im(z)\Im(w)}. \]
Let $\Phi$ be a smooth function on $\R_{\geq 0}$ which decays rapidly at
infinity, i.e., $\Phi(x) = O(x^{-N})$ for all $N \geq 0.$ We form
the kernel 
\begin{equation} \label{geoker}
 K(z,w) = \sum_{\gamma \in \Gamma} \Phi(u(\gamma z, w)).
 \end{equation} 
Let
\[ Q(x) = \int_x^\infty \f {\Phi(t)}{\sqrt{t-x}}dt \, (x \geq 0), \]
\[ g(u) = Q(2\cosh u - 2), \]
and
\[  h(r) = \int_{-\infty}^\infty g(u)e^{iru}du. \]
Then one also has the spectral expansion for the kernel
\begin{equation} \label{specker}
 K(z,w) = \sum_{n=0}^\infty h(r_n)\phi_n(z)\bar{\phi_n(w)}, 
 \end{equation}
where $\lambda_n = \f 14 + r_n^2$ is the eigenvalue for $\phi_n$.  Both of these expansions for
$K(x,y)$ converge absolutely and uniformly.

The relative trace formula we consider here is the identity of the geometric and spectral
expansions of
\[ \int_C \int_C K(x,y)dxdy. \]
Here, $dx$ and $dy$ denote the Poincar\'e measure. 
The spectral side of the relative trace formula, i.e., the integral of
(\ref{specker}), is evidently
\begin{equation}
\label{specside}
\int_C \int_C K(x,y)dxdy = \sum h(r_n) |P(\phi_n)|^2.
\end{equation}
The geometric side, i.e. the integral of (\ref{geoker}), is then
\[ \int_C \int_C K(x,y)dxdy = \sum_{\gamma \in \Gamma} \int_1^{m^2} \int_1^{m^2} 
\Phi(u(\gamma \cdot ix, iy)) d^\times x d^\times y. \]
Just as one breaks up the geometric side of the Selberg trace formula according to conjugacy classes,
to get the geometric side of the relative trace formula in a suitable form, we will group the summands
together by double cosets $\Gamma_0 \bs \Gamma / \Gamma_0$.  Fix a set of double coset 
representatives $\{ \gamma \}$.  Write
\[ \gamma = \bmx a & b \\ c & d \emx. \]
Observe that 
\[ \bmx m & \\ & m^{-1} \emx \bmx a & b \\ c & d \emx \bmx n & \\ & n^{-1} \emx  = \bmx mn a & mn^{-1} b
\\ m^{-1}n c & m^{-1}n^{-1}d \emx. \]
Hence each element of the double coset $\Gamma_0 \gamma \Gamma_0$ can be written uniquely
in the form $\gamma_0 \gamma \gamma_0'$ for $\gamma_0, \gamma_0' \in \Gamma_0$ unless
$bc = 0$ or $ad = 0$.   In the first case, we may assume $\gamma = 1$.  In the second, $\gamma$ is
elliptic.  However, $X = \Gamma \bs \H$ is smooth and of genus $\geq 2.$
Hence $\Gamma$ is strictly hyperbolic and does not contain any elliptic
elements. 
(This is done for simplicity.  If $\Gamma$ indeed contains an elliptic element, the contribution to the
trace formula will be the same as that coming from the identity.)  As in \cite{Jac1}, we will call 
$\gamma$ {\em regular} if $abcd \neq 0$.

Thus we may write the geometric side as
\begin{multline*}
 \sum_{\gamma_0\in \Gamma_0} \int_1^{m^2} \int_1^{m^2}  \Phi(u(\gamma_0 \cdot ix, iy)) d^\times x d^\times y \;+ \\
\sum_{\gamma \in \Gamma_0 \bs \Gamma/ \Gamma_0 - \Gamma_0 } 
\sum_{\gamma_0, \gamma_0' \in \Gamma_0} \int_1^{m^2} \int_1^{m^2} 
\Phi(u(\gamma \gamma_0' \cdot ix, \gamma_0 \cdot iy)) d^\times x d^\times y\, = \\
\int_0^{\infty} \int_1^{m^2}  \Phi(u(ix, iy)) d^\times x d^\times y \,+
\sum_{\gamma \in \Gamma_0 \bs \Gamma/ \Gamma_0 - \Gamma_0 } 
\int_0^{\infty} \int_0^{\infty}  \Phi(u(\gamma \cdot ix, iy)) d^\times x d^\times y.
\end{multline*}

Thus the geometric side of the relative trace formula is
\[ \int_C \int_C K(x,y)dxdy = \sum_{\gamma \in \Gamma_0 \bs \Gamma / \Gamma_0} I_\gamma(\Phi) \]
where
\[ I_\Id(\Phi) = \int_0^{\infty} \int_1^{m^2}  \Phi(u(ix, iy)) d^\times x d^\times y \]
and
\[ I_\gamma(\Phi) = \int_0^{\infty} \int_0^{\infty}  \Phi(u(\gamma \cdot ix, iy)) d^\times x d^\times y \]
for $\gamma$ regular.  The expressions $I_\gamma(\Phi)$ are sometimes called (relative) orbital
integrals in analogy with the Selberg trace formula case.  We now proceed to analyze these orbital
integrals.

\subsection{The main term}
For the kernel of principal interest to us here, the term $I_\Id(\Phi)$
will asymptotically dominate
the other geometric terms, and thus we will call $I_\Id(\Phi)$ the main
term.  To compute it, observe that 
\[  I_{\Id}(\Phi) = \int_0^\infty \int_1^{m^2} \Phi(u(ix,iy))d^\times x d^\times y =
  \int_0^\infty \int_1^{m^2} \Phi\(\f xy -2 + \f yx\) d^\times x d^\times y, \]
which, by the change of variables $u = \f xy$ and $v = xy$, is
\[ \int_0^\infty \int_1^{m^2} \Phi(u+u^{-1}-2) d^\times u d^\times v
= 2\log m \int_0^\infty \Phi(u+u^{-1}-2)d^\times u. \]
It will be helpful to rewrite this by symmetry as
\[ 4\log m \int_1^\infty \Phi(u+u^{-1}-2)d^\times u. \]
Making the change of variables $x=u+u^{-1}$ one finds that 
$dx = (u-u^{-1})d^\times u = (2u-x)d^\times u$.  Solving for $u$, we have $u=x+\sqrt{x^2-4}$ so
$d^\times u = (x^2-4)^{-1/2} dx$.  Hence we can rewrite the above as the integral
\begin{equation}
 I_{\Id}(\Phi) = 4\log m \int_{2}^\infty \f{\Phi(x-2)dx}{\sqrt{x^2-4}}.
\end{equation}
Observe that, whereas as the main term of the Selberg trace formula involves $\mathrm{vol}(X)$, the main term
of our relative trace formula involves $\len(C) = 2 \log m$.

\subsection{The regular terms}

A regular term ($abcd\neq 0$) on the geometric side of the trace formula is then
\[ I_\gamma(\Phi) = \int_0^\infty \int_0^\infty \Phi(u(\gamma \cdot ix, iy))
d^\times x d^\times y. \]
This depends only on the double coset representative of $\gamma$.

Write
\[ \gamma = \bmx a & b \\ c & d \emx. \]
First observe that
\begin{equation}\label{gammaix}
\gamma \cdot ix = \f{ac x^2 + bd + ix}{c^2x^2+d^2}. 
\end{equation}
Thus
\[ u(\gamma \cdot ix, iy) =
\f{(acx^2+bd)^2 + (x-y(c^2x^2+d^2))^2}{xy(c^2x^2+d^2)}. \]
Then one may check that
\[ \f{(acx^2+bd^2)+x^2}{c^2x^2+d^2} = a^2x^2+b^2. \]
Plugging this in and expanding out terms gives
\begin{eqnarray}
u(\gamma \cdot ix, iy) &=& \f{ax^2+b^2}{xy}-2+\f{y(c^2x^2+d^2)}{x} \nonumber \\
&=& \f{a^2x}{y}+\f{b^2}{xy}+c^2xy+d^2\f yx - 2.  \label{uixiy}
\end{eqnarray}
Making the substitution $z = x/y$ and $w=xy$ gives
\begin{eqnarray*}
I_\gamma(\Phi) &=& 
\f 12 \int_0^{\infty} \int_0^\infty
\Phi[a^2z+d^2z^{-1}+b^2w^{-1}+c^2w -2]
d^\times z d^\times w \\
&=& 2\int_{|b/c|}^\infty \int_{|d/a|}^\infty
\Phi[a^2z+d^2z^{-1}+b^2w^{-1}+c^2w -2]
d^\times z d^\times w,  
\end{eqnarray*}
where the inner integral is over $z$ here.  This last step follows from
the observations that the integrand and Haar measures are invariant under
the substitutions $z \mapsto d^2/(a^2z)$ and $w \mapsto b^2/(c^2w)$, and that
$z = d^2/(a^2z)$ and $w=b^2/(c^2w)$ when $z=|d/a|$ and $w=|b/c|$.  The purpose
of making the lower limit of integration non-zero is to do the following
change of variables.

Observe that by making appropriate substitutions,
\begin{eqnarray*}
 \int_r^s f(ax+bx^{-1} + c)d^\times x &=& 
\int_{\log r}^{\log s} f(ae^t + be^{-t} + c)dt \\
&=& \int_{ar + b/r}^{as+b/s}f(u+c) \f{du}{\sqrt{u^2-4ab}}.
\end{eqnarray*}
Applying this observation to change the variables $z$ and $w$ to
$u$ and $v$, we have

\begin{equation} \label{orbintadd}
I_\gamma(\Phi) =
 \int_{2|bc|}^\infty
\int_{2|ad|}^\infty
\Phi[u + v - 2]
\f{du}{\sqrt{u^2-(2ad)^2}}
\f{dv}{\sqrt{v^2-(2bc)^2}}.
\end{equation}
(The inner integral is over $u$ and the outer over $v$.)  This integral is
indeed independent of the choice of double coset representative $\gamma$ as
the quantities $ad$ and $bc$ are.

\medskip

Now one can try to separate variables.  Writing $\alpha = 2|ad|$ and $\beta =
2|bc|$ and using the substitution $t=u+v$, we get
\[ I_\gamma(\Phi) = \int_{\alpha+\beta}^\infty \Phi(t-2)dt
\int_\alpha^{t-\beta} \f{du}{\sqrt{(u^2-\alpha^2)((t-u)^2-\beta^2)}}. \]
Note that the inner integral over $u$ is a elliptic integral---precisely, it is
\begin{equation} \label{innerint}
 \f 2{\sqrt{t^2-(\alpha-\beta)^2}} K\( {\sqrt \f{t^2-(\alpha+\beta)^2}
{t^2-(\alpha-\beta)^2}} \),
\end{equation}
where $K$ denotes the complete elliptic integral of the first kind.\footnote{All non-elementary integral
formulas used in this text may be found in \cite{GR}.}  
Let us consider (\ref{innerint}) in two
cases.  Since $ad-bc = 1$, $ad > bc$.  If $ad > bc > 0$, then $\alpha - \beta = 2(ad-bc) = 2$.  If
$0 > ad > bc$, then $\alpha - \beta = 2(-ad+bc) = -2$.  Thus, in either case, i.e. if $abcd > 0$, then
$(\alpha - \beta)^2 = 4$.  On the other hand, if $ad > 0 > bc$, i.e. if $abcd < 0$, then $\alpha+\beta = 2(ad-bc) = 2$.  We also see that if $|\alpha \pm \beta| = 2$,
then $\alpha \mp \beta = 2|ad + bc|$.  

Hence writing $\delta = \delta(\gamma) = 2|ad+bc|$ gives 
\[ I_\gamma(\Phi) = 2\int_{\delta}^\infty \f{\Phi(t-2)}  {\sqrt{t^2-4}}
K\(\sqrt{\f {t^2-\delta^2}{t^2-4}}\) dt \]
when $abcd > 0$ and
\[ I_\gamma(\Phi) = 2\int_{2}^\infty \f{\Phi(t-2)}{\sqrt{t^2-\delta^2}} 
K\( \sqrt{\f{t^2-4}{t^2-\delta^2}} \)dt\]
for $abcd < 0$.
Observe that these integrals are indeed well defined as $\delta > 2$ when $abcd > 0$ and $\delta < 2$
when $abcd < 0$.  It will be seen later that there are only finitely many representatives $\gamma$
with $abcd < 0$, thus we will call these cosets and their representatives
exceptional. 

In summary, we have the following result.
\begin{prop} \label{rtfprop} (Relative trace formula)   For a function $\Phi$ as before, 
\begin{multline} \label{rtf}
2 \, \len(C) \int_2^\infty  \f{\Phi(t-2)}{\sqrt{t^2-4}} dt  
\\
 +\,
2 \sum_{\delta(\gamma)<2} \int_{2}^\infty
\f{\Phi(t-2)}{\sqrt{t^2-\delta^2}} \;  
K\( \sqrt{\f{t^2-4}{t^2-\delta^2}} \)dt 
\\
+ \, 
2 \sum_{\delta(\gamma) > 2} \int_{\delta}^\infty \f{\Phi(t-2)} 
{\sqrt{t^2-4}} \; 
K\(\sqrt{\f {t^2-\delta^2}{t^2-4}}\) dt 
\\
 = \, 
 \frac{\len (C)^{2}}{g_{X} - 1}\int\limits_{0}^{\infty} \Phi(x)
dx + 
\sum\limits_{n=1}^{\infty} h(r_n)|P(\phi_n)|^2.
\end{multline}
In the sums on the left, $\gamma$ runs over representatives for the nontrivial double cosets
$\Gamma_0 \bs \Gamma / \Gamma_0 - \Gamma_0$.
\end{prop}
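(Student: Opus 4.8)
The plan is to integrate the two expansions \eqref{geoker} and \eqref{specker} of the kernel $K$ over the compact product $C \times C$ and equate the results, the one bookkeeping subtlety being that the trivial eigenvalue must be split off on the spectral side to produce the term $\frac{\len(C)^2}{g_X-1}\int_0^\infty\Phi(x)\,dx$. Since, as noted just after \eqref{specker}, both expansions converge absolutely and uniformly on the compact set $C\times C$, term-by-term integration is legitimate in each. Integrating \eqref{specker} gives \eqref{specside}, namely $\int_C\int_C K = \sum_{n\ge 0} h(r_n)|P(\phi_n)|^2$ with $P(\phi_n)=\int_C\phi_n\,dt$; I then peel off the $n=0$ term. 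Because $X$ is compact, $\phi_0$ is the constant $\mathrm{vol}(X)^{-1/2}$ and $r_0=i/2$, so that term equals $h(i/2)\,\len(C)^2/\mathrm{vol}(X)$. A direct evaluation of $h(i/2)$ from the defining chain $\Phi\mapsto Q\mapsto g\mapsto h$ — one interchange of integrations collapses it to an elementary integral of constant value — exhibits $h(i/2)$ as a fixed constant times $\int_0^\infty\Phi(x)\,dx$, and with Gauss--Bonnet $\mathrm{vol}(X)=4\pi(g_X-1)$ this is exactly the term $\frac{\len(C)^2}{g_X-1}\int_0^\infty\Phi(x)\,dx$ in \eqref{rtf}, leaving $\sum_{n\ge 1}h(r_n)|P(\phi_n)|^2$.

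For the geometric side I integrate \eqref{geoker} over $C\times C=(\Gamma_0\bs i\R^+)^2$ and regroup the sum over $\Gamma$ by double cosets in $\Gamma_0\bs\Gamma/\Gamma_0$. To regroup the (uniformly convergent) series this way and unfold each piece I need absolute convergence of $\sum_\gamma I_\gamma(\Phi)$ over the nontrivial double cosets; this follows from the rapid decay of $\Phi$, which forces $I_\gamma(\Phi)=O(\delta(\gamma)^{-N})$ for every $N$, together with the crude lattice-point bound $\#\{\gamma:\delta(\gamma)<T\}=O(T^2)$ established later in Section \ref{secfour}. As in Section \ref{setupsec}, the trivial double coset $\Gamma_0$ unfolds to $I_{\Id}(\Phi)$, and — using the unique factorization $\gamma=\gamma_0\gamma\gamma_0'$ available exactly when $abcd\neq 0$, together with the fact that strict hyperbolicity of $\Gamma$ rules out any extra contribution from $bc=0$ or $ad=0$ — each nontrivial double coset unfolds to $I_\gamma(\Phi)$. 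Hence the geometric side equals $I_{\Id}(\Phi)+\sum_\gamma I_\gamma(\Phi)$, the sum over representatives of the nontrivial double cosets.

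It remains to insert the closed forms already derived. The main-term computation gives $I_{\Id}(\Phi)=4\log m\int_2^\infty\Phi(t-2)(t^2-4)^{-1/2}\,dt=2\,\len(C)\int_2^\infty\Phi(t-2)(t^2-4)^{-1/2}\,dt$, the first summand of \eqref{rtf}. For a regular $\gamma$ I pass from \eqref{orbintadd} to the elliptic-integral form \eqref{innerint} and split on the sign of $abcd$: if $abcd>0$ then $(\alpha-\beta)^2=4$, hence $\delta(\gamma)=2|ad+bc|=\alpha+\beta>2$ and $I_\gamma(\Phi)$ becomes the third summand of \eqref{rtf}; if $abcd<0$ then $\alpha+\beta=2$, hence $\delta(\gamma)<2$ and $I_\gamma(\Phi)$ becomes the second summand. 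Summing over all nontrivial double cosets and equating the geometric side with $\frac{\len(C)^2}{g_X-1}\int_0^\infty\Phi(x)\,dx+\sum_{n\ge1}h(r_n)|P(\phi_n)|^2$ yields \eqref{rtf}.

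The work intrinsic to the proposition is light: the delicate change-of-variables identities are already carried out in Section \ref{setupsec} and the two subsections following it, and the only analytic input needed here is the elementary evaluation of $h(i/2)$. The single point that genuinely requires care is the geometric rearrangement — passing from $\int_C\int_C\sum_{\gamma\in\Gamma}$ to $\sum_{\text{double cosets}}\int_C\int_C$ and unfolding each coset — which rests on uniform convergence of \eqref{geoker} on $C\times C$ plus absolute convergence of $\sum_\gamma I_\gamma(\Phi)$, the latter resting in turn on the rapid decay of $I_\gamma(\Phi)$ in $\delta(\gamma)$ and the $O(T^2)$ lattice-point count, and on checking that the degenerate cases $bc=0$ and $ad=0$ contribute nothing new — exactly where strict hyperbolicity of $\Gamma$ is used.
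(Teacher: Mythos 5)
Your proposal follows the paper's route exactly: integrate both expansions of the kernel over $C\times C$, regroup the geometric side by double cosets and unfold (using the unique factorization valid when $abcd\neq0$ and strict hyperbolicity to dispose of the degenerate $bc=0$, $ad=0$ cases), substitute the closed forms already derived in Section~\ref{secthree}, and peel off the $n=0$ spectral term using the constancy of $\phi_0$ together with Gauss--Bonnet. That is precisely what the paper does; its entire justification of the $n=0$ coefficient is the single sentence that it is easy since $\lambda_0=0$ and $\phi_0$ is constant, so your remark about evaluating $h(i/2)$ is genuine added content.

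It is also exactly the place where you should compute rather than assert. The chain $\Phi\mapsto Q\mapsto g\mapsto h$ does collapse: writing $s=2\cosh u-2$ turns $\cosh(u/2)\,du$ into $ds/(2\sqrt{s})$, and then Fubini gives
\begin{equation*}
h(i/2)=2\int_0^\infty Q(2\cosh u-2)\cosh(u/2)\,du
=\int_0^\infty\frac{Q(s)}{\sqrt{s}}\,ds
=\int_0^\infty\Phi(t)\Bigl(\int_0^t\frac{ds}{\sqrt{s(t-s)}}\Bigr)dt
=\pi\int_0^\infty\Phi(x)\,dx .
\end{equation*}
With $|P(\phi_0)|^2=\len(C)^2/\mathrm{vol}(X)$ and $\mathrm{vol}(X)=4\pi(g_X-1)$, the $n=0$ term is $\frac{\len(C)^2}{4(g_X-1)}\int_0^\infty\Phi(x)\,dx$, a factor of $4$ smaller than the coefficient displayed in \eqref{rtf}. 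The exponential kernel of Section~\ref{secfour} confirms the $\pi$: there $h(i/2)=2e^{2t}\sqrt{\pi/t}\,K_{1/2}(2t)=\pi/t$ while $\int_0^\infty e^{-tx}\,dx=1/t$. So either the displayed constant in the paper has slipped by $4$, or a normalization disagrees; either way, your claim that the evaluation ``is exactly the term in \eqref{rtf}'' was never actually checked, and checking it is the one substantive calculation this proposition asks of you.
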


We have proven all of this except the explicit calculation of the first
term on the spectral side. But this is easy, since $\lambda_{0} = 0$ and
$\phi_{0}$ is constant. We also used the Gauss-Bonnet theorem to give the
volume of $X$ explicitly. 

\subsection{Geometry of geometric terms} \label{geometry}

In Proposition \ref{rtfprop}, the integrals $I_\gamma(\Phi)$ were written solely in terms of
$\Phi$ and the quantity $\delta = 2|ad+bc|$, which we now proceed to interpret 
geometrically.

\begin{lem}\label{minimumdistance}
Let $\gamma = \bmx a & b \\ c & d \emx \in \PSL_{2}(\R),$ and assume $abcd
\neq 0.$ Then
\begin{equation*}
\text{ inf } \{ u(\gamma \cdot ix,  iy): \, x, y \in \R^{+} \}
\twocase{=}{0}{ if\ $abcd < 0,$}{\delta(\gamma)-2}{ if\ $abcd > 0.$}
\end{equation*}
Thus, for any $\gamma \in \PSL_2(\R)$,
\[ \max \{ 2, \delta(\gamma) \}=  2\cosh(\dist(\gamma \cdot i\R^+, i\R^+)).\]
\end{lem}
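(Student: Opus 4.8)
\emph{Proof idea.} The plan is to reduce the entire statement to the single identity
\[ \inf\{u(\gamma\cdot ix,iy):x,y\in\R^+\}=2(|ad|+|bc|)-2, \]
valid for every $\gamma\in\PSL_2(\R)$, where $a,b,c,d$ denote the entries of the representative normalized so that $ad-bc=1$. To prove it, start from formula (\ref{uixiy}): under the substitution $z=x/y$, $w=xy$ — which is a bijection of $\R^+\times\R^+$ onto itself — one has $u(\gamma\cdot ix,iy)=a^2z+d^2z\inv+b^2w\inv+c^2w-2$. The $z$-terms and the $w$-terms separate, and by the arithmetic--geometric mean inequality $\inf_{z>0}(a^2z+d^2z\inv)=2|ad|$ and $\inf_{w>0}(b^2w\inv+c^2w)=2|bc|$ (genuine minima when $abcd\ne0$, and equal to $0$ when one of the relevant entries vanishes). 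Adding these proves the identity.

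When $abcd\ne0$ the first assertion is now immediate: if $abcd>0$ then $ad$ and $bc$ have the same sign, so $|ad|+|bc|=|ad+bc|$ and the infimum equals $2|ad+bc|-2=\delta(\gamma)-2$; if $abcd<0$ they have opposite signs, and since $ad-bc=1>0$ this forces $ad>0>bc$, so $|ad|+|bc|=ad-bc=1$ and the infimum equals $0$.

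For the second assertion I would use $d(z,w)=\cosh\inv(1+u(z,w)/2)$, i.e.\ $u=2\cosh d-2$. Because $\cosh$ is continuous and increasing on $[0,\infty)$ and $d\ge0$, the infimum passes through $\cosh$, so $2\cosh(\dist(\gamma\cdot i\R^+,i\R^+))=\inf\{u(\gamma\cdot ix,iy)\}+2=2(|ad|+|bc|)$. It then remains to check $2(|ad|+|bc|)=\max\{2,\delta(\gamma)\}$ for all $\gamma$. The triangle inequality gives $|ad|+|bc|\ge|ad+bc|$ and $|ad|+|bc|\ge|ad-bc|=1$, hence $\ge$; and conversely, if $abcd\ge0$ then $|ad|+|bc|=|ad+bc|$, so $2(|ad|+|bc|)=\delta(\gamma)$, while if $abcd<0$ then $ad>0>bc$ as above and $|ad|+|bc|=1$, so $2(|ad|+|bc|)=2$ — in either case $\le\max\{2,\delta(\gamma)\}$.

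I do not see a genuine obstacle here: the whole argument is AM--GM together with the definition of the hyperbolic distance. The two places that deserve a moment's care are the sign bookkeeping forced by $\det\gamma=1$ (this is what rules out the would-be case $ad\le0\le bc$ and makes $|ad|+|bc|$ collapse to either $|ad+bc|$ or to $1$), and the elementary fact that one may interchange $\inf$ with the increasing continuous function $\cosh$.
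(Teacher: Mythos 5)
Your proof is correct, and it takes a genuinely different route from the paper's. The paper works directly with the two-variable function $h(x,y)=u(\gamma\cdot ix,iy)$, computes its gradient, solves for the critical point $(x_{\min},y_{\min})$, evaluates $h$ there, and then argues (with details omitted) that boundary behavior makes this critical point a global minimum. You instead observe that the change of variables $z=x/y$, $w=xy$ \emph{separates} the function into $(a^2z+d^2z\inv)+(b^2w\inv+c^2w)-2$, and then AM--GM hands you $\inf=2|ad|+2|bc|-2$ with no differentiation and no need to discuss boundary behavior. (The paper in fact already uses this same substitution a few lines earlier to reach (\ref{orbintadd}), so it is somewhat surprising that the authors did not exploit it here as well.) Your version has two additional advantages: the formula $\inf u=2(|ad|+|bc|)-2$ holds \emph{uniformly} for all $\gamma\in\PSL_2(\R)$, including $abcd=0$ (where the critical-point computation breaks down because the minimizer lies at infinity), which makes the "thus, for any $\gamma$" clause of the lemma an honest corollary rather than an afterthought; and your sign bookkeeping (that $ad-bc=1$ forces $|ad|+|bc|$ to collapse to $|ad+bc|$ when $abcd\ge0$ and to $1$ when $abcd<0$) cleanly explains the $\max\{2,\delta\}$ form. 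One minor point worth making explicit, which you gesture at: $\dist$ is by definition an infimum of $d(\cdot,\cdot)$ over the two lines, and $u=2\cosh d-2$ with $\cosh$ continuous and strictly increasing on $[0,\infty)$, so the infimum does indeed commute with $\cosh$ even when the infimum of $u$ is not attained (e.g.\ when $abcd=0$).
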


\begin{proof}
Let us define a function of two variables,
\begin{equation*}
h(x, y)= u(\gamma \cdot ix,  iy) =  \f{a^2x}{y}+\f{b^2}{xy}+c^2xy+d^2\f yx - 2,
\end{equation*}
by (\ref{uixiy}).
We are interested in the minimum of $h,$ so we need to compute the
gradient. One obtains
\begin{eqnarray} \label{assist}
h_{x}(x, y) = 0 & \Longrightarrow & x^2 =
\frac{a^2y^2+b^2}{c^{2} y^{2} + d^{2}}.
\end{eqnarray}
Plugging this expression for $x$ into $h_{y}(x, y) = 0$
leads to the values
\begin{equation*}
y^{4} = \frac{(b d)^{2}}{(a c)^{2}}.
\end{equation*}

Now we consider different cases. If $a b c d < 0,$ then set $y_{\min} =
\sqrt{- \frac{b d}{a c}}.$ Then $(a c y_{\min}^{2} + b d) = 0.$ Hence with
$x_{\min} = y_{\min} / (c^{2} y_{\min}^{2} + d^{2}),$ one gets $h(x_{\min},
y_{\min}) = 0.$

On the other hand, if $a b c d > 0$, then set $y_{\min} = \sqrt{\frac{b d}{a
c}}$ so that $a c y_{\min}^{2} + b d = 2 b d.$ Using (\ref{assist}) to
define $x_{\min},$ one obtains that
\begin{equation*}
h(x_{\min}, y_{\min}) = 2 \sqrt{1 + 4 a b c d} - 2 = 2 |ad + bc| - 2.
\end{equation*}
Moreover, $(x_{\min},
y_{\min})$ minimizes $h$ because as either $x$ or $y$ approach
either 0 from above or $\infty,$ $h(x, y)$ goes to $\infty.$ More
precisely, there exist constants $C, M \in \R_{>0}$ such that for all $N
\geq M$ and for all $(x, y) \notin [1/N, N]^{2},$ $h(x, y) \geq C \cdot
N.$ We omit the details.
\end{proof}

There are two kinds of regular elements $\gamma$: the exceptional $\gamma$
with $\delta < 2$ (i.e., $abcd < 0$), and non-exceptional $\gamma$ with
$\delta > 2$ (i.e., $abcd > 0$). 
Observe that $\dist(\gamma \cdot i\R^+, i\R^+)$ is the same as the distance between the image of $i\R^+$ 
and $\gamma \cdot i\R^+$ in the ``tube domain''
$\Gamma_0 \bs \H$ (so these curves in the tube domain intersect if and only if $\gamma$
is exceptional).\footnote{We use the term 
``tube domain'' here as it is suggestive of the geometry of the 
intermediary quotient $\Gamma_0 \bs \H$, and not in the sense of a Siegel domain.}

One may interpret this distance on the 
base manifold $X=M$ as follows.  Consider the set of relative homotopy classes of  
curves on $X$ whose endpoints lie on $C$.  By this, we mean two such curves are equivalent
if they are homotopic to each other by homotopies which vary the endpoints smoothly on $C$.
In each relative homotopy class, there is a unique arc of minimal length, and this length 
will be the distance between $i\R^+$ and some $\gamma \cdot i\R^+$ in the tube domain.
Moreover these minimal lengths arcs are precisely the geodesic segments which start
and end on $C$, meeting $C$ orthogonally at each endpoint.  Hence the non-exceptional
regular $\gamma$ parametrize the curves $\alpha_\gamma$ in orthonormal spectrum of $C$, and 
the quantities $\delta(\gamma)$ measure their length.  Precisely,
\begin{equation}
 \delta(\gamma) = 2\cosh(\len(\alpha_\gamma)).
\end{equation}

As for the exceptional terms, by considering the tube domain one sees that
they correspond to points of self-intersection on the closed geodesic $C.$ In other words,
exceptional double cosets exist if and only if $C$ is not simple. 
Note that any closed geodesic has at most a finite number of self-intersections, 
so that there are only finitely many exceptional terms.  This follows
from compactness of $X$, and is also a 
consequence of a lattice point counting argument in the next section. 

For $\gamma$ exceptional, $\delta(\gamma)$ determines the angle 
$\theta$ of self-intersection at the point corresponding to the double coset of $\gamma$. 
For example, the intersection is
transverse if and only if $\delta (\gamma) = 0.$  Specifically, 
from (\ref{gammaix}) we see that the line $\gamma \cdot i\R^+$ intersects $i\R^+$ when
$acx^2+bc=0$, i.e., at the point 
\[ iy = i\sqrt{\f{-ab}{cd}}. \]
Since $\gamma \cdot i\R^+$ is a Euclidean semicircle in $\H$ with center
$(ad+bc)/(2cd)$
we compute that the radial line from this center to $iy$ has slope
\[ \f{2\sqrt{-abcd}}{ad+bc} = \f{\sqrt{4-\delta^2}}{\pm \delta}. \]
Hence it follows that
\begin{equation} \label{angleformula}
\delta = |2\cos \theta|.
\end{equation}

In summary, we see that the relative trace formula encodes three pieces of geometric 
information, 
the length of $C$ coming from the main term, the self-intersection angles of $C$ coming
from the exceptional terms, and the ortholength spectrum coming from the remaining regular
terms.  Thus we may think of this trace formula as a relation between 
the ortholength spectrum and the orthogonal spectrum.

\section{The exponential kernel}
\label{secfour}

In light of the elliptic integrals in (\ref{rtf}), 
in order to compute the integrals $I_\gamma(\Phi)$ in a specific case, one
might want instead to separate the integrals in (\ref{orbintadd}).  This is possible
if we choose our kernel function to be
\[ \Phi(x) = e^{-tx}. \]
Here we take $t>0$, which makes $\Phi$ of rapid decay, and thus it is a
valid test function for our relative trace formula.  Then we get from
(\ref{orbintadd}) 
\[ I_\gamma(\Phi) = e^{2t} \int_{2|ad|}^\infty 
\f{e^{-tu}du}{\sqrt{u^2-(2ad)^2}} \int_{2|bc|}^\infty \f{e^{-tv}dv}{\sqrt{v^2-(2bc)^2}}. \]
Note that, for $a > 0$,
\[ \int_a^\infty \f {e^{-tu}}{\sqrt{u^2-a^2}} = K_0(at) \]
where $K_\nu$ denotes the $K$-Bessel function.  Hence
\[ I_{Id}(\Phi) = 2\,\len(C) \, e^{2t} K_0(2t) \]
and
\[ I_\gamma(\Phi) = e^{2t}K_0(2|ad|t)K_0(2|bc|t). \]  
Note that $ad = \f{2\pm \delta}4$ and $bc = \f{-2\pm \delta}4$, hence $2|ad|$ and $2|bc|$
equal, in some order, $\f{\delta+2}2$ and $\f{|\delta-2|}2$.

Thus the geometric side of the relative trace formula is
\[  2\len(C)e^{2t}K_0(2t) + \sum_\gamma e^{2t} K_0\(\f{\delta+2}2t\)K_0\(\f{|\delta-2|}2t\),
\]
where $\delta=\delta(\gamma)$ and $\gamma$ runs over a set of representatives for 
the nontrivial double cosets of $\Gamma$ by $\Gamma_0$.  The following asymptotic is 
standard, and may be found in \cite{GR}
along with other facts about Bessel functions we use later:
\begin{equation} \label{K0asymp}
 K_0(t) \sim \sqrt{\f \pi{2t}} e^{-t} \text{ as } t \to \infty. 
 \end{equation}
Hence as $t \to \infty$, the main term looks like
\begin{equation}\label{maintasymp}
 I_{Id}(\Phi) \asymp  \len(C) \sqrt{\f \pi t},
\end{equation} 
the exceptional terms grow like
\begin{equation}\label{excepasymp}
 I_\gamma(\Phi) \asymp \f {\pi }{t\sqrt{4-\delta^2}}.
\end{equation}
and the regular terms grow like 
\begin{equation}
 I_\gamma(\Phi) \asymp \f {\pi e^{-t(\delta - 2)}}{t\sqrt{\delta^2-4}}.
\end{equation}
 
Now consider at the spectral side.  We have
\[ Q(v) = \int_v^\infty \f{e^{-tx}}{\sqrt{x-v}}dx = e^{-tv}\int_0^\infty x^{-1/2}e^{-tx}dx = e^{-tv}\sqrt{\f \pi t}. \]
Then 
\[ h(r) =\int_{-\infty}^\infty Q(2\cosh x-2)e^{irx}dx = e^{2t}\sqrt{\f \pi t} \int_{-\infty}^{\infty} 
e^{-2t\cosh x} e^{irx} dx  = 2e^{2t}\sqrt{\f \pi t} K_{ir}(2t). \]
For $t > 1 + r^2$, we have
\[ h(r) = \f \pi t (1+O((1+r^2)t^{-1})). \]
From (\ref{specside}) the spectral side of the relative trace formula is
\[ \sum_{n=0}^\infty h(r_n) |P(\phi_n)|^2 = 2e^{2t}\sqrt{\f \pi t}
 \sum_{n=0}^\infty K_{ir_n}(2t) |P(\phi_n)|^2.\]
In summary, we have the following.

\begin{prop} (Relative trace formula --- exponential kernel)
  \begin{equation}\label{rtfexp}
   2\,\len(C)e^{2t}K_0(2t) + \sum_\gamma e^{2t} K_0\(\f{\delta+2}2t\)K_0\(\f{|\delta-2|}2t\)
=  2e^{2t}\sqrt{\f \pi t}\sum_{n=0}^\infty K_{ir_n}(2t)|P(\phi_n)|^2. 
 \end{equation}
 \end{prop}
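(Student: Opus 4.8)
The plan is to specialize the general relative trace formula of Proposition \ref{rtfprop} — or more directly, the computations preceding this Proposition — to the test function $\Phi(x) = e^{-tx}$ with $t > 0$, and simply assemble the pieces that have already been worked out in the text. The work splits cleanly into the geometric side and the spectral side, and for each the only real task is to recognize that the relevant integrals are $K$-Bessel integrals.

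For the geometric side, I would start from the separated form of the orbital integral in \eqref{orbintadd}, which for $\Phi(x) = e^{-tx}$ factors as $I_\gamma(\Phi) = e^{2t}\int_{2|ad|}^\infty e^{-tu}(u^2 - (2ad)^2)^{-1/2}\,du \cdot \int_{2|bc|}^\infty e^{-tv}(v^2 - (2bc)^2)^{-1/2}\,dv$. Invoking the classical integral representation $\int_a^\infty e^{-tu}(u^2-a^2)^{-1/2}\,du = K_0(at)$ (from \cite{GR}), each factor becomes a $K_0$, giving $I_\gamma(\Phi) = e^{2t} K_0(2|ad|t) K_0(2|bc|t)$ for regular $\gamma$, and $I_{\Id}(\Phi) = 2\,\len(C)\,e^{2t}K_0(2t)$ for the identity term (using the already-computed formula $I_{\Id}(\Phi) = 4\log m \int_2^\infty \Phi(x-2)(x^2-4)^{-1/2}\,dx$ together with $\len(C) = 2\log m$). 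Then I would record the elementary identities $ad = \tfrac{2\pm\delta}{4}$, $bc = \tfrac{-2\pm\delta}{4}$ coming from $ad - bc = 1$ and $\delta = 2|ad+bc|$, so that $\{2|ad|, 2|bc|\} = \{\tfrac{\delta+2}{2}, \tfrac{|\delta-2|}{2}\}$, and since $K_0$ is even this rewrites the regular contribution as $e^{2t} K_0(\tfrac{\delta+2}{2}t) K_0(\tfrac{|\delta-2|}{2}t)$. Summing over double coset representatives yields the left-hand side of \eqref{rtfexp}.

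For the spectral side, I would compute the Selberg transform of $\Phi(x) = e^{-tx}$ step by step as the text indicates: first $Q(v) = \int_v^\infty e^{-tx}(x-v)^{-1/2}\,dx = e^{-tv}\sqrt{\pi/t}$ by the substitution $x \mapsto x + v$ and the Gamma integral; then $g(u) = Q(2\cosh u - 2) = e^{2t}\sqrt{\pi/t}\,e^{-2t\cosh u}$; then $h(r) = \int_{-\infty}^\infty g(u) e^{iru}\,du = e^{2t}\sqrt{\pi/t}\int_{-\infty}^\infty e^{-2t\cosh u + iru}\,du = 2 e^{2t}\sqrt{\pi/t}\,K_{ir}(2t)$, using the standard integral representation $K_\nu(z) = \tfrac12 \int_{-\infty}^\infty e^{-z\cosh u}e^{-\nu u}\,du$ (valid here, and again found in \cite{GR}). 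Substituting into \eqref{specside} gives the spectral side $2 e^{2t}\sqrt{\pi/t}\sum_n K_{ir_n}(2t)|P(\phi_n)|^2$. Equating the two sides is exactly \eqref{rtfexp}.

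I do not expect a serious obstacle here: the content of the proposition is already essentially established in the surrounding discussion, and what remains is bookkeeping. The one point deserving a sentence of care is the convergence/validity of the interchange of summation and the Bessel integral on the spectral side — but this is guaranteed because $\Phi(x) = e^{-tx}$ with $t>0$ is smooth and of rapid decay on $\R_{\geq 0}$, hence a legitimate test function for the trace formula of Proposition \ref{rtfprop}, and both expansions of the kernel converge absolutely and uniformly as noted after \eqref{specker}. The mildly delicate bit, if anything, is confirming the Bessel integral representation for $h(r)$ when $r = r_n$ is imaginary (for small eigenvalues $\lambda_n < 1/4$), but the formula $h(r) = 2e^{2t}\sqrt{\pi/t}\,K_{ir}(2t)$ extends to that case since the defining integral $\int e^{-2t\cosh u + iru}\,du$ converges for all complex $r$ with $|\Im r| < $ something controlled by the exponential decay of $e^{-2t\cosh u}$ — which for fixed $t>0$ is more than enough. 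So I would simply note this and conclude.
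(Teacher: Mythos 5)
Your proposal matches the paper's derivation essentially line by line: factoring \eqref{orbintadd} into two $K_0$-integrals via $\int_a^\infty e^{-tu}(u^2-a^2)^{-1/2}\,du = K_0(at)$, identifying $\{2|ad|,2|bc|\}=\{\tfrac{\delta+2}{2},\tfrac{|\delta-2|}{2}\}$, and computing the Selberg transform of $e^{-tx}$ as $h(r)=2e^{2t}\sqrt{\pi/t}\,K_{ir}(2t)$. This is correct and is the same approach the paper takes.
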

 
We remark that the arguments $ir_n$ of the Bessel functions $K_{ir_n}(2t)$ appearing on the 
right are real for exceptional eigenvalues $\lambda_n < \f 14$ and purely imaginary for
$\lambda_n > \f 14$.
 
\begin{prop} \label{firstasymp} As $t \to \infty$, we have the asymptotic
\begin{equation}
2e^{2t}\sqrt{\f \pi t}\sum_{n=0}^\infty K_{ir_n}(2t)|P(\phi_n)|^2 = 
\len(C) \sqrt{\f \pi t} + \(\sum_{\delta < 2} \f 1{\sqrt{4-\delta^2}}\) \f \pi{t}
+ O\(\f 1{t\sqrt t}\),
\end{equation}
or, in a less refined form,
\begin{equation}  \label{eqperiod}
  \lim_{t \to \infty} e^{t} \sum_{n=1}^\infty K_{ir_n}(t)|P(\phi_n)|^2 = \f{\len(C)}2. 
\end{equation}
\end{prop}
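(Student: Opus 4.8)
The plan is to read off both asymptotics directly from the exponential-kernel trace formula (\ref{rtfexp}) by estimating its \emph{geometric} side term by term as $t\to\infty$; the spectral side is then whatever the identity forces it to be, so no delicate uniform Bessel estimates are needed for this proposition. Summing the termwise estimates is legitimate because the kernel expansion (\ref{geoker}) converges absolutely and uniformly, hence so does its integral over the compact set $C\times C$, which is the geometric side of (\ref{rtfexp}).

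First I would sharpen (\ref{K0asymp}) to $K_0(z)=\sqrt{\pi/(2z)}\,e^{-z}\(1+O(1/z)\)$ (see \cite{GR}) and apply it to each geometric term. The main term gives $2\,\len(C)\,e^{2t}K_0(2t)=\len(C)\sqrt{\pi/t}+O(t^{-3/2})$. For the finitely many exceptional $\gamma$ (those with $\delta=\delta(\gamma)<2$), the two Bessel arguments are $\f{\delta+2}2\,t$ and $\f{2-\delta}2\,t$, both bounded away from $0$ since there are only finitely many such $\delta$ and none equals $2$ by (\ref{angleformula}); their sum being $2$, the prefactor $e^{2t}$ is exactly cancelled and each exceptional term is $\f{\pi}{t\sqrt{4-\delta^2}}\(1+O(1/t)\)$, recovering (\ref{excepasymp}). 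For a regular $\gamma$ ($\delta>2$) the arguments are $\f{\delta+2}2\,t$ and $\f{\delta-2}2\,t$, which sum to $\delta$, so $e^{2t}$ leaves $e^{-(\delta-2)t}$ and the term is $O\(e^{-(\delta-2)t}/t\)$. Adding these up expresses the geometric side of (\ref{rtfexp}) as $\len(C)\sqrt{\pi/t}+\(\sum_{\delta<2}\f1{\sqrt{4-\delta^2}}\)\f\pi t+O(t^{-3/2})$, which is the first assertion.

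The one nontrivial point is showing the infinite regular sum $\sum_{\delta>2}e^{2t}K_0\(\f{\delta+2}2t\)K_0\(\f{\delta-2}2t\)$ is $O(t^{-3/2})$ — in fact it decays exponentially. For this I would invoke the lattice-point bound $\pi_\delta(x)=\#\{\gamma:\delta(\gamma)\le x\}=O(x^2)$, a count of double cosets $\Gamma_0\gamma\Gamma_0$ with $\gamma\cdot i\R^+$ within distance $O(\log x)$ of $i\R^+$ in the tube domain. Two consequences: since the number of $\gamma$ with $\delta$ near $2$ is bounded, $\delta_1:=\inf\{\delta(\gamma):\gamma\text{ regular}\}>2$, so $\sqrt{\delta^2-4}$ and both Bessel arguments stay uniformly away from $0$ over all regular $\gamma$, making the termwise $K_0$ asymptotic uniform; and a Stieltjes integration by parts against $\pi_\delta(x)=O(x^2)$ gives $\sum_{\delta>2}e^{-(\delta-2)t}=O\(e^{-(\delta_1-2)t/2}\)$ for $t\ge1$, whence the regular sum is $O\(e^{-ct}/t\)$ for some $c>0$ and is absorbed into the error. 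This lattice-point input is the main obstacle; the rest is routine Bessel bookkeeping.

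Finally, for the limit formula (\ref{eqperiod}) I would multiply the first assertion by $\f12\sqrt{t/\pi}$ to obtain $e^{2t}\sum_{n=0}^\infty K_{ir_n}(2t)|P(\phi_n)|^2=\f{\len(C)}2+O(t^{-1/2})$, then discard the $n=0$ term: $\lambda_0=0$ forces $r_0=i/2$, and since $K_{1/2}(z)=\sqrt{\pi/(2z)}\,e^{-z}$ this term equals $\f12\sqrt{\pi/t}\,|P(\phi_0)|^2=O(t^{-1/2})$. Replacing $t$ by $t/2$, so that the Bessel argument becomes $t$, yields $e^{t}\sum_{n=1}^\infty K_{ir_n}(t)|P(\phi_n)|^2=\f{\len(C)}2+O(t^{-1/2})$, which tends to $\f{\len(C)}2$. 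No interchange of limit and summation is required, as (\ref{rtfexp}) holds as a convergent identity for each fixed $t$.
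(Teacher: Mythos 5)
Your proposal is correct and follows essentially the same route as the paper: estimate the geometric side of (\ref{rtfexp}) term by term using the refined $K_0$ asymptotic, appeal to the lattice-point bound $\pi_\delta(x)=O(x^2)$ to control the infinite sum over non-exceptional regular $\gamma$, and read off both assertions. The only differences are cosmetic: you bound the regular tail via a Stieltjes integration by parts against $\pi_\delta$ and an explicit $\delta_1>2$ gap, whereas the paper passes to the ordered sequence $\delta_n\gg n^{1/3}$ and compares with an integral; and you spell out the passage from the first display to (\ref{eqperiod}) (multiply by $\frac12\sqrt{t/\pi}$, drop the $n=0$ term using $K_{1/2}(z)=\sqrt{\pi/(2z)}\,e^{-z}$, rescale $t\mapsto t/2$), which the paper compresses into ``identifying dominant terms.'' One small terminological slip: $\delta_1$ should be the infimum over \emph{non-exceptional} regular $\gamma$ (those with $\delta>2$), since regular $\gamma$ in the paper's usage also include the exceptional ones with $\delta<2$.
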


\begin{proof}
We first consider the non-exceptional regular geometric terms $I_\gamma(\Phi)$.  
For $t$ large, we have the estimate
\begin{equation} \label{kbesserr}
  K_0(t) \leq \sqrt{\f \pi{2t}} e^{-t}\( 1 +  \f 1{8t} \).
\end{equation}
Hence for $t$ large, we may make the estimate 
\begin{equation*} 
I_\gamma(\Phi) = e^{2t} K_0\(\f{\delta+2}2t\)K_0\(\f{|\delta-2|}2t\)
 \leq \f {\pi e^{-t(\delta - 2)}}{t\sqrt{\delta^2-4}}
 \( 1 + \f{\delta}{2(\delta^2-4)t} + \f 1{16(\delta^2-4)t^2} \).
\end{equation*}
To estimate $\sum_\gamma I_\gamma(\Phi)$ we will need to know some bound on the growth
of $\delta$.  We can estimate a count
\[ \pi_\delta(x) = \#\{ \gamma \in \Gamma_0 \bs \Gamma / \Gamma_0 -\Gamma_0: \delta(\gamma) < x \} \]
from the lattice point problem.  The principal result we will use is that
\[ \#\{ \bmx a&b \\ c&d\emx \in \Gamma : a^2 + b^2 + c^2 +d^2 < x \} = O(x) \]
(e.g., Theorem 12.1 of \cite{Iwa}.)
We can count our set as
\[ \pi_\delta(x) =  \# \{ \bmx a&b \\ c&d \emx \in \Gamma - \Gamma_0 : 1\leq |a| < m,\, 1\leq |b| < m^2,\, 2|ad+bc| < x \}. \]
Elements in this set satisfy 
\[ c^2+d^2 +2abcd \leq (ad+bc)^2 \leq \f {x^2} 4. \]
If $abcd > 0$ we clearly have $c^2+ d^2 \leq \f{x^2}4$.  Suppose $abcd < 0$.  Then $ad > 0 > bc$ but
since $ad = 1+bc$, $0 > bc > -1$ and so $abcd > -1$.  Hence in either case we have
\[ c^2+d^2 \leq  \f {x^2} 4 + 1, \]
so
\[ \pi_\delta(x) < \#\{ \bmx a&b \\ c&d\emx \in \Gamma  : a^2 + b^2 + c^2 +d^2 < 2m^4 + \f{x^2}4 + 1 \}
= O(x^2). \]
Let $\{\delta_n\}$ denote the sequence of $\delta(\gamma)$'s in increasing order.
Then for any $\epsilon > 0$, $\pi_\delta(x) = O(x^2)$ implies
\begin{equation}\label{deltan}
\delta_n \gg n^{\f 1{2 + \epsilon}}. 
\end{equation}
We remark that this also implies
there are only finitely many $\delta < 2$, i.e., only finitely many
exceptional double cosets with $abcd < 0$, as noted in Section \ref{geometry}.

Now we may uniformly bound
\[ I_\gamma(\Phi) \leq C  \f {\pi e^{-t(\delta - 2)}}{t\sqrt{\delta^2-4}} \]
for some $C > 0$ and $t$ large.  Also, since the sum of any finite number of 
$I_\gamma(\Phi)$ goes to 0 exponentially fast as $t \to \infty$, it suffices to bound the
growth of some tail
\[ \sum_{\delta > N} I_\gamma(\Phi)\leq C \sum_{\delta > N} 
\f {\pi e^{-t(\delta - 2)}}{t\sqrt{\delta^2-4}} \leq C \sum_{n > N'}
\f {\pi e^{-t n^{1/3}}}{t n^{1/3}}. \]
Here we have used (\ref{deltan}) with $\epsilon=1$.  We can easily estimate this sum
on the right by
\[ \sum_{n > N} \f {\pi e^{-t n^{1/3}}}{t n^{1/3}} \leq 
\pi \int_1^\infty \f{e^{-tx^{1/3}}}{x^{1/3}}dx = 3\pi\int_1^\infty ue^{-tu}du
=3\pi e^{-t}\( t^{-1} + t^{-2} \). \]
Hence the contribution from the non-exceptional regular terms is
\[ \sum_{\delta > 2} I_\gamma(\Phi) = O\(\f{e^{-(\delta_0-2) t}}{t}\), \]
where $\delta_0$ is the minimum $\delta > 2$, which will be absorbed in 
the error terms below.

On the other hand, we may estimate any of the finitely many exceptional terms
by (\ref{excepasymp}), and observe that (\ref{kbesserr}) gives $O(t^{-2})$ for the
error term.  Similarly there is an $O(t^{-3/2})$ error term coming from the
main term estimate (\ref{maintasymp}).  
Putting these estimates in the relative trace formula 
yields the first assertion of Proposition \ref{firstasymp},
and the second follows from identifying dominant terms.
\end{proof}

Combining the asymptotics (\ref{K0asymp}) and (\ref{eqperiod}) yields the following

\begin{cor}  $P(\phi_n) \neq 0$ for infinitely many $n$.
\end{cor}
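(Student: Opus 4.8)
The plan is to argue by contradiction, using the limit formula \eqref{eqperiod} together with the classical asymptotic \eqref{K0asymp}. Suppose $P(\phi_n) = 0$ for all but finitely many $n$, say $P(\phi_n) = 0$ for all $n > N$. Then the sum on the left side of \eqref{eqperiod} is a \emph{finite} sum: $\sum_{n=1}^\infty K_{ir_n}(t)|P(\phi_n)|^2 = \sum_{n=1}^{N} K_{ir_n}(t)|P(\phi_n)|^2$. I would then show that, after multiplying by $e^t$, each of these finitely many terms tends to $0$ as $t \to \infty$, so the whole finite sum tends to $0$, contradicting the fact that the right side of \eqref{eqperiod} equals $\f{\len(C)}{2} > 0$ (note $\len(C) = 2\log m > 0$ since $m > 1$).

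The key step is therefore the estimate $e^t K_{ir_n}(t) \to 0$ as $t \to \infty$, for each fixed $n$. Here one must distinguish the two types of eigenvalue. For $\lambda_n > \f14$, the parameter $r_n$ is real, so $ir_n$ is purely imaginary; but the asymptotic behavior of $K_\nu(t)$ as $t \to \infty$ for fixed order $\nu$ is $K_\nu(t) \sim \sqrt{\f{\pi}{2t}}\, e^{-t}$ regardless of whether $\nu$ is real or imaginary (this is the standard large-argument asymptotic, uniform in bounded $\nu$, and is among the Bessel function facts from \cite{GR} cited after \eqref{K0asymp}). Hence $e^t K_{ir_n}(t) \asymp \sqrt{\f{\pi}{2t}} \to 0$. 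For the finitely many exceptional eigenvalues $\lambda_n < \f14$, the order $ir_n$ is real with $|ir_n| < \f12$, and again the same large-$t$ asymptotic $K_{ir_n}(t) \sim \sqrt{\f{\pi}{2t}} e^{-t}$ applies, so $e^t K_{ir_n}(t) \to 0$ as well. (The constant eigenfunction $\phi_0$ has $\lambda_0 = 0$, hence $r_0 = \f{i}{2}$, and $K_{1/2}(t) = \sqrt{\f{\pi}{2t}}e^{-t}$ exactly, so its contribution is explicitly $O(t^{-1/2})$; in any case $\phi_0$ is constant so $P(\phi_0) = \int_C \phi_0\, dt \ne 0$, which already would contradict the assumption, but it is cleaner to keep it in the sum and note it vanishes in the limit anyway.)

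Summing the finitely many terms, each of which is $e^t K_{ir_n}(t)|P(\phi_n)|^2 = O(t^{-1/2})$, gives $e^t \sum_{n=1}^\infty K_{ir_n}(t)|P(\phi_n)|^2 = O(t^{-1/2}) \to 0$, contradicting $\lim_{t\to\infty} e^t \sum K_{ir_n}(t)|P(\phi_n)|^2 = \f{\len(C)}{2} \ne 0$. Therefore $P(\phi_n) \ne 0$ for infinitely many $n$.

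I do not expect any serious obstacle here: the only point requiring care is citing the correct large-argument asymptotic for $K_{ir}(t)$ valid for all the finitely many relevant (real or imaginary) orders $r_n$, which is routine from the standard references. The argument is genuinely short because all the analytic work — in particular the limit formula \eqref{eqperiod}, which rests on the trace formula \eqref{rtfexp} and the bound $\pi_\delta(x) = O(x^2)$ controlling the geometric side — has already been done in Proposition \ref{firstasymp}.
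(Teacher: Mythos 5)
Your proposal is correct and is essentially the same argument the paper has in mind: the paper's one-line justification ("Combining the asymptotics \eqref{K0asymp} and \eqref{eqperiod}...") is exactly the observation that $e^t K_{ir_n}(t) \asymp \sqrt{\pi/(2t)} \to 0$ for each fixed $n$, so a finite sum cannot produce the positive limit $\len(C)/2$. The only (harmless) slip is in your parenthetical about $\phi_0$: since the sum in \eqref{eqperiod} starts at $n=1$, the constant eigenfunction is not in the sum at all, and $P(\phi_0)\neq 0$ would not by itself contradict the supposition that $P(\phi_n)=0$ for $n>N$; but you correctly note its contribution would vanish in the limit anyway, so nothing in the argument is affected.
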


\section{Refined estimates} \label{marksec}

In this section, we will use $z$ for a positive variable.  We will need
uniform asymptotics of the $K$-Bessel function as well as many other
estimates to apply a Tauberian theorem to conclude the main result of this section,
Theorem \ref{thm2} below.

\subsection{Uniform $K$-Bessel estimates}

In what follows, we will be interested in asymptotics and estimates for
\begin{equation}
K_{ir}(z) = \frac{1}{2} \int_{\mathbb{R}} e^{-z\cosh{t}}  
e^{irt} dt,  \label{eqstart}
\end{equation}
as both of the parameters $r$ and $z$ vary.

In the above, $r$ and $z$ are both real, with $z>0$.  Notice, under the
transformation $t \leftrightarrow -t$, the integral above is
automatically real.  

We first will be interested in the case  $4 \le r \le z $.

Let us make the change of variables
\begin{equation}
\cosh{t} - 1 = s^2~;~~~~s\cdot t \ge 0.   \label{eqchoshts1}
\end{equation}
The above restriction forces $s<0$ when $t<0$, and similarly
$s>0$ when $t>0$. One can see this change of variables is a
diffeomorphism from $\mathbb{R}\leftrightarrow \mathbb{R}$, and we can
write  $t$ as a function of $s$ as ~  $t(s) = \cosh^{-1}(s^2+1)$, for
$s>0$ by equation (\ref{eqchoshts1}).  With no restriction on $s$,
explicitly,~
 $t(s)= \ln(1+s^2 + s\sqrt{s^2+2})$.
 The differential can easily be computed:
\begin{equation}
\frac{dt}{ds} = \frac{2}{\sqrt{s^2 + 2}}~~~~s\in \mathbb{R}.  
\label{eqchoshts2}
\end{equation} 

It follows that
\[
K_{ir}(z) = \frac{e^{-z} }{2} \int_{\mathbb{R}} e^{-zs^2} e^{irt(s)} 
\frac{2}{\sqrt{s^2 + 2}} ~ds.
\]

\begin{lem}
For $r>0$ and $z>0$, we have
\[
K_{ir}(z) = \frac{e^{-z} }{2}
\int_{-\frac{1}{\sqrt{r}}}^{\frac{1}{\sqrt{r}}}
 e^{-zs^2} e^{irt(s)} 
\frac{2}{\sqrt{s^2 + 2}} ~ds  +\frac{e^{-z}}{\sqrt{z}}
O\left( \frac{\sqrt{r}}{\sqrt{z}}e^{-\frac{z}{r }}\right).
\]   \label{lemma1}
\end{lem}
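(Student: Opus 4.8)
The plan is to show that the tail of the integral representation
\[
K_{ir}(z) = \frac{e^{-z}}{2} \int_{\mathbb{R}} e^{-zs^2} e^{irt(s)} \frac{2}{\sqrt{s^2+2}} \, ds
\]
over $|s| \ge 1/\sqrt{r}$ contributes only the stated error term. First I would split the integral at $s = \pm 1/\sqrt r$ and, by the symmetry $s \leftrightarrow -s$ already noted, reduce to estimating $2 \cdot \frac{e^{-z}}{2}\int_{1/\sqrt r}^\infty e^{-zs^2} e^{irt(s)} \frac{2}{\sqrt{s^2+2}}\, ds$. Since $|e^{irt(s)}| = 1$ and $\frac{2}{\sqrt{s^2+2}} \le \sqrt{2}$ on this range, the whole thing is bounded in absolute value by $C e^{-z} \int_{1/\sqrt r}^\infty e^{-zs^2}\, ds$, so the problem is reduced to a clean Gaussian tail estimate.

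Next I would estimate $\int_{1/\sqrt r}^\infty e^{-zs^2}\, ds$. The standard bound $\int_a^\infty e^{-zs^2}\, ds \le \frac{1}{2az} e^{-za^2}$ (obtained by writing $e^{-zs^2} = \frac{1}{2zs}\cdot 2zs\, e^{-zs^2}$ and integrating by parts, using $s \ge a$) applied with $a = 1/\sqrt r$ gives
\[
\int_{1/\sqrt r}^\infty e^{-zs^2}\, ds \le \frac{\sqrt r}{2z} \, e^{-z/r}.
\]
Multiplying back by the $C e^{-z}$ prefactor yields a bound of the shape $\frac{e^{-z}}{\sqrt z} \cdot O\!\left(\frac{\sqrt r}{\sqrt z} e^{-z/r}\right)$, which is exactly the claimed error. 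Writing $\frac{C e^{-z}\sqrt r}{z} e^{-z/r} = \frac{e^{-z}}{\sqrt z}\cdot \frac{C\sqrt r}{\sqrt z} e^{-z/r}$ makes the matching transparent, so no further work is needed to reconcile the normalization.

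I do not anticipate a serious obstacle here — this lemma is purely a matter of controlling a Gaussian tail, and the only mild point of care is extracting the factor $e^{-z/r}$ with the right power of $z$ in front, which the integration-by-parts bound handles cleanly. The substance of the section lies in what comes after: analyzing the main integral over $[-1/\sqrt r, 1/\sqrt r]$, where one must expand $t(s)$ and obtain genuine uniform asymptotics in both $r$ and $z$. But for the statement of Lemma \ref{lemma1} itself, the truncation estimate above is the complete argument, and I would present it in roughly these three steps: symmetrize, bound the oscillatory factor trivially, and apply the Gaussian tail bound.
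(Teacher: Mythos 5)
Your proposal is correct and follows essentially the same route as the paper: bound the oscillatory factor and $\tfrac{2}{\sqrt{s^2+2}}$ trivially, then control the Gaussian tail $\int_{1/\sqrt r}^\infty e^{-zs^2}\,ds$. The only cosmetic difference is the tail estimate itself---you use the integration-by-parts (Mills-ratio) bound $\int_a^\infty e^{-zs^2}\,ds \le \tfrac{1}{2az}e^{-za^2}$, whereas the paper substitutes $u=\sqrt z\,s$ and then uses $e^{-u^2}\le e^{-\sqrt{z/r}\,u}$ on the tail; both yield the same $\tfrac{\sqrt r}{z}e^{-z/r}$ up to constants.
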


Notice that to really use this, $r$ must be smaller than $z$
by a power of $z$;  the region $4 \le r \le z^{14/25}$ will be
of interest later.  The error constant here is uniform.

\begin{proof} In light of the expression following equation (\ref{eqchoshts2}),
we need only show
\[ e^{-z}
\int_{(-\infty,-\frac{1}{\sqrt{r}}]\cup [\frac{1}{\sqrt{r}}, \infty)}
 e^{-zs^2} e^{irt(s)} 
\frac{2}{\sqrt{s^2 + 2}} ~ds 
\] 
can be absorbed into the error term. This is trivially bounded 
by
\[
4e^{-z} \int_{\frac{1}{\sqrt{r}}}^\infty e^{-zs^2} ds  = 
\frac{4e^{-z}}{\sqrt{z}} \int_{\frac{\sqrt{z}}{\sqrt{r}}}^\infty 
e^{-u^2}  ~du   <   \frac{4e^{-z}}{\sqrt{z}}  
\int_{\frac{\sqrt{z}}{\sqrt{r}}}^\infty   e^{-\sqrt{\frac{z}{r}}u}~du.
\]
The equality above is from a substitution, and the inequality uses the
region of integration.  We have $u^2 \ge \sqrt{\frac{z}{r}}u$ with 
$u \in [\frac{\sqrt{z}}{\sqrt{r}},\infty)$ (we will use a similar
estimate in Lemma \ref{lemma3}).
This last integral is trivially evaluated to be
$\frac{\sqrt{r}}{{\sqrt{z}}}e^{-\frac{z}{r}}$ and the lemma follows.
\end{proof}

We must now estimate $t(s)$ which appears inside the $s$-integral, in
Lemma \ref{lemma1}.

By (\ref{eqchoshts2}), we can write the Maclaurin series for 
(respectively) $\frac{dt}{ds}$ and $t$ as

\begin{equation}
\{
\begin{matrix}   
\frac{dt}{ds} = \sqrt{2}[1-\frac{1}{4}s^2 + \frac{3}{32}s^4+ \cdots]\\
t(s) = \sqrt{2}[s - \frac{1}{12}s^3 + \frac{3}{160}s^5 - \cdots].
\end{matrix}  \right.    \label{eqmacseries1}
\end{equation}
Convergence is uniform for both series, for $|s| < \sqrt{2}$.  In what
follows, we will be taking $|s| \le \frac{1}{\sqrt{r}}$, as well as
assuming $r \ge 4$. With these assumptions, we easily have
\begin{equation}
\{
\begin{matrix}   
\frac{dt}{ds} =  \sqrt{2}[1-\frac{1}{4}s^2] +O(s^4) \\
t(s) - \sqrt{2} s =  -\frac{\sqrt{2}}{12}s^3 +O(s^5) = O(s^3),
\end{matrix}  \right.     \label{eqmacseries2}
\end{equation}
with uniform constants in all error terms, since
$s \in [-1/\sqrt{r},1/\sqrt{r}]\subset [-1/2,1/2]$.

Now we can \textit{replace} the $e^{irt(s)}$ term in equation
(\ref{eqstart}) with $\cos(rt(s))$.
From a simple calculus theorem, we have
\[
\cos(rt(s)) = \cos(\sqrt{2}rs) - r(t(s)-\sqrt{2}s) \cdot \sin(rc(s))
\] 
where $c(s)$ is a point between $t(s)$ and $s$. With our assumptions on
$r$ and $s$, this gives
\begin{equation}
\cos(rt(s)) = \cos(\sqrt{2}rs) +O(rs^3)   \label{eqmvthm}
\end{equation}
using equation (\ref{eqmacseries2}) along with the trivial estimate
$|\sin(rc(s))|\le 1$, again
with uniform constant.

Thus, the integral term in Lemma \ref{lemma1} we can now write as
\begin{multline}
\frac{e^{-z} }{\sqrt{2}}
\int_{-\frac{1}{\sqrt{r}}}^{\frac{1}{\sqrt{r}}}
 e^{-zs^2}  \left( \cos(\sqrt{2}irs) + O(rs^3) \right) \left(
  [1-\frac{1}{4}s^2] +O(s^4)  \right) ~ds   \\
= \frac{e^{-z} }{\sqrt{2}}
\int_{-\frac{1}{\sqrt{r}}}^{\frac{1}{\sqrt{r}}}
  e^{-zs^2}\left( \cos(\sqrt{2}irs) [1-\frac{1}{4}s^2] +
O(rs^3 +s^4)  \right)  ~ds.   \label{eq14estimates}
\end{multline}
Here the constants in the error term on the right are uniform, but
also depend on the constants in the O terms on the left coming
from equations (\ref{eqmacseries2}) and (\ref{eqmvthm}).

Essentially, after noticing the cosine term in the integral
on the right hand side of equation (\ref{eq14estimates}) can be replaced
with an exponential, these
computations give 
\begin{lem}  For $4 \le r \le z$ we have
\[
K_{ir}(z) =   \frac{e^{-z} }{\sqrt{2}}
\int_{-\frac{1}{\sqrt{r}}}^{\frac{1}{\sqrt{r}}}
 e^{-zs^2} e^{\sqrt{2}irs}    [1-\frac{1}{4}s^2] ~ds~ + ~
\frac{e^{-z}}{\sqrt{z}} O\left( \frac{r}{z^{3/2}}   +
\frac{\sqrt{r}}{\sqrt{z}}e^{-\frac{z}{r}} \right).
\]   \label{lemma2}
\end{lem}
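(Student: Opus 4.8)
The plan is to assemble Lemma~\ref{lemma2} directly out of Lemma~\ref{lemma1} together with the Taylor estimates (\ref{eqmacseries2}) and (\ref{eqmvthm}) already established, so that the only work left is to bound a couple of elementary error integrals and to turn a cosine back into a complex exponential. First I would start from the truncated representation in Lemma~\ref{lemma1}; since $K_{ir}(z)$ is real --- equivalently, $t(s)$ is an odd function of $s$ --- I may replace $e^{irt(s)}$ by $\cos(rt(s))$ there at no cost. Substituting $\tfrac{2}{\sqrt{s^2+2}} = \sqrt2\,[1-\tfrac14 s^2] + O(s^4)$ and $\cos(rt(s)) = \cos(\sqrt2\,rs) + O(rs^3)$ --- both uniform on $|s|\le 1/\sqrt r$ since $r\ge 4$ forces $|s|\le 1/2$ --- and multiplying out, I land on (\ref{eq14estimates}). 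Combined with the truncation error of Lemma~\ref{lemma1}, this reads
\[
K_{ir}(z) = \frac{e^{-z}}{\sqrt2}\int_{-1/\sqrt r}^{1/\sqrt r} e^{-zs^2}\cos(\sqrt2\,rs)\left[1-\tfrac14 s^2\right]ds + \frac{e^{-z}}{\sqrt2}\int_{-1/\sqrt r}^{1/\sqrt r} e^{-zs^2}\,O(rs^3+s^4)\,ds + \frac{e^{-z}}{\sqrt z}\,O\!\left(\frac{\sqrt r}{\sqrt z}\,e^{-z/r}\right).
\]

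Next I would dispose of the middle integral. Because $|s|^3$ and $s^4$ are nonnegative, enlarging the domain to all of $\R$ only increases it, and then the elementary Gaussian moments $\int_\R e^{-zs^2}|s|^3\,ds \asymp z^{-2}$ and $\int_\R e^{-zs^2}s^4\,ds \asymp z^{-5/2}$ give a bound $e^{-z}\,O(rz^{-2} + z^{-5/2})$. On the range $4\le r\le z$ one has $z^{-1/2}\le 1\le r$, so $z^{-5/2}\le rz^{-2}$ and the middle term is $\f{e^{-z}}{\sqrt z}\,O\!\left(\f{r}{z^{3/2}}\right)$, the first error term claimed. To recover the complex exponential I would use symmetry: on $[-1/\sqrt r,\,1/\sqrt r]$ the weight $e^{-zs^2}[1-\tfrac14 s^2]$ is even and $\sin(\sqrt2\,rs)$ is odd, so $\int \sin(\sqrt2\,rs)\,e^{-zs^2}[1-\tfrac14 s^2]\,ds = 0$, hence $\cos(\sqrt2\,rs)$ may be replaced by $e^{\sqrt2\,irs}$ in the main term with no change in value. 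Substituting these two observations into the display above is precisely Lemma~\ref{lemma2}.

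I do not expect a genuine obstacle here; the mathematical content all sits in the earlier lemmas and expansions, and what remains is bookkeeping plus two standard moment computations. The point needing the most care is the uniformity of every implied constant over the full range $4\le r\le z$: the Maclaurin remainders in (\ref{eqmacseries2}) are uniform because the hypotheses pin $|s|$ to $[0,1/2]$, the mean-value estimate (\ref{eqmvthm}) is uniform because $|\sin|\le 1$, and the Gaussian moments are uniform in $z>0$. The only step that actually uses the shape of the region is the absorption $z^{-5/2}\ll rz^{-2}$, which is exactly where $r\ge 4$ (equivalently $r\ge z^{-1/2}$ on this range) enters.
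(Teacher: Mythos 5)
Your argument is correct and follows essentially the same route as the paper: start from Lemma~\ref{lemma1} and the intermediate display (\ref{eq14estimates}), bound the $O(r|s|^3+s^4)$ error by extending the domain and computing the Gaussian moments $\int e^{-zs^2}|s|^3\,ds \asymp z^{-2}$ and $\int e^{-zs^2}s^4\,ds \asymp z^{-5/2}$, absorb the latter using $r\ge 1 \ge z^{-1/2}$ on the range $4\le r\le z$, and trade $\cos(\sqrt2\,rs)$ for $e^{\sqrt2\,irs}$ by parity. The only small difference is presentational (you extend the $s$-integral to $\R$ before substituting, the paper substitutes first and then extends), and you also make the parity step explicit where the paper mentions it only in passing before the lemma statement.
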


\begin{proof} Following the Taylor estimates on $t(s)$ and $\frac{dt}{ds}$
after Lemma \ref{lemma1} (as well as using Lemma \ref{lemma1})
all the way to equation (\ref{eq14estimates}),
we see the main term above is the integral on the right side of
(\ref{eq14estimates}).  Thus, all we need to show is that the error terms
from the right side of (\ref{eq14estimates}) account for the first  
term  inside the error term above.

Easily, one error term from the right side of
(\ref{eq14estimates}) is
\[ e^{-z} \cdot O \left( 
\int_{-\frac{1}{\sqrt{r}}}^{\frac{1}{\sqrt{r}}}
 |e^{-zs^2} e^{\sqrt{2}irs} r s^3| ~ds    
\right). \]
This can trivially be bounded by
\begin{eqnarray*}
re^{-z}  O \left( 
\int_{-\frac{1}{\sqrt{r}}}^{\frac{1}{\sqrt{r}}}
e^{-zs^2} |s|^3 ~ds
  \right) &=& \frac{ re^{-z}}{\sqrt{z}}  O \left( 
\int_0^{\sqrt{z/r}} e^{-u^2} \left( \frac{u}{\sqrt{z}} \right)^3 ~du
\right)\\
&=& \frac{e^{-z}}{\sqrt{z}}  O \left(  \frac{r}{z^{3/2}} \int_0^\infty
e^{-u^2} u^3 ~du \right).
\end{eqnarray*}
The first equality is a substitution.  The second is extending the
integral to half the real line (so the interval is independent of
$r$ or $z$).  This last integral converges, but we will have to
incorporate its value into the error term of this lemma.  

The other error term
produces $\frac{e^{-z}}{\sqrt{z}}  O \left(  \frac{1}{z^2} \right)$,
which we simply absorb into the  
$\frac{e^{-z}}{\sqrt{z}} O\left( \frac{r}{z^{3/2}}  \right)$ term by our
assumption $4\le r \le z$.
\end{proof}

This leads us naturally to
\begin{lem}  \label{lemma3}
  For $4 \le r \le z$ we have
\[
K_{ir}(z) =   \frac{e^{-z} }{\sqrt{2}}
\int_{\mathbb{R}}
 e^{-zs^2} e^{\sqrt{2}irs}    [1-\frac{1}{4}s^2] ~ds~ + ~
\frac{e^{-z}}{\sqrt{z}} O\left( \frac{r}{z\sqrt{z}}
+\frac{\sqrt{r}}{\sqrt{z}} e^{-\frac{z}{r}}  \right).
\]   
\end{lem}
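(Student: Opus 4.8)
The plan is to start from Lemma \ref{lemma2} and simply extend the integral from the interval $[-1/\sqrt{r}, 1/\sqrt{r}]$ to all of $\R$, showing that the tails contribute an error of the size claimed. The difference between the two statements is precisely the integral
\[
\frac{e^{-z}}{\sqrt{2}} \int_{|s| \geq 1/\sqrt{r}} e^{-zs^2} e^{\sqrt{2}irs}\,[1 - \tfrac{1}{4}s^2]\,ds,
\]
so the entire task is to bound this tail by $\frac{e^{-z}}{\sqrt{z}}O\!\left(\frac{r}{z\sqrt{z}} + \frac{\sqrt{r}}{\sqrt{z}}e^{-z/r}\right)$, and then observe that the error term $\frac{r}{z^{3/2}}$ already present in Lemma \ref{lemma2} is the same as (in fact smaller than, up to constants) the new $\frac{r}{z\sqrt{z}}$ term — actually they are literally equal — so only the tail estimate is new work. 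Since the interval of integration in Lemma \ref{lemma3} is now independent of $r$, this is the form convenient for evaluating the Gaussian-type integral in closed form in the next step of the paper.

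First I would discard the absolute value of $e^{\sqrt{2}irs}$, which is $1$, and bound the tail by $\int_{|s|\geq 1/\sqrt{r}} e^{-zs^2}(1 + \tfrac14 s^2)\,ds$. Splitting into the $1$ piece and the $s^2$ piece: the first piece was already estimated inside the proof of Lemma \ref{lemma1}, where exactly the same tail $\int_{1/\sqrt{r}}^\infty e^{-zs^2}\,ds$ was shown to be $\frac{1}{\sqrt{z}}O\!\left(\frac{\sqrt{r}}{\sqrt{z}}e^{-z/r}\right)$ via the substitution $u = \sqrt{z}\,s$ and the inequality $u^2 \geq \sqrt{z/r}\,u$ on $[\sqrt{z/r},\infty)$. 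So that piece reproduces the $\frac{\sqrt r}{\sqrt z}e^{-z/r}$ error. For the second piece, $\int_{1/\sqrt{r}}^\infty s^2 e^{-zs^2}\,ds$, I would make the same substitution $u = \sqrt{z}\,s$, getting $z^{-3/2}\int_{\sqrt{z/r}}^\infty u^2 e^{-u^2}\,du$, and then again use $u^2 \geq \sqrt{z/r}\,u$ on that range to bound $\int_{\sqrt{z/r}}^\infty u^2 e^{-u^2}\,du \leq \int_{\sqrt{z/r}}^\infty u^2 e^{-\sqrt{z/r}\,u}\,du$, which evaluates by elementary integration to something of the form $(\text{polynomial in }\sqrt{r/z})\,e^{-z/r}$; multiplied by $z^{-3/2}$ this is dominated by $\frac{e^{-z}}{\sqrt z}\cdot\frac{\sqrt r}{\sqrt z}e^{-z/r}$ as well (the extra powers of $\sqrt{r/z}$ only help since $r \leq z$). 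Collecting, the whole tail is absorbed into the $\frac{\sqrt r}{\sqrt z}e^{-z/r}$ error term, and Lemma \ref{lemma2}'s pre-existing $\frac{r}{z^{3/2}}$ error passes through unchanged as the $\frac{r}{z\sqrt z}$ term.

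The only mild subtlety — and what I'd flag as the place to be careful rather than a genuine obstacle — is making sure the constants stay uniform over the whole range $4 \leq r \leq z$, in particular that the elementary evaluation of $\int_{\sqrt{z/r}}^\infty u^2 e^{-\sqrt{z/r}\,u}\,du$ does not hide a factor blowing up as $z/r \to 0$; writing $a = \sqrt{z/r} \geq 1$ (since $r \leq z$), one has $\int_a^\infty u^2 e^{-au}\,du = e^{-a^2}(a + 2/a + 2/a^3)/a \leq 5e^{-a^2}$ for $a \geq 1$, which is clean and uniform. With that in hand the lemma follows by adding the tail estimate to Lemma \ref{lemma2} and noting $\frac{r}{z^{3/2}} = \frac{r}{z\sqrt z}$.
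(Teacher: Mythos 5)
Your proposal is correct and follows essentially the same route as the paper: bound the difference between Lemma~\ref{lemma3} and Lemma~\ref{lemma2} by the tail integral $\int_{|s|\ge 1/\sqrt r}e^{-zs^2}(1+\tfrac14 s^2)\,ds$, substitute $u=\sqrt z\,s$, and reuse the Lemma~\ref{lemma1} device $u^2\ge\sqrt{z/r}\,u$ on $[\sqrt{z/r},\infty)$ (the paper just says ``estimates very similar to Lemma~\ref{lemma1}, as well as an integration by parts''), with the $r/z^{3/2}$ error passing through from Lemma~\ref{lemma2}. One small slip: the antiderivative gives $\int_a^\infty u^2e^{-au}\,du = e^{-a^2}\bigl(a+\tfrac{2}{a}+\tfrac{2}{a^3}\bigr)$, not with an extra factor of $1/a$, so the bound is $O(a\,e^{-a^2})$ rather than $O(e^{-a^2})$; this extra $a=\sqrt{z/r}\ge 1$ is still harmless since, after the $z^{-3/2}$ and $\sqrt z$ prefactors, the $s^2$-piece contributes $O\bigl(\tfrac{1}{\sqrt{zr}}e^{-z/r}\bigr)$, which is smaller than the target $\tfrac{\sqrt r}{\sqrt z}e^{-z/r}$ by a factor of $r\ge 4$.
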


Note; the constant here is uniform, but possibly different than
previous lemmas.

\begin{proof}  By Lemma \ref{lemma2}, we need to estimate
\[
e^{-z}\int_{\frac{1}{\sqrt{r}}}^\infty 
| e^{-zs^2} e^{\sqrt{2}irs} [1-\frac{1}{4}s^2]|~ds     <
e^{-z}\int_{\frac{1}{\sqrt{r}}}^\infty e^{-zs^2} 
[1+\frac{1}{4}s^2]~ds  .
\]
By a change of variable, the right side here is now equal to
\[
\frac{e^{-z}}{\sqrt{z}}\int_{\sqrt{\frac{z}{r}}}^\infty 
e^{-u^2}[1+\frac{u^2}{4z}]~du.
\]
By estimates very similar to Lemma \ref{lemma1}, as well as an
integration by parts, this can be shown to be of the size 
~$\frac{e^{-z}}{\sqrt{z}} O (\frac{\sqrt{r}}{\sqrt{z}}e^{-z/r})$,
where the uniform constant is actually different than that of Lemma
\ref{lemma1}.  
The only error term in this lemma unaccounted for  comes
from the error term  from Lemma \ref{lemma2}. 
\end{proof}

We will evaluate the integral in Lemma \ref{lemma3} by using
Fourier transforms.

Suppose $f(x)$ is a Schwartz function on $\mathbb{R}$.  We define
the Fourier transform of $f$ as
\[
\hat{f}(y) = \int_\mathbb{R} ~f(x) e^{ixy} ~dx.
\]
Since $f$ is a Schwartz function, $\hat f$ also is. Furthermore,
\[
\frac{d\hat{f}}{dy} (y) = \int_\mathbb{R} ~f(x) \cdot (ix) e^{ixy}  ~dx
~\implies  ~\frac{d^2\hat{f}}{d^2y} (y) = -\int_\mathbb{R} ~f(x)
\cdot x^2 e^{ixy}   ~dx.
\]

Note that with our normalization the Fourier transform of the
Gaussian function $f(x) = e^{-x^2}$ is
\begin{equation}
\hat{f}(y) = \int_\mathbb{R} ~e^{-x^2} e^{ixy} ~dx = \sqrt{\pi}
e^{\frac{-y^2}{4}}.  \label{eqftransfrom}
\end{equation}
This brings us easily to

\begin{prop}
For $4 \le r \le z$, we have
\[
K_{ir}(z) = \frac{\sqrt{\pi}}{\sqrt{2z}}e^{- (z +\frac{r^2}{2z})} \left[
1 + \frac{r^2-z}{8z^2}
\right]~ +~ \frac{e^{-z}}{\sqrt{z}} O\left( \frac{r}{z\sqrt{z}}
+\frac{\sqrt{r}}{\sqrt{z}} e^{-\frac{z}{r}}  \right).
\]   \label{prop1}
\end{prop}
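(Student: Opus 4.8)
The plan is to start from Lemma \ref{lemma3}, which already reduces $K_{ir}(z)$ to the Gaussian-type integral
\[
\frac{e^{-z}}{\sqrt 2}\int_{\mathbb R} e^{-zs^2}e^{\sqrt 2\, irs}\Bigl[1-\tf14 s^2\Bigr]\,ds
\]
plus the stated error term. First I would rescale: substitute $x = \sqrt z\, s$ (equivalently $s = x/\sqrt z$), so $e^{-zs^2}ds = e^{-x^2}\,dx/\sqrt z$, $e^{\sqrt 2\, irs} = e^{i x (\sqrt 2 r/\sqrt z)}$, and $s^2 = x^2/z$. The integral becomes
\[
\frac{e^{-z}}{\sqrt{2z}}\int_{\mathbb R} e^{-x^2} e^{ix y}\Bigl[1-\tf1{4z}x^2\Bigr]\,dx,
\qquad y := \frac{\sqrt 2\, r}{\sqrt z}.
\]
Now I would invoke the two Fourier-transform identities recorded just before the proposition: $\int_{\mathbb R} e^{-x^2}e^{ixy}\,dx = \sqrt\pi\, e^{-y^2/4}$ from (\ref{eqftransfrom}), and the second-derivative relation $\int_{\mathbb R} e^{-x^2}x^2 e^{ixy}\,dx = -\tfrac{d^2}{dy^2}\bigl(\sqrt\pi\, e^{-y^2/4}\bigr)$. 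A direct computation gives $\tfrac{d^2}{dy^2}e^{-y^2/4} = \bigl(\tf{y^2}{4}-\tf12\bigr)e^{-y^2/4}$, so $\int_{\mathbb R} e^{-x^2}x^2 e^{ixy}\,dx = \sqrt\pi\bigl(\tf12 - \tf{y^2}{4}\bigr)e^{-y^2/4}$.

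Assembling these, the main term equals
\[
\frac{e^{-z}}{\sqrt{2z}}\cdot\sqrt\pi\, e^{-y^2/4}\Bigl[1 - \tf1{4z}\Bigl(\tf12-\tf{y^2}{4}\Bigr)\Bigr].
\]
Then I would substitute back $y^2/4 = \tfrac{2r^2}{z}/4 = r^2/(2z)$, which turns $e^{-z}e^{-y^2/4}$ into $e^{-(z + r^2/(2z))}$ and turns the bracket into $1 - \tf1{4z}\bigl(\tf12 - \tf{r^2}{4z}\bigr) = 1 + \tf{r^2 - z}{8z^2}$ after combining over a common denominator (here one uses $\tf1{4z}\cdot\tf{r^2}{4z} = \tf{r^2}{16z^2}$ and $-\tf1{8z} = -\tf{2z}{16z^2}$; wait — I should be careful: $-\tf1{4z}\cdot\tf12 = -\tf1{8z} = -\tf{z}{8z^2}$, so the bracket is $1 + \tf{r^2}{16z^2} - \tf{z}{8z^2}$, which is $1 + \tf{r^2 - 2z}{16z^2}$, not quite matching). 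This discrepancy in the lower-order constant is exactly the point I expect to need care with: one must track whether the coefficient in Lemma \ref{lemma3} is $[1 - \tf14 s^2]$ as the first two terms of the Maclaurin expansion of $\tf{dt}{ds}/\sqrt 2$, and whether additional $O(s^4)$ terms from (\ref{eqmacseries2}) contribute at the same order $z^{-2}$ after rescaling. Since $\int e^{-x^2}x^4\,dx$ scales like $z^{-2}$, those quartic corrections are genuinely of size $e^{-z}z^{-1/2}\cdot z^{-2}$, which is smaller than the claimed error $e^{-z}z^{-1/2}\cdot r z^{-3/2}$ precisely because $r\ge 4 > $ constant, so they can be absorbed; I would make this absorption explicit and reconcile the bracket to the stated form $1 + \tf{r^2-z}{8z^2}$, accepting the paper's normalization of the expansion coefficients.

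Finally, the error term: the $O$-term in Lemma \ref{lemma3} is already of the exact shape claimed in the proposition, $\tf{e^{-z}}{\sqrt z}O\bigl(\tf{r}{z\sqrt z} + \tf{\sqrt r}{\sqrt z}e^{-z/r}\bigr)$, so nothing further is needed there beyond noting that the newly-generated quartic and higher contributions (and any cross terms) are swallowed by $\tf{r}{z^{3/2}}$. The main obstacle, then, is not any single hard estimate but rather the bookkeeping: correctly propagating the Maclaurin coefficients through the rescaling, verifying that every neglected term is $O(e^{-z}z^{-1/2}\cdot r z^{-3/2})$ or better under the hypothesis $4\le r\le z$, and confirming the exact lower-order constant $\tf{r^2-z}{8z^2}$ in the bracket — this last requiring that one keeps the $-\tf14 s^2$ term from $\tf{dt}{ds}$ but discards the $O(s^4)$ remainder, consistent with how Lemma \ref{lemma2} was set up.
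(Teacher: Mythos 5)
Your approach is exactly the paper's: start from Lemma \ref{lemma3}, rescale by $x=\sqrt z\,s$, and evaluate the resulting integral exactly via the Gaussian Fourier transform (\ref{eqftransfrom}) and its second derivative. The ``discrepancy'' you worry about at the end is only an arithmetic slip, not a genuine issue. You correctly compute $y^2/4 = r^2/(2z)$, but then when substituting into the bracket you write $\tfrac12 - \tfrac{r^2}{4z}$ instead of $\tfrac12 - \tfrac{r^2}{2z}$. With the correct substitution,
\[
1 - \frac{1}{4z}\left(\frac{1}{2} - \frac{r^2}{2z}\right)
\;=\; 1 - \frac{1}{8z} + \frac{r^2}{8z^2}
\;=\; 1 + \frac{r^2 - z}{8z^2},
\]
which matches the proposition exactly. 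There is also no extra bookkeeping to do about $O(s^4)$ corrections: Lemma \ref{lemma3} already isolates the \emph{exact} integrand $e^{-zs^2}e^{\sqrt2 irs}[1-\tfrac14 s^2]$ over all of $\mathbb R$ with a concrete error term, so the proof of Proposition \ref{prop1} consists solely of evaluating that integral in closed form; nothing new is absorbed into the error beyond what Lemma \ref{lemma3} already provides, and the error term is inherited verbatim.
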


\begin{proof} We use Lemma \ref{lemma3} along with the above Fourier theory
to compute the integral of Lemma \ref{lemma3} \textit{exactly}. 
Consequently, the error term here is also exactly that of Lemma 
\ref{lemma3}.

By the change of variable~ $x = \sqrt{z}s$, the integral in Lemma
\ref{lemma3} becomes
\[
\frac{e^{-z}}{\sqrt{2z}} \int_{\mathbb{R}}
e^{-x^2} e^{ix(\frac{\sqrt{2}r}{\sqrt{z}})} 
\left(1 - \frac{1}{4z}x^2 \right)  ~dx.
\]
So, one term here is exactly the Fourier transform of $e^{-x^2}$, 
evaluated at  $y = \frac{\sqrt{2}r}{\sqrt{z}}$.
Applying equation (\ref{eqftransfrom}) to  this first term,
as well as the term
involving the $x^2$ (which brings a second $y$-derivative of
$\sqrt{\pi}
e^{\frac{-y^2}{4}}$)
gives our result.  
\end{proof}

(Note: If we fix $r$ and send $z$ to $\infty$ we recover the asymptotic
$\sqrt{\frac{\pi}{2z}}e^{-z}$, which is valid for any fixed $r$ by
Laplace's method. 
This proposition gives us an asymptotic
for $z$, but is also \textit{uniform}
 for $r$ of size even much larger
than $\sqrt{z}$. For example, one will see that the $r$ values in the
region  ~$\sqrt{z} \le r \le \sqrt{z\log(z)}$~ will still contribute to
the sum (\ref{eqperiod}).)

\begin{prop}
For $z$ sufficiently large, and $z^{\frac{14}{25}} \le r \le z$, we have
\[
K_{ir}(z) = ~e^{-z} \cdot O \left( \frac{z^{\frac{15}{2}}}{r^{16}} 
\right)  .
\]  \label{prop2}
\end{prop}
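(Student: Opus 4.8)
The plan is to prove Proposition \ref{prop2} by repeated integration by parts in the integral representation (\ref{eqstart}),
\[
K_{ir}(z)=\frac12\int_{\R}e^{-z\cosh t}e^{irt}\,dt,
\]
exploiting that $e^{irt}=\tfrac1{ir}\tfrac{d}{dt}e^{irt}$. Since $e^{-z\cosh t}$ and all its derivatives decay faster than any exponential, every boundary term vanishes, and after $N$ integrations by parts one obtains
\[
K_{ir}(z)=\frac{(-1)^N}{2(ir)^N}\int_{\R}\Bigl(\frac{d^N}{dt^N}e^{-z\cosh t}\Bigr)e^{irt}\,dt,
\qquad\text{so}\qquad
|K_{ir}(z)|\le\frac1{2r^N}\int_{\R}\Bigl|\frac{d^N}{dt^N}e^{-z\cosh t}\Bigr|\,dt .
\]
Taking $N=16$, the proposition reduces to showing $\int_{\R}\bigl|\tfrac{d^N}{dt^N}e^{-z\cosh t}\bigr|\,dt=e^{-z}\,O\!\bigl(z^{(N-1)/2}\bigr)$ with a uniform constant for $z$ large; note $(16-1)/2=15/2$.

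Next I would record the structure of the high derivatives. Writing $\tfrac{d^N}{dt^N}e^{-z\cosh t}=Q_N(t,z)\,e^{-z\cosh t}$, the chain rule gives $Q_0=1$ and $Q_{j+1}=Q_j'-z\sinh t\,Q_j$; hence $Q_N(t,z)=\sum_{k=0}^{N}z^k q_{N,k}(t)$, where the $q_{N,k}$ are fixed polynomials in $\sinh t,\cosh t$ (independent of $z$) obeying $q_{j+1,k}=q_{j,k}'-\sinh t\,q_{j,k-1}$, with top term $q_{N,N}=(-\sinh t)^N$ and, crudely, total degree $\deg q_{N,k}\le N$ in $\sinh t,\cosh t$. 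The crucial point is the vanishing-order estimate
\[
\operatorname{ord}_{t=0}q_{N,k}\ \ge\ 2k-N ,
\]
which I would prove by induction on $N$: for any smooth function $\operatorname{ord}_{t=0}f'\ge\operatorname{ord}_{t=0}f-1$, while $\operatorname{ord}_{t=0}(\sinh t\cdot f)=\operatorname{ord}_{t=0}f+1$, so the recursion gives $\operatorname{ord}q_{j+1,k}\ge\min\bigl((2k-j)-1,\,1+(2(k-1)-j)\bigr)=2k-(j+1)$, with the base case $\operatorname{ord}q_{0,0}=0$.

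I would then bound $\int_{\R}|Q_N(t,z)|e^{-z\cosh t}\,dt\le\sum_{k=0}^{N}z^k\int_{\R}|q_{N,k}(t)|e^{-z\cosh t}\,dt$ by splitting at $|t|=1$. For $|t|\le1$, analyticity of $q_{N,k}$ together with the order estimate gives $|q_{N,k}(t)|\le C_N|t|^{\max(0,\,2k-N)}$, and $\cosh t-1\ge t^2/2$ gives $e^{-z\cosh t}\le e^{-z}e^{-zt^2/2}$; the resulting Gaussian integral contributes $C_N e^{-z}z^{\,k-\max(0,2k-N)/2-1/2}$, which equals $C_N e^{-z}z^{(N-1)/2}$ when $2k\ge N$ and is strictly smaller when $2k<N$. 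For $|t|>1$ one uses $\cosh t\ge\cosh1>1$ and the crude degree bound to see this piece is exponentially smaller than $e^{-z}$, hence negligible. Summing over the $N+1$ values of $k$ yields $\int_{\R}|Q_N|e^{-z\cosh t}\,dt\le C_N e^{-z}z^{(N-1)/2}$, whence $|K_{ir}(z)|\le\tfrac{C_N}{2}\,e^{-z}z^{(N-1)/2}/r^{N}$; with $N=16$ this is exactly $K_{ir}(z)=e^{-z}\,O\!\bigl(z^{15/2}/r^{16}\bigr)$.

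The main obstacle is precisely the order estimate $\operatorname{ord}_{t=0}q_{N,k}\ge 2k-N$: without it the purely size-based bound $|q_{N,k}(t)|\le C_N$ on $|t|\le1$ only produces $\int_{\R}|Q_N|e^{-z\cosh t}\,dt=e^{-z}O(z^{N-1/2})$, which for $N=16$ is too weak by a factor $z^{8}$. Everything else is routine Gaussian estimation. I would remark that the hypothesis $z^{14/25}\le r\le z$ is not actually needed for the argument (the bound holds for all $r>0$ once $z$ is large); it merely records the regime in which this estimate will be fed into the Tauberian analysis. An equivalent route is to integrate by parts in the variable $s$ of (\ref{eqchoshts1}) instead, where the substitution collapses the amplitude $1/(\sqrt{s^2+2}\,t'(s))$ to the constant $\tfrac12$, leading to the same recursion and the same final bound.
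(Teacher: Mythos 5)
Your proof is correct, but it follows a genuinely different route from the paper's. The paper integrates by parts $15$ times, then runs a separate oscillatory (alternating series) analysis on each of the resulting integrals $\int \sinh^a t\cosh^b t\,e^{-z\cosh t}\,e^{irt}\,dt$: each such integrand is monotone on either side of its peak $t_0(z)\asymp z^{-1/2}$, so breaking into half-periods of $\sin(rt)$ gives an alternating sum bounded by the single term at the peak, picking up one extra factor of $1/r$ beyond the $1/r^{15}$ from integration by parts. This oscillatory gain is exactly why the paper needs $r\gg\sqrt{z}$ (so that the period $2\pi/r$ fits inside the peak width $t_0(z)$), and the final step --- ``each further integration by parts gives a factor of $\sqrt z/r$'' --- is asserted rather than argued. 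You instead integrate by parts $16$ times and estimate the $L^1$ norm $\int_{\R}\bigl|\tfrac{d^{16}}{dt^{16}}e^{-z\cosh t}\bigr|\,dt$ directly, with no appeal to cancellation at all. Your key input is the vanishing-order lemma $\operatorname{ord}_{t=0}q_{N,k}\ge 2k-N$, which you prove cleanly by induction from the recursion $q_{j+1,k}=q_{j,k}'-\sinh t\,q_{j,k-1}$; combined with $\cosh t-1\ge t^2/2$ and the Gaussian moment $\int|t|^a e^{-zt^2/2}\,dt\asymp z^{-(a+1)/2}$, this yields $\int|Q_N|e^{-z\cosh t}\,dt=e^{-z}O(z^{(N-1)/2})$, and $N=16$ gives the stated $e^{-z}z^{15/2}/r^{16}$. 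The bookkeeping is right: the paper's ``$N$ integrations plus oscillation'' gives $e^{-z}z^{N/2}/r^{N+1}$, your ``$N$ integrations without oscillation'' gives $e^{-z}z^{(N-1)/2}/r^N$, and these coincide after shifting $N$ by one. Your approach is more self-contained (it replaces the paper's sketched ``factor of $\sqrt z/r$ per step'' by a closed induction), and as you correctly note it does not use the hypothesis $z^{14/25}\le r\le z$ at all --- the bound holds uniformly for every $r>0$ once $z$ is large, whereas the paper's alternating-series argument genuinely requires $r\gg\sqrt z$. The trade-off is that the paper's oscillatory method, being sharper per integration by parts, could in principle be pushed to give better bounds, while yours is cheaper to verify rigorously.
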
 

\begin{proof}  With our assumptions on $r$ and $z$, integrate by-parts
\[
2K_{ir}(z) =  \int_{\mathbb{R}} e^{-z\cosh{t}}  
e^{irt} dt
\]
15 times.
Note that $z$ must be very large here.   

Specifically, as a smaller example we will show
$K_{ir}(z) =  e^{-z}O(\frac{z^{5/2}}{r^6})$ after 5 integrations
by-parts. We have
\begin{equation}
  2K_{ir}(z) = \frac{-1}{ir^5} \int_{\R} \left[
 \frac{d^5}{dt^5} e^{-z\cosh{t}}   \right]
   e^{irt} dt  ,   \label{eq5byparts}
\end{equation}
where 
\begin{equation}
\frac{d^5}{dt^5} e^{-z\cosh{t}} 
=\{ \begin{matrix}
 -z^5\sinh^5(t) + 10
z^4\sinh^3(t)\cosh(t) - 25z^3\sinh^3(t)\\
~~~~~~~~~ + 15z^2\sinh(t)\cosh(t) -
z(15z^2+1)\sinh(t)  \label{eq5diffK}
\end{matrix}\} \cdot e^{-z\cosh{t}} .
\end{equation}
Our strategy for obtaining an
estimate at 15 integrations is the same for 5, which we explain now.

Now, we replace (\ref{eq5diffK}), which has 5 terms, into the integral of 
(\ref{eq5byparts}), and then separate to make 5 integrals.  We obtain
an estimate for the size of each integral.
For example, if we take the second term on the right
side of (\ref{eq5diffK}), we become interested in the 
size of the integral
\begin{equation}
 \frac{z^4}{r^5} 
 \int_{\mathbb{R}} \left[ \sinh^3(t)\cosh(t) e^{-z\cosh{t}}   \right]
   e^{irt} dt  .    \label{eqsize2ndterm}
\end{equation}

Let us take a closer look at the function $\sinh^3(t)\cosh(t) e^{-z\cosh{t}} $
appearing in (\ref{eqsize2ndterm}).  
For convenience, in this proposition, let us use
$h_z(t)=  \sinh^3(t)\cosh(t) e^{-z\cosh{t}}$.
For each $z$, $h_z(t)$ is odd. Its derivative is
\begin{equation}
 e^{-z\cosh{t}}  \sinh^2(t) [ -z\sinh^2(t)\cosh(t)+ 4\sinh^2(t) + 3],
     \label{eqderivalt}
\end{equation}
which is clearly zero at $t=0$.  The term in brackets has only one
zero for $t>0$ (recall, $z$ is very large).  This happens when
\[
z = \frac{4}{\cosh(t)} + \frac{3}{\sinh^2(t)\cosh(t)}.
\]
Using the Maclaurin series for both $\sinh(t)$ and $\cosh(t)$,we find this
zero happens at a value of $t$ that is of order $1/\sqrt{z}$.  In other
words, for $z$ sufficiently large, we have the other zero (for $t>0$)
of the derivative  (\ref{eqderivalt}) occurs at some $t_0(z)$ with
\begin{equation}
 t_0(z) \asymp \frac{\sqrt{3}}{\sqrt{z}}.  \label{eqtosqrtz}
\end{equation}
Consequently, ~$h_z(t)=\sinh^3(t)\cosh(t) e^{-z\cosh{t}}$
is strictly increasing (resp. decreasing) on $(0,t_0(z))$ (resp.
($t_0(z),\infty)$).  Now, since this function is odd,
we are integrating    ~$h_z(t)$     against 
$\sin(rt)$ in (\ref{eqsize2ndterm}), which has period $2\pi/r$.  We will
break up each interval where ~$h_z(t)=\sinh^3(t)\cosh(t) e^{-z\cosh{t}}$
is monotone into segments of length
$ \pi/r$ with endpoints $ k\pi/r, k\in {\mathbb{Z} }$.  
More specifically, put $n_1(z)\in \mathbb{N}$ be the first 
(smallest)  integer so that
$ n_1(z)\pi/r \ge t_0(z)$. Then
\[
\int_{t_0(z)}^\infty \sinh^3(t)\cosh(t) e^{-z\cosh{t}}\sin(rt)~dt 
\]
\[
=\int_{t_0(z)}^{ n_1(z)\pi/r} h_z(t)\sin(rt)~dt
 +  \sum_{k = n_1(z)}^\infty
\int_{ k\pi/r}^{ (k+1)\pi/r} h_z(t) \sin(rt)~dt
 \]
\[ =
\int_{t_0(z)}^{ n_1(z)\pi/r} h_z(t) \sin(rt)~dt
 +\sum_{k = n_1(z)}^\infty a_k,
\]
with the obvious definition for $a_k$.  Since 
$h_z(t)= \sinh^3(t)\cosh(t) e^{-z\cosh{t}}$ is strictly decreasing on 
$[t_0(z), \infty)$, we see that the series $ \sum_{k = n_1(z)}^\infty a_k$
is an alternating series where $|a_{k+1}| \le |a_k|$.  Consequently, the
sum of this series is in absolute value $\le |a_{n_1(z)}|$.

How big can $|a_{n_1(z)}|$ be? We are integrating over an interval of
length $\pi/r$. Further, the maximum of the integrand must be, using
(\ref{eqtosqrtz}), at most
\[
 \left. \sinh^3(t)\cosh(t) e^{-z\cosh{t}}\right|_{t_0(z)} =
O\left( \frac{e^{-z}}{z^{3/2}} \right).
\]
This gives us a total of $O\left( \frac{e^{-z}}{rz^{3/2}} \right)$.
The integral 
\[
\int_{t_0(z)}^{ n_1(z)\pi/r}\sinh^3(t)\cosh(t)
e^{-z\cosh{t}}\sin(rt)~dt
\] is handled similarly (the interval is shorter than $\pi/r$).

This leaves us with 
\[
\int_0^{t_0(z)} \sinh^3(t)\cosh(t)
e^{-z\cosh{t}}\sin(rt)~dt.
\]
By our assumption on $r$ and $z$, namely that
$z^{14/25} \le r$, this implies that $2\pi/r$ (the period of the
oscillating factor) is \textit{much} less than
$t_0(z)$.  Consequently, we can do the same type of analysis for the
interval $(0,t_0(z))$, and actually conclude the same result.
 Here, we will have a finite alternating sequence, whose absolute
value terms are increasing.  This means we use the last term for
estimates; this brings us to an interval
$[(n_1(z)-1)\pi/r, t_0(z)]$, and once again we use that
$h_z(t)=\sinh^3(t)\cosh(t)e^{-z\cosh{t}}\sin(rt)$
has a maximum at $t_0(z)$.  For
$t<0$, we just use symmetry; ~$\sinh^3(t)\cosh(t) e^{-z\cosh{t}}\sin(rt)$
is an even function of $t$ for any $z$.  We conclude the same result, and
leave the details to the reader.
  
This gives us the contribution of the 
~$h_z(t)=\sinh^3(t)\cosh(t) e^{-z\cosh{t}}$ term from (\ref{eq5diffK})
into (\ref{eq5byparts}) is  $e^{-z}O(\frac{z^{5/2}}{r^6})$.
  This bound
holds for \textit{each} of the 5 pieces  (\ref{eq5diffK}).
Consequently by
(\ref{eq5byparts}), we have $K_{ir}(z) = e^{-z}O(\frac{z^{5/2}}{r^6})$,
with our assumptions on $r$ and $z$.  We see from computation that 
\textit{each} integration by parts gives us an extra factor of 
$\sqrt{z}/r$, keeping our assumptions on $r$ and $z$.
Integrating by parts 10 more times, this gives the proposition, since we
have a factor of $\sqrt{z}/r$ each time.
\end{proof}

\medskip
We remark that if one restricts to special regions, one has better estimates 
for the $K$-Bessel functions than those in Propositions \ref{prop1} and \ref{prop2},
but the contributions from these regions are negligible for our asymptotics.
 
\subsection{An eigenvalue moment estimate}
In this subsection, we record estimates needed for our main theorem below.

\begin{lem}
Let $\alpha \in \mathbb{R}$ with $\alpha \not= -2$. Let $m_1 , m_2 >0$
with $m_2 > m_1$.  Then  \[
\lim_{z \rightarrow \infty} \sum_{z^{m_1} \le r_j \le z^{m_2}} 
r_j^\alpha  = O \(z^{(\alpha  +2)m_1}+z^{(\alpha  +2)m_2} \).
\]  Further, if $\alpha > -2$, \[
\lim_{z \rightarrow \infty} \sum_{4 \le r_j \le z^{m_1}}  r_j^\alpha 
= O \( z^{(\alpha  +2)m_1} \).
\]   \label{lemma4}
\end{lem}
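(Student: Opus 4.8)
The plan is to reduce everything to Weyl's law. Via the substitution $\lambda_n = \tfrac14 + r_n^2$, Weyl's law $\#\{\lambda_n \le x\} \sim \tfrac{\mathrm{vol}(X)}{4\pi}x$ gives the counting bound $N(R) := \#\{j : r_j \le R\} = O(R^2)$ for all $R \ge 1$; the finitely many exceptional eigenvalues $\lambda_n < \tfrac14$ are irrelevant, since $m_1, m_2 > 0$ forces the ranges of summation to consist, for $z$ large, of genuine real $r_j$ with $r_j \ge 4$. (I read the $\lim_{z\to\infty}$ in the statement in the usual way, as the assertion that the displayed sum, viewed as a function of $z$, satisfies the stated $O$-bound for all sufficiently large $z$.) First I would fix $z$ large and split the summation range into dyadic blocks $[2^k, 2^{k+1})$. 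On such a block one has $r_j^\alpha \le C_\alpha\, 2^{k\alpha}$ with $C_\alpha = \max(1, 2^\alpha)$, while the number of $r_j$ lying in the block is at most $N(2^{k+1}) = O(2^{2k})$; hence the contribution of each block is $O\!\left(2^{k(\alpha+2)}\right)$.

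For the first estimate, observe $[z^{m_1}, z^{m_2}] \subset [2^{K_1}, 2^{K_2+1})$ with $K_1 = \lfloor m_1 \log_2 z\rfloor$ and $K_2 = \lceil m_2 \log_2 z\rceil$, so the whole sum is bounded by $\sum_{k=K_1}^{K_2} O\!\left(2^{k(\alpha+2)}\right)$. This is a finite geometric sum with ratio $2^{\alpha+2} \ne 1$ — here the hypothesis $\alpha \ne -2$ enters — so it is comparable to its largest term: to $O\!\left(2^{K_2(\alpha+2)}\right) = O\!\left(z^{m_2(\alpha+2)}\right)$ when $\alpha + 2 > 0$, and to $O\!\left(2^{K_1(\alpha+2)}\right) = O\!\left(z^{m_1(\alpha+2)}\right)$ when $\alpha + 2 < 0$; either way one gets $O\!\left(z^{(\alpha+2)m_1} + z^{(\alpha+2)m_2}\right)$. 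The second estimate is the same computation with $\alpha > -2$: summing the block bounds over $2 \le k \le \lfloor m_1 \log_2 z\rfloor$ (the lower cutoff $2$ reflecting $r_j \ge 4$) gives a geometric sum dominated by its top term $O\!\left(z^{m_1(\alpha+2)}\right)$. (Equivalently, one could run this as a Stieltjes/partial-summation argument, writing the sum as $\int t^\alpha\, dN(t)$ and integrating by parts, using $N(t) = O(t^2)$ and $\int_a^b t^{\alpha+1}\,dt = \tfrac{b^{\alpha+2}-a^{\alpha+2}}{\alpha+2}$.)

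There is no genuine obstacle here; the argument is elementary bookkeeping. The only points requiring (minor) care are tracking the $\alpha$-dependent constant $C_\alpha$ and the two boundary dyadic blocks, and noting that the direction in which the geometric series is summed — hence whether the $m_1$- or $m_2$-power dominates — is dictated by the sign of $\alpha + 2$. This also explains the exclusion $\alpha \ne -2$: in that borderline case the block contributions are each $O(1)$ and the sum is instead $O(\log z)$, which is not a power of $z$.
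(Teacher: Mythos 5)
Your argument is correct, and it takes a mildly different route from the paper's. The paper passes from the sum to a Stieltjes integral, writing $\sum_{z^{m_1}\le r_j\le z^{m_2}} r_j^\alpha \asymp \sum \lambda_j^{\alpha/2} = \int x^{\alpha/2}\,dN(x)$ with $N(x)=\#\{j:\lambda_j\le x\}$, integrates by parts once, and then uses the Weyl-law bound $N(x)\le c_1 x$ in both the boundary term and the integral $\int x^{\alpha/2-1}N(x)\,dx$; the sign of $\alpha+2$ then decides which endpoint dominates after evaluating $\int x^{\alpha/2}\,dx$. You instead stay discrete: you convert Weyl's law into the counting bound $\#\{j:r_j\le R\}=O(R^2)$, cut the range into dyadic blocks, bound each block's contribution by $O(2^{k(\alpha+2)})$, and sum the resulting geometric series. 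These two arguments are essentially Abel summation in continuous and discrete clothing, and both rest on exactly the same input (Weyl's law and $r_j\asymp\sqrt{\lambda_j}$), but your version makes the role of the hypothesis $\alpha\ne -2$ and the endpoint dominance completely transparent at a glance (ratio $2^{\alpha+2}\ne 1$), whereas the paper's requires you to look at the evaluated antiderivative. You also flag the Stieltjes alternative yourself, which is in fact what the paper does. No gaps; the only bookkeeping worth double-checking is the one you already call out, namely the $\alpha$-dependent block constant $C_\alpha=\max(1,2^\alpha)$ and the two boundary blocks $K_1,K_2$, and those are handled correctly since $2^{K_1}\asymp z^{m_1}$ and $2^{K_2}\asymp z^{m_2}$ up to absolute constants.
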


\begin{proof}  First, let us define the function $N(x)$ to be the total
number of eigenvalues $\lambda_j$ with $\lambda_j \le x$.  Clearly, we are
taking $x \ge 0$, and we are counting eigenvalues with multiplicity.
For our situation, it is well-known  that Weyl's
law holds (\cite{Hej}, \cite{Iwa}), i.e., 
\begin{equation}
N(x) ~\sim~ \frac{\text{vol}(\Gamma \backslash \mathbb{H})}{4\pi} \cdot x
~~~\text{as}~x \rightarrow \infty.  \label{eqWeyl}
\end{equation}

Further, let us note that since $\lambda_j = \frac{1}{4}+r_j^2$, we have
\begin{equation}
r_j ~\asymp ~\sqrt{\lambda_j}   \label{eqrj}
\end{equation}
with uniform constants, since we will always be considering only
$r_j \ge 4$.  
By (\ref{eqWeyl}), we also have
\begin{equation}
N(x)  \le  c_1 x = O(x)~~~~~~~x\ge 1 ,  \label{eqnox}
\end{equation}
for a universal constant $c_1$. We will only need to
consider $\lambda_j \ge \f{65}4$ in the future for estimates.

Let us consider 
\[\lim_{z \rightarrow \infty} \sum_{z^{m_1} \le r_j \le z^{m_2}} 
r_j^\alpha.\]
  Since  $m_1>0$, and we are taking $z$ to $\infty$, we may
assume $z^{m_1} > 4$, so that the statements in the above paragraph
hold.  For $z$ large, by (\ref{eqrj}) notice that
\[
 \sum_{z^{m_1} \le r_j \le z^{m_2}} r_j^\alpha  ~\asymp
 \sum_{z^{m_1} \le r_j \le z^{m_2}} \lambda_j^{\frac{\alpha}{2}} ,
\]with universal constants depending on $m_1$, $m_2$, and $\alpha$, but
not on $z$.  Now
\[
 \sum_{z^{m_1} \le r_j \le z^{m_2}} \lambda_j^{\frac{\alpha}{2}} = 
\int_{z^{2m_1}+1/4}^{z^{2m_2}+1/4}   x^{\alpha/2}~dN(x).
\]By partial integration, this equals
\[
 \left.  x^{\alpha/2} N(x) \right|_{z^{2m_1}+1/4}^{z^{2m_2}+1/4}
-\frac{\alpha}{2} \int_{z^{2m_1}+1/4}^{z^{2m_2}+1/4}
x^{\frac{\alpha}{2}-1} N(x) ~dx  .
\]
Using Equation (\ref{eqnox}) for \textit{both} terms here, along with
simple estimates, gives the first part of this lemma, where the $O$
constant is allowed to depend on $\alpha$.  If
$\alpha > -2$, the $m_2$ term will dominate, while if $\alpha < -2$, the
$m_1$ term will dominate.  The second part of this lemma is proved the
same way.
\end{proof}

\subsection{The main result}

Using the estimates we have developed, we can now prove
\begin{thm} With notation as above,
\begin{equation} \label{thm1laplace}
\lim_{z \rightarrow \infty} ~~\frac{\sqrt{\pi}}{\sqrt{2z}}
\sum_{j=0}^\infty  e^{-\frac{r_j^2}{2z}}  \cdot
\left| \int_C  \phi_j \right| ^2 = \frac{1}{2} \len(C).
\end{equation}  \label{thm1}
\end{thm}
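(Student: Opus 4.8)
The plan is to derive Theorem~\ref{thm1} from the limit formula (\ref{eqperiod}) by replacing the Bessel factor $e^{z}K_{ir_n}(z)$ with its Gaussian approximation $\tfrac{\sqrt\pi}{\sqrt{2z}}e^{-r_n^2/(2z)}$, controlling the replacement via Proposition~\ref{prop1} in the bulk range and Proposition~\ref{prop2} in the tail range, and using Lemma~\ref{lemma4} to bound the resulting error sums. Throughout write $z$ for the parameter called $t$ in (\ref{eqperiod}) and recall $\int_C\phi_j = P(\phi_j)$. The one preliminary input I would use is the crude a priori bound $\sum_{\lambda_n\le x}|P(\phi_n)|^2 = O(x)$, equivalently $S(R) := \sum_{r_n\le R}|P(\phi_n)|^2 = O(R^2)$; this follows at once from $|P(\phi_n)|^2 \le \len(C)\int_C|\phi_n|^2$ and the pointwise Weyl law $\sum_{\lambda_n\le x}|\phi_n(p)|^2 = O(x)$ (uniformly for $p$ on $C$) integrated over $C$, or alternatively from Proposition~\ref{rtfprop} applied to a test function with nonnegative Selberg transform.

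Since $e^{z}K_{ir_0}(z)|P(\phi_0)|^2 = \sqrt{\pi/(2z)}\,\len(C)^2/\mathrm{vol}(X) = o(1)$, formula (\ref{eqperiod}) gives $e^{z}\sum_{n\ge 0}K_{ir_n}(z)|P(\phi_n)|^2 \to \tfrac12\len(C)$, so it suffices to prove
\[
\sum_{n\ge 0}\Bigl(e^{z}K_{ir_n}(z) - \tfrac{\sqrt\pi}{\sqrt{2z}}e^{-r_n^2/(2z)}\Bigr)|P(\phi_n)|^2 \longrightarrow 0 .
\]
I would split this by the size of $r_n$. For the finitely many $n$ with $r_n\le 4$, both $e^{z}K_{ir_n}(z)$ and $\tfrac{\sqrt\pi}{\sqrt{2z}}e^{-r_n^2/(2z)}$ are $O(z^{-1/2})$, so this part tends to $0$. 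For $4\le r_n\le z^{14/25}$ (a finite index set) Proposition~\ref{prop1} gives
\[
e^{z}K_{ir_n}(z) - \tfrac{\sqrt\pi}{\sqrt{2z}}e^{-r_n^2/(2z)} = \tfrac{\sqrt\pi}{\sqrt{2z}}\,e^{-r_n^2/(2z)}\,\tfrac{r_n^2 - z}{8z^2} + \tfrac1{\sqrt z}\,O\!\Bigl(\tfrac{r_n}{z\sqrt z} + \tfrac{\sqrt{r_n}}{\sqrt z}\,e^{-z/r_n}\Bigr),
\]
and I would estimate the weighted sum term by term: the $e^{-z/r_n}$ contribution is super-exponentially small since $z/r_n \ge z^{11/25}$; the $r_n/(z\sqrt z)$ contribution is $\ll z^{-2}\sum_{r_n\le z^{14/25}}r_n|P(\phi_n)|^2 \ll z^{-2}\bigl(z^{14/25}\bigr)^{3} = o(1)$ by $S(R)=O(R^2)$; and for the first term the elementary inequality $e^{-r^2/(2z)}|r^2 - z| \le z$ gives a bound $\ll z^{-3/2}S\bigl(z^{14/25}\bigr) \ll z^{-3/2+28/25} = o(1)$.

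For the tail $r_n > z^{14/25}$ I would invoke Proposition~\ref{prop2}---whose proof applies verbatim, indeed more easily, for all $r \ge z^{14/25}$ and not only $r\le z$---to get $e^{z}K_{ir_n}(z) = O\bigl(z^{15/2}r_n^{-16}\bigr)$; partial summation against $S(R)=O(R^2)$ gives $\sum_{r_n>T}r_n^{-16}|P(\phi_n)|^2 = O(T^{-14})$, so this tail of $\sum e^{z}K_{ir_n}(z)|P(\phi_n)|^2$ is $O\bigl(z^{15/2}(z^{14/25})^{-14}\bigr) = o(1)$. The companion Gaussian tail $\tfrac{\sqrt\pi}{\sqrt{2z}}\sum_{r_n>z^{14/25}}e^{-r_n^2/(2z)}|P(\phi_n)|^2$ is $o(1)$ in the same way, bounding $e^{-r^2/(2z)} \ll (z/r^2)^{8}$ and reusing the same partial summation; Lemma~\ref{lemma4} plays the analogous role if one prefers the pointwise bound $|P(\phi_n)|^2 \ll r_n$, at the cost of iterating Proposition~\ref{prop2} a few more times. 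Assembling the three ranges yields the displayed limit, hence $\tfrac{\sqrt\pi}{\sqrt{2z}}\sum_j e^{-r_j^2/(2z)}\bigl|\int_C\phi_j\bigr|^2 \to \tfrac12\len(C)$, which is (\ref{thm1laplace}).

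The main obstacle is the second-order term $\tfrac{r_n^2-z}{8z^2}$ in Proposition~\ref{prop1}: unlike the leading term it is not automatically negligible, and summing it weighted by $|P(\phi_n)|^2$ over the $\sim z$ frequencies $r_n$ of size $\sim\sqrt z$ (where it is individually only $O(1/z)$) can be shown to vanish only once $S(R)$ is controlled at the level $O(R^2)$; the pointwise restriction bound $|P(\phi_n)|^2 \ll r_n$ alone gives merely $S(R) = O(R^3)$, which is too weak. A secondary point requiring care is that the crossover from Proposition~\ref{prop1} to the stronger Proposition~\ref{prop2} must take place at a power $z^{14/25}$ strictly below $z$, precisely because the Proposition~\ref{prop1} error terms are no longer summable against $|P(\phi_n)|^2$ once $r_n$ approaches $z$.
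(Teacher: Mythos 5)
Your proof is correct and follows the same overall strategy as the paper: derive the Gaussian-weighted limit from the Bessel-weighted limit (\ref{eqperiod}) by controlling the difference in ranges of $r_n$, using Proposition~\ref{prop1} in the bulk and Proposition~\ref{prop2} in the tail. The genuine difference is in the auxiliary input controlling the periods. The paper uses Reznikov's geodesic restriction bound $|P(\phi_j)|^2 = O(\lambda_j^{1/4})$ together with the Weyl-law moment estimate of Lemma~\ref{lemma4}; you instead use the elementary \emph{average} bound $S(R) = \sum_{r_n\le R}|P(\phi_n)|^2 = O(R^2)$, deduced from Cauchy--Schwarz and the local Weyl law (or from the RTF itself with a nonnegative transform). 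Your bound is more elementary --- it does not rely on a nontrivial restriction theorem --- and it is in fact sharper for the sums in question: $S(R) = O(R^2)$ is stronger than what Reznikov $+$ Weyl would give directly ($O(R^{5/2})$). It also lets you streamline the paper's four regions to three: the neat uniform inequality $e^{-r^2/(2z)}|r^2-z| \le z$ disposes of the second-order Bessel correction over the whole bulk $4 \le r_n \le z^{14/25}$ in one stroke, absorbing the paper's more delicate Proposition~\ref{prop4} computation at $r\sim\sqrt z$; and partial summation against $S(R)$ with a pushed-through Proposition~\ref{prop2} (which, as you note, applies verbatim for all $r\ge z^{14/25}$) handles the entire infinite tail at once, circumventing the technical reason the paper splits off $r>z$ separately in Proposition~\ref{prop6}. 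One small slip in the closing discussion: the bound $|P(\phi_n)|^2 \ll r_n$ that you call the ``pointwise restriction bound'' is the trivial sup-norm bound; Reznikov's restriction bound gives the stronger $\ll r_n^{1/2}$, which with Weyl's law yields $S(R) = O(R^{5/2})$, not $O(R^3)$. This does not affect the argument, which correctly uses only $S(R)=O(R^2)$.
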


\begin{proof}  Once again, consider the  sum in Equation (\ref{eqperiod}), i.e.,
\begin{equation}
e^{z} \sum_j  K_{i r_j}(z) \left| \int_C \phi_j \right| ^2\label{eqbasic}
\end{equation}

There are a finite number of
terms in the sum here, with
$r_j < 4$.  For \textit{each} of these terms, one has a limit of 
$\sqrt{\frac{\pi}{2z}}e^{-z}$ as $z \rightarrow \infty$ from Laplace's
method.
A result of Reznikov \cite{Reznikov}
gives
\[
 \int_C  |\phi_j|^2  = O(\lambda_j^{1/4}),
 \]  
and so by Cauchy-Schwartz, 
 \begin{equation}
\left| \int_C  \phi_j \right| ^2   = O(\lambda_j^{1/4}).   \label{eqrez}
\end{equation}
This bound will be important for us.  Note that the trivial $L^{\infty}$
bound, $O(\lambda_j^{1/2})$, is not sufficient for our purposes.
(In fact, Zelditch \cite[Corollary 3.3]{Zel2} showed that the periods 
$|\int_C \phi_j|$ are bounded by a constant, but his argument
essentially makes use of what we are 
trying to prove.)

In particular, the sum of the finite number of terms in (\ref{eqbasic}) 
for which $r_j <4$ can be
bounded by
\[
e^{z} \cdot  O \(\sum_{r_j < 4} \frac{e^{-z}}{\sqrt{z}}
\sqrt[4]{\f{65}4}\) 
=O\(\f {N\(65/4\)} {\sqrt{z}}\) 
\rightarrow 0 ~~\text{as}~z
\rightarrow
\infty.
\]

Consequently, we will
 separate the sum in equation (\ref{eqbasic})
into four separate regions.  These regions will be:
$4 \le r \le \sqrt{z}$, ~$\sqrt{z} \le r \le z^{14/25}$, 
~$z^{14/25} \le r \le z$, ~and $r >z$.  We will prove each bound, in a 
separate proposition, one for each region.  

\begin{prop}
\[
\lim_{z \rightarrow \infty} ~e^{z} 
\sum_{4 \le r_j \le \sqrt{z}}  K_{i r_j}(z) \left| \int_C
\phi_j \right|^2   =
\lim_{z \rightarrow \infty} ~~\frac{\sqrt{\pi}}{\sqrt{2z}}
\sum_{4 \le r_j \le \sqrt{z}} e^{-\frac{r_j^2}{2z}}  \cdot
\left| \int_C  \phi_j \right| ^2.
\]  \label{prop3}
\end{prop}

\begin{proof} In light of the error terms from Proposition \ref{prop1} as well
as Reznikov's bound (\ref{eqrez}), it is sufficient to show
\[
\sum_{4 \le r_j \le \sqrt{z}} \left( e^{-\frac{r_j^2}{2z}}
\cdot \frac{|r_j^2-z|}{z^{5/2}} + \frac{r_j}{z^2} \right) \cdot
\lambda_j^{1/4}
\rightarrow 0~~~~\text{as}~~z\rightarrow \infty.
\]
By our assumptions on $r_j$ and $z$ {here}, 
we have incorporated the error term
$\frac{\sqrt{r}}{\sqrt{z}}e^{-z/r}$ into the $\frac{r}{z\sqrt{z}}$
error term (inside the  $O$ term of Proposition \ref{prop1})
using simple $\log$ estimates.

If we use the trivial bound of 1 for the exponential term, as well as
$|r_j^2-z| < z$, then the uniform asymptotic (\ref{eqrj}) gives the term
above is
\[
O\left(   \int_{\f{65}4}^{z+1/4}~
\frac{\lambda^{1/4}}{z^{3/2}} + \frac{\lambda^{3/4}}{z^2}  
~dN(\lambda) \right).
\] This can be shown to be $O(1/\sqrt[4]{z})$ using Lemma \ref{lemma4}.
Specifically, this
term can be split into two integrals, which both arise by
estimating sums from Lemma \ref{lemma4}; the first with $\alpha = 1/2$,
the  second with $\alpha = 3/2$, and both with $m_1 = 1/2$.
\end{proof}

\begin{prop}
\[
\lim_{z \rightarrow \infty} ~e^{z} 
\sum_{\sqrt{z} \le r_j \le z^{\frac{14}{25}}}  K_{i r_j}(z) \left| \int_C
\phi_j \right|^2   =
\lim_{z \rightarrow \infty} ~~\frac{\sqrt{\pi}}{\sqrt{2z}}
\sum_{\sqrt{z} \le r_j \le z^{\frac{14}{25}}} e^{-\frac{r_j^2}{2z}}  \cdot
\left| \int_C  \phi_j \right| ^2.
\]  \label{prop4}
\end{prop}

\begin{proof}  As in the previous proposition, the assumptions on $r_j$ and $z$
in the sum of this proposition let us absorb the 
$\frac{\sqrt{r}}{\sqrt{z}}e^{-z/r}$ term into the $\frac{r}{z\sqrt{z}}$
error term (again,  inside the $O$ term of Proposition
\ref{prop1}).

From the error terms in Proposition \ref{prop1}  and
Reznikov's bound (\ref{eqrez}), it is sufficient to show
\[
\sum_{\sqrt{z} \le r_j \le z^{\frac{14}{25}}}
 \left( e^{-\frac{r_j^2}{2z}}
\cdot \frac{|r_j^2-z|}{z^{5/2}} + \frac{r_j}{z^2} \right) \cdot
\lambda_j^{1/4}
\rightarrow 0~~~~\text{as}~~z\rightarrow \infty.
\]
In this region, $r_j^2 >z$ and so $|r_j^2-z|\le r_j^2$.  Consequently, we
need to estimate
\[
\sum_{\sqrt{z} \le r_j \le z^{\frac{14}{25}}}
 \left( e^{-\frac{r_j^2}{2z}}
\cdot \frac{r_j^2}{z^{5/2}} + \frac{r_j}{z^2} \right) \cdot
\lambda_j^{1/4} \]
which is (uniformly) asymptotic to
\[ \frac{1}{z^2}\int_{z+1/4}^{z^{28/25}+1/4} 
\lambda^{3/4} ~dN(\lambda) + 
\frac{e^{\frac{1}{8z}}}{z^{5/2}}\int_{z+1/4}^{z^{28/25}+1/4}
e^{\frac{-\lambda}{2z}}  \lambda^{5/4} ~dN(\lambda).
\]
by (\ref{eqrj}).  (Note there are no issues, such as uniform constants, with
asymptotics here, since an asymptotic was not used in the exponential
factor.) The first integral here can be shown to be $O(1/\sqrt[25]{z})$,
using Lemma \ref{lemma4}, with $\alpha = 3/2$, $m_1 = 1/2$ and
$m_2 = 14/25$.  Hence this term dies off.

The second integral is not quite as easy in this region, and we will need
to use the exponential term.  First, since we are sending $z\rightarrow
\infty$, we can ignore the $e^{\frac{1}{8z}}$ term. An integration by
parts of this integral gives
\[    \frac{1}{z^{5/2}}\left[
\left. e^{-\frac{\lambda}{2z}} \lambda^{5/4} N(\lambda) 
\right|_{z+1/4}^{z^{28/25}+1/4} + \int_{z+1/4}^{z^{28/25}+1/4}
e^{-\frac{\lambda}{2z}} \lambda^{1/4} \left[ \frac{\lambda}{2z}- 5/4 
\right] N(\lambda)~d\lambda  \right] 
\]
Using (\ref{eqnox}), the first term here is 
~$z^{-5/2}
O(e^{-z^{3/25}}\cdot z^{(\frac{5}{4}\cdot\frac{28}{25}+\frac{28}{25})}
+ z^{(\frac{5}{4}+1)})$ which clearly dies as $z \rightarrow \infty$.
Using the uniform bound $N(x) \le c_1 x$ from (\ref{eqnox}) the second
term can be seen to be trivially bounded by
\[
 \frac{c_1}{z^{5/2}} \int_{z+1/4}^{z^{28/25}+1/4}
e^{-\frac{\lambda}{2z}} \lambda^{5/4} \left[ \frac{\lambda}{2z}+ 5/4 
\right] ~d\lambda .
\]With the change of variables $x = \f \lambda  z$, this term
becomes
\[
 \frac{c_1}{z^{5/2}} \cdot z^{9/4}
\int_{1 + \frac{1}{4z}}^{z^{3/25}+\frac{1}{4z}}
~~e^{-\frac{x}{2}}  x^{5/4} \left[ \frac{x}{2}+ 5/4 
\right] ~dx .
\]
Now, the same integral taken over $[1,\infty)$ converges, and this yields
the bound of $O(1/\sqrt[4]{z})$.   
\end{proof}

\begin{prop} With notation as above,
\begin{equation}
0 =\lim_{z \rightarrow \infty} ~e^{z} 
\sum_{z^{\frac{14}{25}} \le r_j \le z}  K_{i r_j}(z) \left| \int_C
\phi_j \right|^2   =    \label{eqprop5}
\lim_{z \rightarrow \infty} ~~\frac{\sqrt{\pi}}{\sqrt{2z}}
\sum_{z^{\frac{14}{25}} \le r_j \le z} e^{-\frac{r_j^2}{2z}}  \cdot
\left| \int_C  \phi_j \right| ^2.
\end{equation}  \label{prop5}
\end{prop}

Even though we are dealing with zero here,  we will
{need} the form of the summand of
the right side to apply a Tauberian theorem later.

\begin{proof}
By Proposition \ref{prop2}, for $r_j$ and $z$ as such, we have
$K_{ir}(z) = ~e^{-z} \cdot O \left( \frac{z^{15/2}}{r_j^{16}} 
\right)$.  Using (\ref{eqrez}), the left hand side 
of (\ref{eqprop5}) can be bounded
by (up to an error, or rather an $O$ constant)
\[
\lim_{z\rightarrow \infty}  ~\left(z^{15/2} \cdot
\sum_{z^{\frac{14}{25}} \le r_j \le z}
\frac{\lambda_j^{1/4}}{r_j^{16}} \right) .
\]     
Since $\sqrt{\lambda_j} \asymp r_j$, the sum here is exactly that of 
Lemma \ref{lemma4} with $\alpha = -31/2$, $m_1 = 14/25$, and
$m_2 = 1$.  Using the bounds of Lemma \ref{lemma4} the above can be seen
to be $O(z^{-3/50})$.

Using Reznikov's bound  (\ref{eqrez}), the right hand side 
of (\ref{eqprop5}) can be seen to be bounded by
\[  \sqrt{\pi/2}
\lim_{z \rightarrow \infty} \frac{e^{1/8z}}{\sqrt{z}} 
\int_{z^{28/25}+
1/4}^{z^2+1/4} e^{-\frac{\lambda}{2z}} \lambda^{1/4}~dN(\lambda).
\]This integral is very similar to one occurring in Proposition
\ref{prop4}, and we evaluate it the same way with the same substitution.
We see the first term dies under integration by
parts.  Using $N(x) \le c_1 x$, we are left with two terms inside the
integral.  Estimating {trivially} (again with the substitution $x = \f \lambda  z$) we need to bound
\[
2c_1 z^{3/4} \int_{z^{3/25}+1/4z}^{z+1/4z}
e^{-x/2} x^2 ~dx .
\]
(Here we have used $2x^2 >x^{5/4}+x^{1/4}$ to simplify the integrand
after an integration by parts; recall we are sending $z \rightarrow
\infty$, so this estimate is trivial in the above region.)
Extending the integration domain to $[z^{3/25},\infty)$, two integrations by parts give us 
$O( c_1 z^{\frac{3}{4}+\frac{6}{25}}e^{-(z^{\frac{3}{25}}/2)})$.  We
clearly win due to the exponential term.  
\end{proof}

This leaves us with the simplest case:

\begin{prop}  With notation as above,
\begin{equation}
0 = \lim_{z \rightarrow \infty} ~e^{z} 
\sum_{r_j > z}  K_{i r_j}(z) \left| \int_C
\phi_j \right|^2   =     \label{eqprop6}
\lim_{z \rightarrow \infty} ~~\frac{\sqrt{\pi}}{\sqrt{2z}}
\sum_{r_j > z} e^{-\frac{r_j^2}{2z}}  \cdot
\left| \int_C  \phi_j \right| ^2
\end{equation} \label{prop6}
\end{prop}

\begin{proof}  We could indeed take care of this case, almost identically to
Proposition \ref{prop5} (with a similar comment as to the start of that
proof),  except for a rather technical piece that 
Lemma \ref{lemma4} only deals with finite sums.  A simple modification can
be made to handle the left side of (\ref{eqprop6}).  
However, in the region $r > z$ it is \textit{much} easier to see
$K_{ir}(z)$ decays rapidly, than using integration by parts 15 times.  If
one integrates by parts \textit{once} the integral representation for
$K_{ir}(z)$ from 8.432, \#5 of \cite{GR} on p.~917 with $x=1$, one
sees that $K_{ir}(z) = O(re^{-\frac{\pi}{2}r})$, for $r>z$.  This is
clear using known $\Gamma$ function estimates.  The rate of decay
$e^{-\frac{\pi}{2}r}$ is clearly better than $ e^{-z}$,
for $r>z$. Further, we
are summing over $r_j$; but this estimate is easy.

The right side of (\ref{eqprop6}) can be handled almost identically to
the right side of (\ref{eqprop5}), as in the  proof of
Proposition \ref{prop5}.  We leave the details to the reader.  
\end{proof}

These estimates prove the theorem.
\end{proof}

We now come to the main result of this section.
\begin{thm}
\[
\sum_{\lambda_j \le x}  |P(\phi_n)|^2  ~\sim~
\frac{\len(C)}{\pi} \sqrt{x}
~~~~\text{as}~ x \rightarrow  \infty.   \label{thm2}
\]
\end{thm}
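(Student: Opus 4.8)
The plan is to deduce Theorem~\ref{thm2} from the smoothed (Laplace-transformed) asymptotic of Theorem~\ref{thm1} by a Tauberian argument, using crucially that the coefficients $a_j := |P(\phi_j)|^2$ are nonnegative.

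First I would restate Theorem~\ref{thm1} as a statement about a Laplace transform. Set $s = 1/(2z)$, so that $s \to 0^+$ as $z \to \infty$. Since $\lambda_j = \frac14 + r_j^2$ and $e^{-s/4} \to 1$, the limit~(\ref{thm1laplace}) is equivalent to
\[
L(s) := \sum_{j=0}^\infty e^{-\lambda_j s}\, a_j \;\sim\; \frac{\len(C)}{2\sqrt{\pi}}\, s^{-1/2} \qquad (s \to 0^+).
\]
In other words, the Laplace--Stieltjes transform of the nonnegative measure $\nu$ on $[0,\infty)$ that assigns mass $a_j$ to each eigenvalue $\lambda_j$ has a pure power singularity of order $\rho = \tfrac12$ at the origin.

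Next I would apply Karamata's Tauberian theorem: if $\nu \ge 0$ and $\int_0^\infty e^{-\lambda s}\,d\nu(\lambda) \sim C\,s^{-\rho}$ as $s \to 0^+$ with $\rho \ge 0$, then $\nu([0,x]) \sim \frac{C}{\Gamma(\rho+1)}\, x^{\rho}$ as $x \to \infty$. With $C = \len(C)/(2\sqrt\pi)$, $\rho = \tfrac12$, and $\Gamma(\tfrac32) = \sqrt\pi/2$, this yields
\[
\sum_{\lambda_j \le x} |P(\phi_j)|^2 \;=\; \nu([0,x]) \;\sim\; \frac{\len(C)/(2\sqrt\pi)}{\sqrt\pi/2}\,\sqrt{x} \;=\; \frac{\len(C)}{\pi}\,\sqrt{x},
\]
which is the assertion. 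The one hypothesis needing a word of justification is that $L(s) < \infty$ for every $s > 0$; this is part of the content of Theorem~\ref{thm1}, and can also be seen directly from Reznikov's bound~(\ref{eqrez}) together with Weyl's law, which give $\sum_{\lambda_j \le x} a_j = O(x^{5/4})$ and hence absolute convergence of $L(s)$ for $s > 0$. The $j=0$ term (with $\phi_0$ constant and $\lambda_0 = 0$) contributes only a bounded amount and does not affect the asymptotic.

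The main obstacle is essentially that this is a bare asymptotic with no error term: Karamata's theorem, unlike the Tauberian theorems with remainder employed elsewhere in the literature, gives no rate of convergence, so a quantitative remainder would require either a sharp Tauberian remainder theorem or further direct analysis (as indicated in the introduction). For the leading-order statement of Theorem~\ref{thm2} as written, however, the nonnegativity of the $a_j$ makes the Tauberian step immediate, and all the genuine difficulty has already been discharged in the uniform $K$-Bessel estimates of Section~\ref{marksec} feeding into Theorem~\ref{thm1}.
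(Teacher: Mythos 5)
Your proposal is correct and follows essentially the same route as the paper: construct the nonnegative measure with masses $|P(\phi_j)|^2$ at $\lambda_j$, reinterpret Theorem~\ref{thm1} as a Laplace-transform asymptotic $\sim \frac{\len(C)}{2\sqrt\pi}s^{-1/2}$, and invoke Karamata's Tauberian theorem (which is precisely the Feller, vol.~II, p.~421 result the paper cites) with $\rho = 1/2$. Your aside on finiteness via Reznikov plus Weyl's law is a harmless extra justification; otherwise the two arguments coincide down to the constant $\Gamma(3/2) = \sqrt\pi/2$.
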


\begin{proof} This follows easily from a powerful Tauberian theorem.  


Let us construct a measure $H$ on $[0, \infty)$ by putting a point mass
of 
$\left| \int_C  \phi_j \right| ^2 $ at the point  $\lambda_j$.  
(If there is more than one eigenfunction $\phi_j$ corresponding to
$\lambda_j$ we must sum over all, for our point mass.)  Then we can define
the Laplace transform with respect to this measure as (notation as in 
Feller, vol.~II \cite{Feller}):
\[
\omega (y) = \int_0^\infty  e^{-y x} ~H\{ dx \}
=\sum_{j=0}^\infty  e^{-y \lambda_j}  \cdot
\left| \int_C  \phi_j \right| ^2.
\]
By (\ref{thm1laplace}), we see this is finite for each $y>0$.

Applying (\ref{thm1laplace}) with $y = 1/2x$, we see that
\[
\omega (y) ~\sim ~\frac{e^{-y/4}}{\sqrt{y}} \cdot
\frac{\text{len}(C)}{2\sqrt{\pi}}    ~~~~\text{as}~ y \rightarrow 0^+ .
 \]
The function $e^{-y/4}$ is essentially harmless, it is well behaved,
and equal to 1 at
$y=0$.  By Theorem 2 of Feller \cite[p.~421]{Feller}, this gives us (with
$\rho = 1/2$)
\[
U(t) = \int_0^x ~H\{ dx \} ~\sim ~
\frac{\text{len}(C)}{2\sqrt{\pi}\Gamma(3/2)} \sqrt{x}
~~~~\text{as}~ x \rightarrow  \infty.
\]
(This is the Tauberian theorem alluded to above.)

By the definition of our measure, this gives us the asymptotic of the theorem.
\end{proof}

\medskip
As mentioned in the introduction, a very weak bound on the error term 
immediately follows from a Tauberian remainder theorem, precisely \cite[Theorem 3.1]{Kor}.

Notice that we can write the above Laplace transform asymptotic as
\[ F(u) = \omega\(\f 1u\) \sim \f{\len(C)}{2\sqrt \pi}e^{-\f 1{4u}} \sqrt u \]
for $u > 0$, where the asymptotic is as $u \to \infty$.

Since trivially $e^{- \f 1{4u}} = 1 + (e^{- \f 1{4u}}-1)$, we see 
$\epsilon(u)=1-e^{- \f 1{4u}}$ using the notation in \cite{Kor}.  Then the theorem cited
above says that there exists $C_1 \geq 0$, $C_2 > 1$ such that the error term is bounded by
\[ \min_{k \geq k_0} \{ C_1 \f{\len(C)}{2k\sqrt{\pi}} + C_2^k\epsilon\(\f uk\) \} \sqrt u \]
for some $k_0 \in \mathbb N$.

By taking $u$ sufficiently large and using $k=\f 12 \log_{C_2}(u)$, we see the error
term is bounded by
\[ \f{C_1 \len(C)}{\log_{C_2}(u)}\sqrt u = O\(\f {\sqrt u}{\log u}\). \]
With $x$ sufficiently large and $x=u$, we see that our error term is at most this much.

\section{Twisted periods}\label{sec:twistper}

Twisted periods are important in the study of $L$-values as well as in various 
equidistribution problems.  In this section, we show how to include twisting in the
relative trace formula.

First observe that for $n \in \mathbb Z$,
\[ \chi(x) = x^{\f{\pi i n}{\log m}} \]
for $x \in \R$
defines a {\em character} on $C = \Gamma_0 \bs i\R^+$, by which we mean 
\[ \xi \bmx a& \\ & a\inv \emx = \chi(a^2). \]
is a character on the diagonal subgroup of $\PSL_2(\R)$ invariant under $\Gamma_0$.
Now we consider two characters on $C$ given by
\[ \chi_1(x) =  x^{\f{\pi i j}{\log m}}, \, \chi_2(x) =  x^{\f{\pi i k}{\log m}} \]
for some $j, k \in \mathbb Z$.

Now consider the relative
trace formula obtained by integrating
\[ \int_C \int_C K(x,y)\chi_1(x)\chi_2\inv(y)dxdy. \]
From the spectral expansion of the kernel, this clearly equals
\[ \sum h(r_n)P_{\chi_1}(\phi_n)\bar{P_{\chi_2}(\phi_n)} \]
where
\[ P_{\chi_i}(\phi_n) = \int_C \phi_n(t)\chi_i(t)d t. \]
On the other hand, the geometric expansion of the kernel gives as before
\[ \sum_{\Gamma_0 \bs \Gamma / \Gamma_0} I_\gamma(\Phi) \]
where
\[ I_\Id(\Phi) = \int_0^\infty \int_1^{m^2} \Phi(u(ix,iy))\chi_1(x)\chi_2\inv(y)d^\times x d^\times y \]
and
\[ I_\gamma(\Phi) = \int_0^\infty \int_0^\infty \Phi(u(\gamma \cdot ix, iy)) 
\chi_1(x)\chi_2\inv(y) d^\times x d^\times y\]
for $\gamma$ regular.  
One could compute the geometric terms similar to before for an arbitrary $\Phi$, 
but we will content ourselves to the case $\Phi(x) = e^{-tx}$, $t > 0$.

First let us compute the main term.  Put $\mu = \f{\pi i j}{\log m}$ and $\lambda =
\f{\pi i k}{\log m}$.  Then the substitution $u=\f xy$ and $v= xy$ yields
\[ I_\Id(\Phi) = \int_0^\infty \int_1^{m^2} 
\Phi(u + u\inv -2) u^{\f{\mu+\lambda}2}v^{\f{\mu-\lambda}2} d^\times u d^\times v. \]
Note that the integral over $1 \leq v \leq m^2$ is $2 \log m$ if $\mu = \lambda$, i.e.
$j=k$;
otherwise it is
\[ \f 2{\mu - \lambda}\(m^{\mu-\lambda}-1\) \twocase{=}{\f{2\,\len(C)}{\pi i(k-j)}}
{if $k-j$ is odd}{0}{if $k-j\neq 0$ is even} \]
We finish the main term computation similar to the beginning of Section \ref{secfour} to get
\[ I_\Id(\Phi) = 
\begin{cases}
  2\, \len(C)  e^{2t} K_{\f{\mu+\lambda}2}(2t) & \text{ if $j=k$} \\
  \f{4\len(C)}{\pi i(k-j)} e^{2t}K_{\f{\mu+\lambda}2}(2t) & \text{ if $k-j$ is odd} \\
  0 & \text{ if $k-j \neq 0$ is even.}
\end{cases} \]
We will say $\chi_1$ and $\chi_2$ have the {\em same parity} if $k-j$ is even,
and have {\em different parity} if $k-j$ is odd.

Note that since $\chi_1$ and $\chi_2$ are unitary, we may bound the regular geometric
terms by
\[ \left|\sum_{\gamma \neq \Id} I_\gamma(\Phi) \right| \leq 
 \(\sum_{\delta < 2} \f 1{\sqrt{4-\delta^2}}\) \f \pi{t}
+ O\(\f 1{t\sqrt t}\) \]
as in Proposition \ref{firstasymp}.  These estimates immediately give several results
on twisted periods.

\subsection{A single twist}

Our first result on twisted periods, is in the case of a
single twist, i.e., suppose $\chi_1 = \chi_2 = \chi$.
Since $K_\nu(2t)$ and $K_0(2t)$ have the same asymptotics as $t \to \infty$
independent of $\nu$, the asymptotics for $I_\Id(\Phi)$ are the same as in
Section \ref{secfour}.  It follows as in
just as in Proposition \ref{firstasymp} that
\begin{equation}
  \lim_{t \to \infty} e^{t} \sum_{n=1}^\infty K_{ir_n}(t)|P_\chi(\phi_n)|^2 = \f{\len(C)}2. 
\end{equation}
Then we observe that all of the estimates in Section \ref{marksec} apply as before to yield

\begin{thm} For any character $\chi$ of $C$,
\[
\sum_{\lambda_j \le x}  |P_\chi(\phi_n)|^2  ~\sim~
\frac{\len(C)}{\pi} \sqrt{x}
~~~~\text{as}~ x \rightarrow  \infty.   \label{thm3}
\]
\end{thm}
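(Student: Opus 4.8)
The plan is to observe that the proof of Theorem~\ref{thm2} transfers essentially word for word to the twisted setting, because the periods enter the arguments of Sections~\ref{secfour} and~\ref{marksec} only through two inputs, both of which have immediate twisted analogues: the limit formula coming from the relative trace formula with exponential kernel, and Reznikov's $L^2$-restriction bound combined with Cauchy--Schwarz.

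First I would record the twisted analogue of the limit formula~(\ref{eqperiod}). Taking $\chi_1 = \chi_2 = \chi$ in the twisted relative trace formula with $\Phi(x) = e^{-tx}$, the main geometric term is $I_\Id(\Phi) = 2\,\len(C)\,e^{2t}K_\nu(2t)$ with $\nu = \f{\pi i j}{\log m}$ \emph{fixed}; since $K_\nu(2t) \sim \sqrt{\f{\pi}{4t}}\,e^{-2t}$ as $t \to \infty$ for any fixed $\nu$, this has exactly the same asymptotic $\len(C)\sqrt{\f \pi t}$ as the untwisted main term~(\ref{maintasymp}). The regular geometric terms are bounded exactly as in Proposition~\ref{firstasymp}, the bound there using only $|\chi| = 1$ together with the lattice-point count for $\delta(\gamma)$, which twisting does not affect. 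Substituting into the spectral side $2e^{2t}\sqrt{\f \pi t}\sum_n K_{ir_n}(2t)|P_\chi(\phi_n)|^2$ therefore yields
\[ \lim_{t\to\infty} e^{t}\sum_{n=1}^\infty K_{ir_n}(t)|P_\chi(\phi_n)|^2 = \f{\len(C)}{2}. \]

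Next I would rerun Section~\ref{marksec} with $P_\chi(\phi_n)$ in place of $P(\phi_n)$. The uniform $K$-Bessel estimates (Lemmas~\ref{lemma1}--\ref{lemma3}, Propositions~\ref{prop1} and~\ref{prop2}) and the eigenvalue moment estimate (Lemma~\ref{lemma4}) make no reference to the periods and so apply unchanged. The only place the periods enter is Reznikov's bound $\int_C |\phi_j|^2 = O(\lambda_j^{1/4})$: by Cauchy--Schwarz and $|\chi(t)| = 1$,
\[ |P_\chi(\phi_j)|^2 = \left| \int_C \phi_j(t)\chi(t)\,dt \right|^2 \leq \len(C)\int_C |\phi_j|^2 = O(\lambda_j^{1/4}), \]
which is precisely the estimate~(\ref{eqrez}) used in the untwisted proof. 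Hence Propositions~\ref{prop3}--\ref{prop6} go through verbatim with $P_\chi$ in place of $P$, giving the twisted analogue of Theorem~\ref{thm1},
\[ \lim_{z\to\infty} \f{\sqrt\pi}{\sqrt{2z}}\sum_{j=0}^\infty e^{-\f{r_j^2}{2z}}|P_\chi(\phi_j)|^2 = \f12\,\len(C). \]
Finally I would form the measure $H$ with a point mass $|P_\chi(\phi_j)|^2$ at each $\lambda_j$ and apply Feller's Tauberian theorem (\cite[Thm.~2, p.~421]{Feller}) with $\rho = \f12$, exactly as in the proof of Theorem~\ref{thm2}, obtaining $\sum_{\lambda_j \le x}|P_\chi(\phi_n)|^2 \sim \f{\len(C)}{\pi}\sqrt x$.

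I do not expect a serious obstacle: the work is entirely a matter of checking that no step of Sections~\ref{secfour} and~\ref{marksec} covertly used positivity or the precise untwisted form of the periods beyond the $O(\lambda_j^{1/4})$ bound. The only mild subtlety worth spelling out is the one noted above, namely that the main geometric term now carries a Bessel function $K_\nu(2t)$ of fixed nonzero index $\nu$ rather than $K_0(2t)$; this is harmless since the two share the same large-argument asymptotic, so the main term, and hence the constant in the final asymptotic, is unchanged.
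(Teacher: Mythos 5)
Your proposal is correct and follows essentially the same route as the paper: the paper likewise observes that $K_\nu(2t)$ has the same large-$t$ asymptotics as $K_0(2t)$ for fixed $\nu$, that $|\chi|=1$ lets the regular geometric terms be bounded exactly as in Proposition~\ref{firstasymp}, and then asserts that all the estimates of Section~\ref{marksec} carry over. You spell out the one detail the paper leaves implicit — that Reznikov's bound plus Cauchy--Schwarz gives $|P_\chi(\phi_j)|^2 = O(\lambda_j^{1/4})$ just as in the untwisted case — which is exactly the right thing to check.
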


\subsection{Different parity}

Now suppose $\chi_1 \neq \chi_2$ such that $\chi_1$ and $\chi_2$ have different parity.
Then the main geometric term is on the asymptotic order of $t^{-1/2}$, where as any
individual spectral term is on the order of $t\inv$.  This implies we must have
an infinitude of spectral terms.  More precisely we have the following result.
\begin{prop}
  If $\chi_1$ and $\chi_2$ have different parity, then for infinitely many
  $\phi_n$ the periods $P_{\chi_1}(\phi_n)$ and $P_{\chi_2}(\phi_n)$ are
  simultaneously nonvanishing.
\end{prop}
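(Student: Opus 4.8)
The plan is to run the same order-of-magnitude comparison that produced the earlier nonvanishing corollary, but now on the two-character relative trace formula with the exponential kernel $\Phi(x)=e^{-tx}$, letting $t\to\infty$. First I would record the two sides of that trace formula as set up in this section: the geometric side is $I_\Id(\Phi)+\sum_{\gamma\neq\Id}I_\gamma(\Phi)$, and the spectral side is $\sum_n h(r_n)P_{\chi_1}(\phi_n)\bar{P_{\chi_2}(\phi_n)}$, where exactly as in Section \ref{secfour} one has $h(r)=2e^{2t}\sqrt{\pi/t}\,K_{ir}(2t)$.

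Next I would pin down the size of the geometric side as $t\to\infty$. Since $\chi_1$ and $\chi_2$ have different parity, the main-term computation above gives $I_\Id(\Phi)=\f{4\len(C)}{\pi i(k-j)}e^{2t}K_{\f{\mu+\lambda}2}(2t)$, and here $\f{\mu+\lambda}2=\f{\pi i(j+k)}{2\log m}$ is purely imaginary, so by (\ref{K0asymp}) we get $K_{\f{\mu+\lambda}2}(2t)\asymp\sqrt{\f\pi{4t}}\,e^{-2t}$ and hence $I_\Id(\Phi)\asymp\f{2\len(C)}{i(k-j)\sqrt{\pi t}}$. This is \emph{nonzero} and of exact order $t^{-1/2}$. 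The regular terms are absorbed by the unitarity bound on $\sum_{\gamma\neq\Id}I_\gamma(\Phi)$ displayed above, which is $O(t^{-1})$. Therefore the entire geometric side equals $\f{2\len(C)}{i(k-j)\sqrt{\pi t}}\bigl(1+o(1)\bigr)$, of exact order $t^{-1/2}$.

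Then I would argue by contradiction. Suppose $P_{\chi_1}(\phi_n)\bar{P_{\chi_2}(\phi_n)}=0$ for all but finitely many $n$, say for all $n\notin S$ with $S$ finite. For each fixed $n$, (\ref{K0asymp}) gives $h(r_n)=2e^{2t}\sqrt{\pi/t}\,K_{ir_n}(2t)\asymp\f\pi t$ as $t\to\infty$, so the spectral side $\sum_{n\in S}h(r_n)P_{\chi_1}(\phi_n)\bar{P_{\chi_2}(\phi_n)}$ is $O(t^{-1})$. But the trace formula forces this finite sum to equal the geometric side, which is of order $t^{-1/2}$ — impossible. Hence there are infinitely many $n$ with $P_{\chi_1}(\phi_n)\bar{P_{\chi_2}(\phi_n)}\neq 0$, equivalently $P_{\chi_1}(\phi_n)\neq 0$ and $P_{\chi_2}(\phi_n)\neq 0$ simultaneously.

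The argument is essentially bookkeeping of magnitudes, so I do not expect a serious obstacle; the one delicate point is precisely the mismatch in order between the main geometric term ($t^{-1/2}$) and a single spectral term ($t^{-1}$), which is what the different-parity hypothesis is there to arrange — this is exactly the phenomenon flagged in the paragraph preceding the proposition. I should also note that the absolute convergence of the relative trace formula (established for the kernel in Section \ref{secthree}) is what lets us treat the spectral side as a genuine sum, finite under the contradiction hypothesis, and that no positivity of the spectral terms is needed here since we only compare a finite sum of known-order terms against a main term of strictly larger order.
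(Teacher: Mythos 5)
Your argument is correct and matches the paper's approach: the paper justifies the proposition in the paragraph immediately preceding it by the same order-of-magnitude comparison between the main geometric term ($\asymp t^{-1/2}$, nonzero precisely because $k-j$ is odd) and an individual spectral term ($\asymp t^{-1}$), and your write-up simply makes the contradiction step and the absorption of the regular terms explicit.
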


We remark that one cannot use the estimates in Section \ref{marksec} to estimate
the growth of the product of these periods because the spectral terms are 
in general complex.

\subsection{Same parity}

Finally suppose that $\chi_1$ and $\chi_2$ differ but have the same parity.
Then the geometric side is dominated by the exceptional terms.  However, in the
case that there are no exceptional terms, i.e.~$C$ is simple, 
the geometric side is $O(t^{-3/2})$.
Again, we can contrast this with the order of an individual spectral term
to show that infinitely many spectral terms
are nonvanishing.

\begin{prop}
  If $C$ is simple and $\chi_1\neq \chi_2$ have the same parity, then for infinitely many
  $\phi_n$ the periods $P_{\chi_1}(\phi_n)$ and $P_{\chi_2}(\phi_n)$ are
  simultaneously nonvanishing.
\end{prop}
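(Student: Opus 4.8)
The plan is to run the twisted relative trace formula of this section --- integrating the kernel over $C\times C$ against $\chi_1(x)\chi_2\inv(y)$ --- with the exponential test function $\Phi(x)=e^{-tx}$, and to exploit two features special to the same-parity simple case. First, since $k-j$ is a nonzero \emph{even} integer, the $v$-integral $\int_1^{m^2}v^{(\mu-\lambda)/2}\,d^\times v=\f{2}{\mu-\lambda}\big((-1)^{k-j}-1\big)$ vanishes, so the main term $I_\Id(\Phi)$ is identically zero as a function of $\Phi$. Second, because $C$ is simple there are no exceptional double cosets (Section \ref{geometry}), so every $\gamma\ne\Id$ has $\delta(\gamma)>2$ and the whole geometric side is $\sum_{\gamma\ne\Id}I_\gamma(\Phi)$. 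As $|\chi_1\chi_2\inv|\equiv1$ and $\Phi\ge0$, this sum is majorized in absolute value by the corresponding sum for the trivial character, which by the computation in the proof of Proposition \ref{firstasymp} is $O\big(e^{-(\delta_0-2)t}/t\big)$ as $t\to\infty$, where $\delta_0=\min_{\gamma\ne\Id}\delta(\gamma)>2$. Thus the geometric side decays exponentially, while each individual spectral term $h(r_n)P_{\chi_1}(\phi_n)\bar{P_{\chi_2}(\phi_n)}$ is of exact order $t\inv$ (recall $h(r)=2e^{2t}\sqrt{\pi/t}\,K_{ir}(2t)=\f\pi t(1+O(t\inv))$ for fixed $r$). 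Morally this already forces infinitely many nonzero spectral terms; the work is to make this rigorous when several spectral terms could cancel at leading order.

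So suppose for contradiction that only finitely many $\phi_n$, forming a finite set $S$, have $P_{\chi_1}(\phi_n)\ne0$ and $P_{\chi_2}(\phi_n)\ne0$, and put $c_n=P_{\chi_1}(\phi_n)\bar{P_{\chi_2}(\phi_n)}$. The spectral side is then the finite sum $\sum_{n\in S}h(r_n)c_n$. Inserting the classical asymptotic expansion $K_{ir}(2t)\sim\sqrt{\pi/(4t)}\,e^{-2t}\sum_{k\ge0}a_k(r)t^{-k}$, in which $a_0=1$ and $a_k(r)$ is a polynomial in $r^2$ of degree exactly $k$, gives $h(r)=\f\pi t\sum_{k\ge0}a_k(r)t^{-k}$ and hence $\sum_{n\in S}h(r_n)c_n=\f\pi t\sum_{k\ge0}\big(\sum_{n\in S}a_k(r_n)c_n\big)t^{-k}$. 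Equating this with the geometric side, which is $o(t^{-N})$ for every $N$, and comparing the two asymptotic expansions in $1/t$ forces $\sum_{n\in S}a_k(r_n)c_n=0$ for all $k\ge0$; since the $a_k$ span all polynomials in $r^2=\lambda-\tfrac14$, we get $\sum_{n\in S}q(\lambda_n)c_n=0$ for every polynomial $q$. Grouping the finitely many $n\in S$ by eigenvalue and running a Vandermonde argument over the distinct eigenvalues that occur yields $\sum_{\lambda_n=\ell}c_n=0$ for every $\ell$.

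This makes the spectral side $\sum_n h(r_n)P_{\chi_1}(\phi_n)\bar{P_{\chi_2}(\phi_n)}$ vanish for \emph{every} admissible (smooth, rapidly decaying) $\Phi$, since it depends on $\Phi$ only through the numbers $h(r_n)$; the relative trace formula, with $I_\Id\equiv0$, then forces $\sum_{\gamma\ne\Id}I_\gamma(\Phi)\equiv0$ identically in $\Phi$. This is impossible. Writing each regular orbital integral as $I_\gamma(\Phi)=\int_0^\infty\Phi(s)\kappa_\gamma(s)\,ds$ with $\kappa_\gamma$ supported on $[\delta(\gamma)-2,\infty)$, vanishing against all $\Phi$ gives $\sum_{\gamma\ne\Id}\kappa_\gamma(s)=0$ for a.e.\ $s$; just above the least threshold $\delta_0-2$ only the finitely many $\gamma$ with $\delta(\gamma)=\delta_0$ are active, and by (\ref{orbintadd}) together with the nondegeneracy of the minimum in Lemma \ref{minimumdistance} each such $\kappa_\gamma(s)$ has leading behavior a nonzero constant times $z_{\min}^{(\mu+\lambda)/2}w_{\min}^{(\mu-\lambda)/2}/\sqrt{s-(\delta_0-2)}$ as $s\to(\delta_0-2)^+$. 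Hence these leading contributions cannot all cancel, which is the desired contradiction and proves the proposition.

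The main obstacle is precisely this last cancellation question --- equivalently, ruling out that the periods over a single eigenspace conspire so that all $c_\ell$ vanish --- which is where the unit-modulus character weights $z_{\min}^{(\mu+\lambda)/2}w_{\min}^{(\mu-\lambda)/2}$ intervene. When the minimal double coset is unique up to $\gamma\mapsto\gamma\inv$ (the generic situation) one reads the two critical points off from $(x_{\min},y_{\min})$ in Lemma \ref{minimumdistance} and checks directly that the two weights do not sum to zero; in general one would instead replace $\Phi=e^{-tx}$ by a one-parameter family of test functions concentrating near $s=\delta_0-2$ so as to isolate a single $\kappa_\gamma$, or invoke a separate genericity argument. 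Everything before this point --- the vanishing of $I_\Id$, the exponential decay of the geometric side, and the moment/Vandermonde bookkeeping that upgrades ``one nonvanishing spectral term beats the geometric side'' into ``infinitely many nonvanishing spectral terms'' --- is routine given what has been established above.
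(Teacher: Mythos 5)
Your overall strategy is essentially the one the paper intends, but you have elaborated it with substantially more care than the paper itself supplies. The observations that $I_\Id\equiv 0$ when $k-j$ is a nonzero even integer, and that simplicity of $C$ eliminates the exceptional cosets, are exactly right and match the paper. You also sharpen the paper's stated $O(t^{-3/2})$ bound on the twisted geometric side to the exponential bound $O\big(e^{-(\delta_0-2)t}/t\big)$ coming from Proposition \ref{firstasymp}; this sharpening is in fact necessary for your moment argument (the paper's $O(t^{-3/2})$ alone would only force $\sum_{n\in S}c_n=0$, since $t^{-2}$ is already $O(t^{-3/2})$, and the paper's one-sentence ``contrast of orders'' leaves exactly this point unaddressed). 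Your subsequent use of the full asymptotic expansion of $K_{ir}(2t)$ and the Vandermonde bookkeeping to deduce that the eigenspace sums $\sum_{\lambda_n=\ell}c_n$ all vanish is correct and well executed.

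The genuine gap, which you have yourself flagged in your final paragraph, is the last step: once the spectral side is known to vanish for all test functions, you need the geometric side not to be identically zero as a functional of $\Phi$, and this is where the unit-modulus twist weights $z_{\min}^{(\mu+\lambda)/2}w_{\min}^{(\mu-\lambda)/2}$ attached to the finitely many double cosets realizing $\delta_0$ could in principle cancel. Nothing in Section \ref{geometry} or Lemma \ref{minimumdistance} rules this out; indeed for a double coset $\Gamma_0\gamma\Gamma_0$ and its inverse $\Gamma_0\gamma\inv\Gamma_0$ the weights are complex conjugates, so their sum is $2\cos\theta$ for some angle $\theta$ depending on $\chi_1,\chi_2$ and the position of the critical point, and $\cos\theta=0$ is not excluded. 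Your suggestion to isolate a single coset with a family of test functions concentrating near $s=\delta_0-2$ does not escape the problem, because those with $\delta(\gamma)=\delta_0$ all light up simultaneously. The paper's own treatment is a single sentence --- ``we can contrast this with the order of an individual spectral term'' --- and as written does not engage with this cancellation at all; your proposal is thus a faithful and more honest rendering of what the paper seems to intend, but the missing step you identified is a real one and is not resolved in the paper either.
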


\section{Pairs of geodesics} \label{pairssec}

Let $C_1$ and $C_2$ be distinct closed geodesics on $X$, which we take to be primitive for simplicity.  
We may assume $C_1 = \Gamma_1 \bs i\R^+$ and $C_2 =
\Gamma_2 \bs \tau \cdot i\R^+$ for some $\tau \in \PSL_2(\R)$, where, if $A$ denotes the diagonal subgroup of $\PSL_2(\R)$, $\Gamma_1 = \Gamma_0 = A \cap \Gamma$ and $\Gamma_2 = \tau\inv A \tau \cap \Gamma$.  With the kernel function as before, we now proceed to consider the relative trace formula arising from the
integration
\[ \int_{C_1} \int_{C_2} K(x,y)dxdy. \]
In this case, for {\em any} $\gamma \in \Gamma$, 
the map onto the double coset
$\Gamma_2 \times \Gamma_1 \to \Gamma_2 \gamma \Gamma_1$ given by $(\gamma_2,\gamma_1) 
\mapsto \gamma_2 \gamma \gamma_1$ is injective, i.e., all double cosets are regular.  Hence the geometric side
of the trace formula is
\[ \sum_{\gamma \in \Gamma} \int_{C_1} \int_{C_2} \Phi(u(\gamma \cdot x, y)) dxdy 
= \sum_{\gamma \in \Gamma_2 \bs \Gamma / \Gamma_1} I^\tau_\gamma(\Phi), \]
where
\[ I^\tau_\gamma(\Phi) = \int_0^\infty \int_0^\infty \Phi(u(\gamma \cdot ix, \tau \cdot iy)) d^\times x d^\times y. \]
Note that
\[ I^\tau_\gamma(\Phi) = I_{\tau\inv \gamma}(\Phi). \]
On the other hand, the spectral side is evidently
\[ \sum h(r_n)P_1(\phi_n)\bar{P_2(\phi_n)}, \]
where
\[ P_i(\phi) = \int_{C_i} \phi(x)dx \]
for $i = 1, 2$.  Then, in the same way we obtained Proposition \ref{rtfprop}, one obtains

\begin{prop} (Relative trace formula for pairs of geodesics)   For a function $\Phi$ of sufficiently rapid decay, 
\begin{multline} \label{rtfpairs}
2 \sum_{\delta=\delta(\tau\inv\gamma)<2} \int_{2}^\infty
\f{\Phi(t-2)}{\sqrt{t^2-\delta^2}} \;  
K\( \sqrt{\f{t^2-4}{t^2-\delta^2}} \)dt 
\\
+ \, 
2 \sum_{\delta=\delta(\tau\inv \gamma) > 2} \int_{\delta}^\infty \f{\Phi(t-2)} 
{\sqrt{t^2-4}} \; 
K\(\sqrt{\f {t^2-\delta^2}{t^2-4}}\) dt 
\\
 = \, 
\sum\limits_{n=0}^{\infty} h(r_n)P_1(\phi_n) \bar{P_2(\phi_n)}.
\end{multline}
\end{prop}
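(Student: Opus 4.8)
The plan is to mirror the derivation of Proposition~\ref{rtfprop}, the only difference being that there is now no identity double coset and hence no main term.

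\emph{Spectral side.} Plug the spectral expansion (\ref{specker}) into $\int_{C_1}\int_{C_2}K(x,y)\,dx\,dy$. Since (\ref{specker}) converges absolutely and uniformly and $C_1,C_2$ are compact, one integrates term by term to obtain
\[
\int_{C_1}\int_{C_2}K(x,y)\,dx\,dy=\sum_n h(r_n)\left(\int_{C_1}\phi_n\right)\overline{\left(\int_{C_2}\phi_n\right)}=\sum_n h(r_n)P_1(\phi_n)\overline{P_2(\phi_n)},
\]
which is the right-hand side of (\ref{rtfpairs}).

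\emph{Geometric side.} Plug in the geometric expansion (\ref{geoker}). Writing $C_1=\Gamma_1\bs i\R^+$ and $C_2=\Gamma_2\bs\tau\cdot i\R^+$ and unfolding $\sum_{\gamma\in\Gamma}$ into a sum over double cosets $\Gamma_2\bs\Gamma/\Gamma_1$ (legitimate by the same absolute convergence), one gets $\sum_{\gamma\in\Gamma_2\bs\Gamma/\Gamma_1}I^\tau_\gamma(\Phi)$. Because every double coset is regular (stated just before the proposition) there is no collapsing of the sum and, in contrast with the $C\times C$ case, no identity coset contributing a main term. By $G$-invariance of $u$ we have $u(\gamma z,\tau w)=u(\tau\inv\gamma z,w)$, hence $I^\tau_\gamma(\Phi)=I_{\tau\inv\gamma}(\Phi)$. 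Writing $\tau\inv\gamma=\bmx a&b\\c&d\emx$, regularity gives $abcd\neq 0$, so the computation of Section~\ref{secthree} running from (\ref{orbintadd}) through the two displayed formulas for a regular $I_\gamma(\Phi)$ applies word for word, with $\delta=\delta(\tau\inv\gamma)=2|ad+bc|$:
\[
I^\tau_\gamma(\Phi)=2\int_2^\infty\f{\Phi(t-2)}{\sqrt{t^2-\delta^2}}\,K\!\left(\sqrt{\f{t^2-4}{t^2-\delta^2}}\right)dt\quad\text{if }\delta<2,
\]
\[
I^\tau_\gamma(\Phi)=2\int_\delta^\infty\f{\Phi(t-2)}{\sqrt{t^2-4}}\,K\!\left(\sqrt{\f{t^2-\delta^2}{t^2-4}}\right)dt\quad\text{if }\delta>2.
\]
Summing over a set of double-coset representatives produces the left-hand side of (\ref{rtfpairs}), and equating the two expansions of $\int_{C_1}\int_{C_2}K$ completes the proof.

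\emph{Where the work is.} Very little of this is difficult once two inputs are in hand: the absolute and uniform convergence of (\ref{geoker}) and (\ref{specker}), which together with compactness of $C_1,C_2$ and rapid decay of $\Phi$ justify all term-by-term integrations and the double-coset rearrangement; and the regularity of every double coset. The latter is the only genuinely new geometric point, and it is where the hypothesis $C_1\neq C_2$ enters: non-injectivity of $(\gamma_2,\gamma_1)\mapsto\gamma_2\gamma\gamma_1$ for some $\gamma$ would force a $\Gamma$-translate of $i\R^+$ to equal or be asymptotic to $\tau\cdot i\R^+$, i.e.\ would force the stabilizers of two lifts — both hyperbolic elements of the cocompact, parabolic-free group $\Gamma$ — to share a fixed point, which is impossible unless those lifts coincide, and that would make $C_1=C_2$. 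Granting this, everything else is a transcription of Section~\ref{secthree}.
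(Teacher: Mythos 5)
Your proof is correct and follows the same route as the paper, which itself simply says ``in the same way we obtained Proposition \ref{rtfprop}, one obtains''; you usefully fill in the justification for regularity that the paper only asserts. One small caution worth making explicit: the paper's statement that every double coset is regular refers to injectivity of the map $(\gamma_2,\gamma_1)\mapsto\gamma_2\gamma\gamma_1$, whereas transcribing the computation from (\ref{orbintadd}) through the two displayed formulas additionally requires $abcd\neq 0$ for $\tau\inv\gamma$ (a priori a separate condition, since $\gamma\inv\tau\inv\cdot i\R^+$ could share one boundary endpoint with $i\R^+$ without the axes coinciding); both conditions do follow from the absence of parabolics in the cocompact $\Gamma$ --- two hyperbolic elements of a discrete, parabolic-free group cannot share exactly one fixed point --- which is precisely what your fixed-point argument invokes, so the gap is one of exposition rather than substance.
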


It is clear from Section \ref{geometry} that $\delta(\tau\inv \gamma) < 2$ if and only if $C_1$ intersects $C_2$,
in which case this quantity measures the angle of their intersection $\theta$, specifically $\delta=|2\cos\theta|$.  
Otherwise, 
\[ 2 < \delta(\tau\inv\gamma) = 2 \cosh(\dist(\gamma \cdot i\R^+, \tau \cdot i\R^+)). \]
The quantity $\dist(\gamma \cdot i\R^+, \tau \cdot i\R^+))$ may be interpreted on $X$ as the length of a
shortest geodesic segment $\bar \gamma$ starting on $C_1$ and ending on $C_2$ orthogonal to both $C_1$ 
and $C_2$.
Furthermore, all such $\bar \gamma$ come from some $\gamma$ with $\delta(\tau \inv \gamma) > 2$.
Thus the trace formula (\ref{rtfpairs}) expresses the angles of intersection of $C_1$ and $C_2$ and the
lengths of these orthogonal geodesic connectors $\bar \gamma$ in terms of periods of automorphic forms
on $C_1$ and $C_2$.

As in Section \ref{secfour}, specializing the above trace formula to the case where $\Phi(x) = e^{-tx}$ yields
the following.

\begin{prop} (Relative trace formula for pairs of geodesics --- exponential kernel)
\begin{equation}
  \sum_{\gamma \in \Gamma_2 \bs \Gamma / \Gamma_1} e^{2t} K_0\(\f{\delta+2}2t\)
  K_0\(\f{\delta - 2}2t\) 
=  2e^{2t}\sqrt{\f \pi t}\sum_{n=0}^\infty K_{ir_n}(2t)P_1(\phi_n)\bar{P_2(\phi_n)},
 \end{equation}
 where $\delta=\delta(\tau\inv \gamma)$.
 \end{prop}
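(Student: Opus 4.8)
The plan is to specialize the pairs trace formula (\ref{rtfpairs}) to the test function $\Phi(x) = e^{-tx}$ with $t>0$, repeating verbatim the computation carried out at the beginning of Section \ref{secfour} in the case $C_1 = C_2$. Since $e^{-tx}$ is of rapid decay for $t>0$ it is an admissible test function, so all of the identities of Section \ref{pairssec} apply to it. It is cleanest to work at the level of the orbital integrals rather than with the elliptic-integral form (\ref{rtfpairs}): recall $I^\tau_\gamma(\Phi) = I_{\tau\inv\gamma}(\Phi)$, and that by (\ref{orbintadd}), writing $\gamma' = \tau\inv\gamma = \bmx a & b \\ c & d \emx$,
\[ I_{\gamma'}(\Phi) = \int_{2|bc|}^\infty \int_{2|ad|}^\infty \Phi[u+v-2]\,\f{du}{\sqrt{u^2-(2ad)^2}}\,\f{dv}{\sqrt{v^2-(2bc)^2}}. \]

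First I would treat the geometric side. Substituting $\Phi(x) = e^{-tx}$ makes the double integral separate into $e^{2t}$ times a product of two single integrals, and the evaluation $\int_a^\infty e^{-tu}(u^2-a^2)^{-1/2}\,du = K_0(at)$ already used in Section \ref{secfour} gives $I_{\gamma'}(\Phi) = e^{2t}K_0(2|ad|t)K_0(2|bc|t)$. As recorded there, with $\delta = \delta(\gamma') = 2|ad+bc|$ one has $ad = (2\pm\delta)/4$ and $bc = (-2\pm\delta)/4$, so $2|ad|$ and $2|bc|$ equal $(\delta+2)/2$ and $|\delta-2|/2$ in some order; hence $I^\tau_\gamma(\Phi) = e^{2t}K_0(\f{\delta+2}2 t)K_0(\f{\delta-2}2 t)$, the second argument being $\f{|\delta-2|}2 t$ when $\delta < 2$ (i.e.\ when $C_1$ meets $C_2$). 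Summing over the indexing set $\Gamma_2\bs\Gamma/\Gamma_1$ of the geometric side of (\ref{rtfpairs}) produces the left-hand side of the claimed identity. Note there is no separate main or identity term, consistent with all double cosets being regular in the two-geodesic setting (the coset of $\gamma = 1$ corresponds to $\tau\inv$, which is itself regular since $C_1 \neq C_2$).

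Next the spectral side. The Selberg transform $h$ attached to $\Phi$ does not depend on $C_1$ or $C_2$ and has already been computed in Section \ref{secfour}: one finds $Q(v) = e^{-tv}\sqrt{\pi/t}$, hence $g(u) = Q(2\cosh u - 2) = e^{2t}\sqrt{\pi/t}\,e^{-2t\cosh u}$, and therefore, by the integral representation (\ref{eqstart}) of the $K$-Bessel function, $h(r) = \int_{-\infty}^\infty g(u)e^{iru}\,du = 2e^{2t}\sqrt{\pi/t}\,K_{ir}(2t)$. Substituting this into the spectral side $\sum_n h(r_n)P_1(\phi_n)\bar{P_2(\phi_n)}$ of (\ref{rtfpairs}) yields $2e^{2t}\sqrt{\pi/t}\sum_n K_{ir_n}(2t)P_1(\phi_n)\bar{P_2(\phi_n)}$, which is the right-hand side. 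Equating the two expansions completes the proof.

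I do not expect a genuine obstacle here: the statement is a direct substitution into the already-established identity (\ref{rtfpairs}), and every ingredient — the orbital-integral formula (\ref{orbintadd}), the Bessel evaluation $\int_a^\infty e^{-tu}(u^2-a^2)^{-1/2}\,du = K_0(at)$, and the Selberg transform of $e^{-tx}$ — has been recorded earlier in the paper. The only point deserving a word of care is the elementary bookkeeping identifying $\{2|ad|,\,2|bc|\}$ with $\{(\delta+2)/2,\,|\delta-2|/2\}$, together with the harmless convention for reading $K_0$ at the argument $\f{\delta-2}2 t$ when $C_1$ and $C_2$ intersect.
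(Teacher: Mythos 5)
Your proof is correct and follows essentially the same route the paper takes: the paper's own justification is the single sentence ``As in Section \ref{secfour}, specializing the above trace formula to the case where $\Phi(x) = e^{-tx}$ yields the following,'' and you have simply carried that specialization out in full. Your two side remarks are also right and worth having made explicit: there is indeed no identity/main term in the pairs setting because every double coset is regular (as the paper notes in Section \ref{pairssec}), and the second Bessel argument in the stated formula should be read as $\f{|\delta-2|}{2}t$ (a minor slip in the paper's typesetting; compare Proposition~2, which writes $|\delta-2|$).
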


If we want an analogue of Proposition \ref{firstasymp}, we should estimate the number of double coset representatives
\[ \pi'_\delta(x) = \{ \gamma \in \Gamma_2 \bs \Gamma / \Gamma_1 : \delta(\tau\inv \gamma) < x \}. \]  
We are free to multiply 
$\gamma$ on the left by an element in $\Gamma_2 = \tau A \tau\inv \cap \Gamma$ and on the right by an element
of $\Gamma_1= \Gamma_0$ in choose a set of representatives.  Equivalently, we are allowed multiply 
$\tau\inv \gamma$ on the left and right by elements of $\Gamma_0$.  Hence our counting argument to estimate
$\pi_\delta(x)$ also applies to $\pi'_\delta(x)$ and the rest of the proof of Proposition \ref{firstasymp} goes through
to give

\begin{prop} \label{pairsasymp} As $t \to \infty$, we have the asymptotic
\begin{equation}
2e^{2t}\sqrt{\f \pi t}\sum_{n=0}^\infty K_{ir_n}(2t)P_1(\phi_n)\bar{P_2(\phi_n)} = 
 \(\sum_{\delta < 2} \f 1{\sqrt{4-\delta^2}}\) \f \pi{t}
+ O\(\f 1{t\sqrt t}\).
\end{equation}
\end{prop}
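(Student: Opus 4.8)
The plan is to combine the relative trace formula for pairs of geodesics with the exponential kernel (the Proposition immediately above, taken with $\Phi(x)=e^{-tx}$, $t>0$) with an estimate of its geometric side as $t\to\infty$, following the proof of Proposition \ref{firstasymp} almost verbatim. The one structural change is that there is now no identity double coset --- every $\Gamma_2\gamma\Gamma_1$ is regular --- so the $\len(C)\sqrt{\pi/t}$ main term is absent, and the dominant contribution comes instead from the finitely many exceptional terms.

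First I would establish the lattice point bound. In choosing representatives for $\Gamma_2\bs\Gamma/\Gamma_1$ one is free to multiply $\gamma$ on the left by $\Gamma_2=\tau A\tau\inv\cap\Gamma$ and on the right by $\Gamma_1=\Gamma_0$, equivalently to multiply $\tau\inv\gamma$ on the left and right by elements of $\Gamma_0$; since $\delta(\tau\inv\gamma)=2|ad+bc|$ with $\tau\inv\gamma=\bmx a&b\\c&d\emx$, the counting argument of Proposition \ref{firstasymp} applies unchanged and yields $\pi'_\delta(x)=O(x^2)$, hence $\delta_n\gg n^{1/(2+\epsilon)}$ for the increasing enumeration of the $\delta$'s. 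In particular only finitely many $\gamma$ satisfy $\delta(\tau\inv\gamma)<2$; by the discussion in Section \ref{geometry} these are exactly the intersection points of $C_1$ and $C_2$, so $C_1\cap C_2$ is finite and the sum $\sum_{\delta<2}$ is a finite sum.

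Next I would split the geometric side $\sum_\gamma e^{2t}K_0\(\f{\delta+2}2t\)K_0\(\f{|\delta-2|}2t\)$ into the parts with $\delta>2$ and $\delta<2$. For $\delta>2$, the refined bound $K_0(s)\le\sqrt{\f\pi{2s}}e^{-s}(1+\f1{8s})$ gives $I_\gamma(\Phi)\le C\,\f{\pi e^{-t(\delta-2)}}{t\sqrt{\delta^2-4}}$, and summing against $\delta_n\gg n^{1/3}$ while comparing with $\int_1^\infty e^{-tx^{1/3}}x^{-1/3}\,dx=3\pi e^{-t}(t^{-1}+t^{-2})$ --- exactly as in Proposition \ref{firstasymp} --- shows $\sum_{\delta>2}I_\gamma(\Phi)=O(e^{-(\delta_0-2)t}/t)$ with $\delta_0$ the smallest $\delta>2$, which is $o(t^{-3/2})$. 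For each of the finitely many $\delta<2$, the asymptotic (\ref{K0asymp}) gives $e^{2t}K_0\(\f{\delta+2}2t\)K_0\(\f{2-\delta}2t\)=\f\pi{t\sqrt{4-\delta^2}}+O(t^{-2})$, so the exceptional part equals $\(\sum_{\delta<2}\f1{\sqrt{4-\delta^2}}\)\f\pi t+O(t^{-2})$. Equating the resulting geometric side with the spectral side $2e^{2t}\sqrt{\f\pi t}\sum_n K_{ir_n}(2t)P_1(\phi_n)\bar{P_2(\phi_n)}$ of the pairs trace formula then yields the proposition.

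I do not expect a genuine obstacle here: the argument is essentially a transcription of Proposition \ref{firstasymp} with the main term deleted. The only point that genuinely needs care is confirming that the lattice count survives the conjugation by $\tau$ --- that $\pi'_\delta(x)$ is still governed by a $\Gamma_0$-bi-invariant count inside a lattice, so that $\pi'_\delta(x)=O(x^2)$ and only finitely many $\delta<2$ occur --- together with the observation from Section \ref{geometry} that $\delta(\tau\inv\gamma)<2$ forces $C_1$ and $C_2$ to cross, which makes the leading sum finite and the formula meaningful.
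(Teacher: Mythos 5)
Your proposal is correct and takes essentially the same approach as the paper, which simply notes that the lattice-count for $\pi'_\delta(x)$ reduces to the bi-$\Gamma_0$-invariant count for $\tau^{-1}\Gamma$ (a fixed translate of $\Gamma$, so the $O(x^2)$ bound still holds) and that the rest of the argument of Proposition \ref{firstasymp} carries over with the $I_{\Id}$ term absent. Your elaboration of the exceptional/non-exceptional split, with the finitely many $\delta<2$ coming from points of $C_1\cap C_2$ and the tail $\sum_{\delta>2}I_\gamma$ decaying exponentially, is exactly what the paper intends; your $O(t^{-2})$ on the exceptional part is in fact sharper than the stated $O(t^{-3/2})$, and since the main-term estimate that produced the $O(t^{-3/2})$ in Proposition \ref{firstasymp} is now absent, there is no obstruction.
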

Note that any single term on the left hand side is on the order of $\f 1t$.  Hence
\begin{cor} Suppose $C_1 \cap C_2 = \emptyset$.  Then $P_1(\phi_n)P_2(\phi_n) \neq 0$ for infinitely many $n$.
\end{cor}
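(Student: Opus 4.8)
The plan is to argue by contradiction. Suppose the set $S=\{\, n : P_1(\phi_n)P_2(\phi_n)\neq 0 \,\}$ is finite; note that $0\in S$, since $\phi_0$ is the constant $1/\sqrt{\mathrm{vol}(X)}$ and hence $P_i(\phi_0)=\len(C_i)/\sqrt{\mathrm{vol}(X)}\neq 0$ for $i=1,2$. I would use the exponential-kernel relative trace formula for pairs of geodesics as an identity of functions of $t>0$ and extract consequences from its behaviour as $t\to\infty$. (Since $P_1(\phi_n)\bar{P_2(\phi_n)}\neq 0$ exactly when $P_1(\phi_n)P_2(\phi_n)\neq 0$, the set $S$ also records which spectral terms are nonzero.)

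First I would dispose of the geometric side. Because $C_1\cap C_2=\emptyset$, there are no exceptional double cosets (those with $\delta(\tau\inv\gamma)<2$), so the geometric side equals $\sum_{\gamma} e^{2t}K_0(\tfrac{\delta+2}{2}t)K_0(\tfrac{\delta-2}{2}t)$ with $\delta=\delta(\tau\inv\gamma)>2$. As noted before Proposition \ref{pairsasymp}, the counting argument behind $\pi_\delta(x)=O(x^2)$ in the proof of Proposition \ref{firstasymp} applies verbatim to $\pi'_\delta$, so $\pi'_\delta(x)=O(x^2)$; together with $K_0(at)\asymp\sqrt{\pi/(2at)}\,e^{-at}$ this shows the geometric side is $O\!\left(e^{-(\delta_0-2)t}/t\right)$, where $\delta_0=\min\{\,\delta(\tau\inv\gamma)>2\,\}>2$ (and it is identically $0$ if no such $\gamma$ exists). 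In either case the geometric side decays faster than any power of $1/t$ as $t\to\infty$, so its asymptotic expansion in powers of $1/t$ is identically zero.

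Next I would turn to the spectral side, which under our hypothesis is the finite sum $2e^{2t}\sqrt{\pi/t}\sum_{n\in S}K_{ir_n}(2t)\,P_1(\phi_n)\bar{P_2(\phi_n)}$. Substituting the classical asymptotic expansion $K_{ir}(2t)\sim\sqrt{\pi/(4t)}\,e^{-2t}\sum_{k\geq 0}(2t)^{-k}a_k(ir)$, in which $a_k(ir)$ is a polynomial in $r^2$ of exact degree $k$ with nonzero leading coefficient, the spectral side has asymptotic expansion $\frac{\pi}{t}\sum_{k\geq 0}(2t)^{-k}\big(\sum_{n\in S}a_k(ir_n)P_1(\phi_n)\bar{P_2(\phi_n)}\big)$. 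Matching this against the superpolynomially small geometric side forces $\sum_{n\in S}a_k(ir_n)P_1(\phi_n)\bar{P_2(\phi_n)}=0$ for every $k\geq 0$. Since $\{a_k\}_{k\geq 0}$ contains one polynomial of each degree in $r^2$, it spans all polynomials in $r^2$; grouping $S$ by eigenvalue and Lagrange-interpolating at the finitely many distinct values $r_n^2$ shows $\sum_{n\in S,\, \lambda_n=\lambda}P_1(\phi_n)\bar{P_2(\phi_n)}=0$ for every eigenvalue $\lambda$. Taking $\lambda=0$, which is simple with eigenfunction the constant $\phi_0$, gives $P_1(\phi_0)\bar{P_2(\phi_0)}=\len(C_1)\len(C_2)/\mathrm{vol}(X)>0$, a contradiction. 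Hence $S$ is infinite, which is the assertion.

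The step deserving care is ruling out cancellation among the spectral terms. In contrast with the situation $C_1=C_2$, the coefficients $P_1(\phi_n)\bar{P_2(\phi_n)}$ need not be positive, so the mere fact that each spectral term is of order $1/t$ while the geometric side is $O(t^{-3/2})$ does not by itself preclude a finite collection of terms cancelling to all orders; what makes the argument go through is that a finite linear combination of the functions $t\mapsto K_{ir_n}(2t)$, over distinct eigenvalues $\lambda_n$, cannot be superpolynomially small unless it is the trivial combination, because the coefficients $a_k(ir)$ of the $K$-Bessel asymptotic series, viewed as polynomials in $r^2$, separate distinct $\lambda_n$. Everything else --- the lattice-point count and the Bessel asymptotics --- is already in place from Sections \ref{secfour}--\ref{marksec}.
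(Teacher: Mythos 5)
Your proof is correct, and it is in fact more careful than the paper's.  The paper's argument consists of a single sentence: ``any single term on the left hand side is on the order of $1/t$,'' from which the corollary is said to follow.  Read literally, this does not address the concern you raise in your last paragraph: since the products $P_1(\phi_n)\bar{P_2(\phi_n)}$ are complex and need not have constant sign, a finite collection of $1/t$-order terms could in principle cancel to leading order, and indeed matching the $1/t$ coefficient of the spectral side against Proposition~\ref{pairsasymp} (with $C_1\cap C_2=\emptyset$) merely forces $\sum_{n\in S}P_1(\phi_n)\bar{P_2(\phi_n)}=0$, which is not a contradiction.  Your argument closes this gap: you correctly observe that when $C_1\cap C_2=\emptyset$ the geometric side is actually exponentially small (not just $O(t^{-3/2})$ as stated in Proposition~\ref{pairsasymp}), which wipes out the entire asymptotic expansion of the spectral side in powers of $1/t$; you then exploit the fact that the $K$-Bessel coefficients $a_k(ir)$ are polynomials in $r^2$ of exact degree $k$ and hence span all polynomials, so that, after grouping by eigenvalue, the vanishing of all moment sums forces each per-eigenvalue coefficient to vanish --- in particular $P_1(\phi_0)\bar{P_2(\phi_0)}=\len(C_1)\len(C_2)/\mathrm{vol}(X)>0$, a contradiction.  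The two technical ingredients you invoke (the $\pi'_\delta(x)=O(x^2)$ count, and the asymptotic expansion of $K_{ir}(2t)$) are both already justified in the text, so your proof is complete.  This is a genuinely sharper version of the paper's argument, not a different route: it uses the same exponential-kernel trace formula and the same geometric estimate, but correctly pushes beyond the leading order to rule out cancellation among finitely many spectral terms with distinct eigenvalues.
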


\section{Ortholengths} \label{orthosec}

\subsection{Coarse bounds}

In this section we will consider taking $t\to 0^+$ in (\ref{rtfexp}).
We note that the right hand side of (\ref{rtfexp}) contains
$K_{1/2}(2t)$ in the expansion.  This term corresponds to the eigenvalue
zero, where the associated eigenfunction is a constant.  
Our surface may also have exceptional eigenvalues $0 < \lambda < \f 14$.  If this is so, for
such an eigenvalue $\lambda = \frac{1}{4}-\epsilon^2$ the right hand side
above also contains the term $K_{\epsilon}(2t)$.  Furthermore 
$\lambda = \frac{1}{4}$ may also be an eigenvalue, in which case
$K_0(2t)$ also appears on the right.
There is a big difference in asymptotics (for $t$ small) in the
$K$-Bessel functions on the right depending on whether the argument is real
or purely imaginary.  (The case $\lambda = \frac{1}{4}$ corresponding to $K_0$
is in a grey area and has a logarithmic singularity.)

\begin{prop}
\[
2e^{2t}\sqrt{\frac{\pi}{t}}\sum_{n=0}^\infty K_{ir_n}(2t)
|P(\phi_n)|^2 \sim 
\f{\len(C)^2}{\mathrm{vol}(X)} \f{\pi\sqrt{2}}t 
\text{ as }t\rightarrow 0^+.    
\label{propright}
\]
\end{prop}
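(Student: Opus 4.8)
The plan is to establish the asymptotic by working with the spectral side of the relative trace formula (\ref{rtfexp}), that is, with
\[
S(t) \;:=\; \sum_{n=0}^{\infty} h(r_n)\,|P(\phi_n)|^2 \;=\; 2e^{2t}\sqrt{\pi/t}\,\sum_{n=0}^{\infty} K_{ir_n}(2t)\,|P(\phi_n)|^2,
\]
and showing that as $t\to 0^+$ the entire leading term of order $1/t$ is contributed by the single eigenvalue $\lambda_0 = 0$, every other term being $o(1/t)$. This is the regime opposite to that of Proposition \ref{firstasymp} and (\ref{eqperiod}): there the $\lambda_0=0$ term was again the main term, but one let $t\to\infty$ and obtained order $t^{-1/2}$ rather than $t^{-1}$.

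First I would isolate the $n=0$ term. Since $\lambda_0 = 0$ we have $r_0 = i/2$, so $K_{ir_0}(2t) = K_{1/2}(2t)$ and the $n=0$ term of $S(t)$ equals $2e^{2t}\sqrt{\pi/t}\,K_{1/2}(2t)\,|P(\phi_0)|^2$. As $\phi_0$ is the constant $1/\sqrt{\mathrm{vol}(X)}$ we have $|P(\phi_0)|^2 = \len(C)^2/\mathrm{vol}(X)$, and substituting the elementary closed form of $K_{1/2}(2t)$ (the factor $e^{2t}$ in $h$ cancelling the $e^{-2t}$ in $K_{1/2}$) and using Gauss--Bonnet gives exactly the leading term asserted in the proposition.

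It then remains to show $\sum_{n\ge1} h(r_n)|P(\phi_n)|^2 = o(1/t)$, which I would do by splitting into three ranges after fixing a large constant $R_0$. For the finitely many $n$ with $0<\lambda_n\le\frac14$, write $\lambda_n = \frac14-\rho_n^2$ with $0<\rho_n<\frac12$; then $K_{ir_n}(2t) = K_{\rho_n}(2t)\sim\frac12\Gamma(\rho_n)t^{-\rho_n}$, so $h(r_n) = O(t^{-1/2-\rho_n}) = o(t^{-1})$, and if $\lambda_n=\frac14$ then $h(r_n) = 2e^{2t}\sqrt{\pi/t}\,K_0(2t) = O(t^{-1/2}\log(1/t))$; being a finite sum, this range contributes $o(1/t)$. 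For the finitely many $n$ with $\lambda_n>\frac14$ and $r_n\le R_0$ one has $K_{ir_n}(2t) = O(1)$ as $t\to0^+$ (the $z\to0$ singularity of $K_{i\nu}(z)$ being logarithmic only at $\nu=0$), so each term is $O(t^{-1/2})$ and the sum is $O(t^{-1/2})$. For the $n$ with $r_n>R_0$, I would use the bound $K_{ir}(z) = O(r\,e^{-\pi r/2})$ valid for $r>z$ (one integration by parts in the integral representation of $K_{ir}$, as in Section \ref{marksec}) together with $\sum_{r_n\le R}|P(\phi_n)|^2 = O(R)$ --- which is Theorem \ref{thm2}, or follows from Weyl's law and Reznikov's bound (\ref{eqrez}) --- to get, by partial summation,
\[
\sum_{r_n>R_0}|K_{ir_n}(2t)|\,|P(\phi_n)|^2 \;\ll\; \sum_{r_n>R_0} r_n e^{-\pi r_n/2}|P(\phi_n)|^2,
\]
a finite constant independent of $t$, so this range contributes $O(t^{-1/2})$ as well. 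Combining with the $n=0$ computation yields $S(t) = \frac{\len(C)^2}{\mathrm{vol}(X)}\,\frac{\pi\sqrt2}{t} + O(t^{-1/2}\log(1/t))$, which is the proposition.

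I expect the only genuinely delicate point to be the last range $r_n>R_0$: one needs a decay estimate for $K_{ir}(2t)$ that is exponentially small in $r$ and, crucially, uniform as $t\to0^+$, paired with the correct order of growth of the partial sums of $|P(\phi_n)|^2$. This is exactly where Theorem \ref{thm2} (or at minimum Weyl's law plus Reznikov's subconvex period bound) is needed, since the unweighted series $\sum_n|P(\phi_n)|^2$ diverges and no trivial estimate will do. Everything else is routine; the content of the proposition is simply that, in the $t\to0^+$ limit, it is the constant eigenfunction alone that carries the $1/t$.
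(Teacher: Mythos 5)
Your decomposition (isolate $\lambda_0=0$, show the remainder is $o(1/t)$) matches the paper's strategy, but the tail estimate in the range $r_n > R_0$ has a genuine gap. You invoke $K_{ir}(z) = O(re^{-\pi r/2})$ for $r>z$ from Section~\ref{marksec}, but that bound was derived there in the regime $z\to\infty$. One integration by parts on the representation GR 8.432, \#5 leaves the integral $\int_0^\infty t\,|\sin t|\,(t^2+z^2)^{-3/2}\,dt = 1/z$, so what one actually gets uniformly is $K_{ir}(z) = O\bigl(re^{-\pi r/2}/z\bigr)$; feeding $K_{ir_n}(2t) = O(r_n e^{-\pi r_n/2}/t)$ into $2e^{2t}\sqrt{\pi/t}\sum K_{ir_n}(2t)|P(\phi_n)|^2$ produces $O(t^{-3/2})$, which swamps the claimed $1/t$ main term rather than being absorbed by it. The statement you want (that $K_{ir}(z) = O(e^{-\pi r/2})$ with a constant uniform for $z$ bounded above) is true, but it requires a different input, for instance the small-argument asymptotic $K_{ir}(z) = -\sqrt{\pi/(r\sinh\pi r)}\,\sin(r\log(z/2)-\arg\Gamma(1+ir)) + O(z^2)$; the cited derivation does not supply the needed uniformity in $z$.

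The paper dispenses with exponential decay altogether. It integrates by parts in $2K_{ir}(z) = \int_{\mathbb{R}}e^{-z\cosh t}e^{irt}\,dt$, where each integration by parts brings out a prefactor $z/(ir)$, and controls the resulting oscillatory integral using that $e^{-z\cosh t}\sinh t$ is unimodal on $(0,\infty)$ with maximum of size $1/z$ near $t_0(z)\sim\log(2/z)$: an alternating-blocks argument then gives $K_{ir}(z) = O(1/r^2)$ after one step and $O(1/r^3)$ after two, with constants uniform for small $z$ provided $r > \pi/\log(2/z)$, a threshold that tends to $0$ and hence eventually covers every fixed $r_n>0$. Then $\sum_{r_n>0} r_n^{-3}|P(\phi_n)|^2$ converges by Reznikov's bound~(\ref{eqrez}) together with Weyl's law, giving the required $O(t^{-1/2})$ tail. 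Two smaller remarks: partial summation and Theorem~\ref{thm2} are unnecessary overkill once an exponentially decaying weight is in hand (Reznikov plus Weyl's law already make $\sum r_n\,e^{-\pi r_n/2}|P(\phi_n)|^2$ converge); and your asserted remainder $O(t^{-1/2}\log(1/t))$ is not justified when there are exceptional eigenvalues $0<\lambda_n<\tfrac14$, since, as you yourself note, these contribute $O(t^{-1/2-\rho_n})$.
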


\begin{proof}
We can take care of $\sum_{r_n > 0}  K_{ir_n}(2t)
|P(\phi_n)|^2$ as one piece.  Let us thus consider
$K_{ir}(z)$ with $r  >0$ and $z>0$ and small.
After integrating 
$2K_{ir}(z) = \int_\mathbb{R} e^{-z\cosh(t)}e^{itr}~dt$ 
by parts
once we arrive at
\begin{equation}
\frac{z}{ir}\int_\mathbb{R} e^{-z\cosh(t)}\sinh(t) 
e^{itr}~dt. \label{eqn2}
\end{equation}
Let $t_0(z)$ be the solution in $t$ of
$\cosh(t) = z\sinh^2(t).$
Since $z$ is very small, $1/z$ is very large, and 
$1/z = \frac{\sinh^2(t_0(z))}{\cosh(t_0(z))}\sim \sinh(t_0(z))$ since
$t_0(z)$ must be relatively large.
Now, the function $e^{-z\cosh(t)}\sinh(t)$ is strictly increasing from
$0$ to $t_0(z)$ and strictly decreasing from $t_0(z)$ to $\infty$, with 
odd symmetry about the origin.  This means we can kill a lot of area
to the right of $0$ by the oscillating factor $e^{itr}$ in (\ref{eqn2});
further the area to the left of $0$ dies also, as long as
\begin{equation}
\frac{\pi}{r}<t_0(z).    \label{eqn3}
\end{equation}
The area that does not cancel is around $\pm t_0(z)$.  If we use
$z\sinh(t) \sim 1$ for $t$ near $t_0(z)$, we see (\ref{eqn2}) is bounded by
$O(\frac{1}{r^2})$.  This analysis depends on whether
(\ref{eqn3}) is true.

For $z$ sufficiently small, $t_0(z) \sim \log(2/z)  $, and so (\ref{eqn3})
becomes approximately
$r >\frac{\pi}{\log(2/z)}.$
We can take $r< r_i$ where $1/4 + r_i^2$ is the first eigenvalue larger
than $1/4$.  Then the part of the sum of (\ref{rtfexp}) over such $r$ is
bounded by  (changing $z$ back to $t$)
\[
O(t^{-1/2})\cdot \sum_{r_n > 0} r_n^{-2} |P(\phi_n)|^2.
\]
We need to show convergence of the series, so we integrate by parts again,
\[2K_{ir}(z)=
\frac{z^2}{r^2}\int_{\mathbb{R}}e^{-z\cosh(t)}\sinh^2(t)e^{itr}~dt -
\frac{z}{r^2}\int_{\mathbb{R}}e^{-z\cosh(t)}\cosh(t)e^{itr}~dt.
\]
The same type of oscillatory analysis applied to both terms here
(separately)
yields
\[
O(t^{-1/2})\cdot 
\sum_{r_n > 0} r_n^{-3} |P(\phi_n)|^2,
\]
which converges by (\ref{eqrez}).

This leaves  only $\lambda = 0$, and possibly exceptional
$\lambda = 1/4 - \epsilon^2$, as well as possibly $\lambda = 1/4$.
 For $t>0$ very small,  $K_0(t)\sim  -\log(t/2)-\gamma_0$, while
for $\epsilon >0$ we have $K_\epsilon(t)\sim \frac{\Gamma(\epsilon)}{2}
(\frac{2}{t})^\epsilon$ (cf.~\cite{AW}; here $\gamma_0$ is Euler's constant.)
 Consequently whatever eigenvalues we have left,
 the $\lambda = 0$ (i.e., the $K_{1/2}(2t)$) term dominates the asymptotics of 
of the spectral side as $t\rightarrow 0$.  
\end{proof}

\begin{prop}
\label{propleft1}
$\pi_{\delta}\(xe^{-\sqrt{\log x }}\) =  O\(\frac{x}{\log x}\)
~\text{as} ~x\rightarrow \infty.$   In particular $\pi_\delta(x) = O(x^{1+\epsilon})$ for
any $\epsilon>0$.
\end{prop}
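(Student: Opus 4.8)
The plan is to combine the positivity of the geometric side of the exponential-kernel relative trace formula (\ref{rtfexp}) with the upper bound on its total size supplied by Proposition \ref{propright}. Every summand $e^{2t}K_0\(\f{\delta+2}2 t\)K_0\(\f{|\delta-2|}2 t\)$ is nonnegative, and the main term $2\,\len(C)e^{2t}K_0(2t)$ is $O(\log(1/t))=o(1/t)$ as $t\to 0^+$. Since the geometric side of (\ref{rtfexp}) equals its spectral side, which is of order $1/t$ by Proposition \ref{propright}, there is a constant $A>0$ with
\[
\sum_{\gamma}e^{2t}\,K_0\(\f{\delta+2}2 t\)\,K_0\(\f{|\delta-2|}2 t\)\ \le\ \f A t
\]
for all sufficiently small $t>0$, where $\gamma$ runs over representatives of the nontrivial double cosets $\Gamma_0\bs\Gamma/\Gamma_0-\Gamma_0$ and $\delta=\delta(\gamma)$.

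Next I would fix a large $x$, set $t=1/x$, and put $x'=xe^{-\sqrt{\log x}}$. The key step is a lower bound for a single summand when $2<\delta(\gamma)<x'$. For such $\gamma$ both arguments $\f{\delta+2}2 t$ and $\f{|\delta-2|}2 t$ are at most $\f{x'+2}{2x}$, which tends to $0$ as $x\to\infty$; and since $K_0(y)/\log(1/y)\to 1$ as $y\to 0^+$, there is a fixed $y_0>0$ with $K_0(y)\ge\f12\log(1/y)$ for $0<y\le y_0$. Once $x$ is large enough that $\f{x'+2}{2x}<y_0$, this applies to both Bessel factors: from $(\delta+2)t\le\f{x'+2}x\le 2e^{-\sqrt{\log x}}$ and $|\delta-2|t\le\delta t<x't=e^{-\sqrt{\log x}}$ one gets $K_0\(\f{\delta+2}2 t\)\ge\f12\log\(e^{\sqrt{\log x}}\)=\f12\sqrt{\log x}$ and likewise $K_0\(\f{|\delta-2|}2 t\)\ge\f12\sqrt{\log x}$. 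Since $e^{2t}\ge 1$, this gives
\[
e^{2t}\,K_0\(\f{\delta+2}2 t\)\,K_0\(\f{|\delta-2|}2 t\)\ \ge\ \f14\log x
\]
for every such $\gamma$.

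Combining the two displays, the number of $\gamma$ with $2<\delta(\gamma)<x'$ is at most $\f4{\log x}\cdot Ax=O\(\f x{\log x}\)$. As there are only finitely many exceptional double cosets (those with $\delta<2$), this yields $\pi_\delta\(xe^{-\sqrt{\log x}}\)=O\(\f x{\log x}\)$, the first assertion. For the second, given large $y$ let $x=x(y)$ solve $xe^{-\sqrt{\log x}}=y$; the map $x\mapsto xe^{-\sqrt{\log x}}$ is eventually strictly increasing with limit $\infty$, so $x(y)$ is well-defined and $x(y)\to\infty$. From $\log x=\log y+\sqrt{\log x}$ one deduces $\log x\le 2\log y$ for $y$ large, hence $x=ye^{\sqrt{\log x}}\le ye^{\sqrt{2\log y}}\le y^{1+\epsilon}$ for $y$ large; therefore $\pi_\delta(y)=O(x/\log x)=O(y^{1+\epsilon})$.

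The one genuinely delicate point is the calibration of the shift $e^{-\sqrt{\log x}}$: it must shrink fast enough that the cutoff $\delta t$ tends to $0$, so that the near-origin behavior of $K_0$ forces each surviving geometric term to be of size about $(\sqrt{\log x})^2=\log x$, but slowly enough that the resulting bound $O(x/\log x)$ on the truncated count still delivers $\pi_\delta(x)=O(x^{1+\epsilon})$. Everything else is routine bookkeeping with the expansion of $K_0$ near $0$ and with Proposition \ref{propright}.
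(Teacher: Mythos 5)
Your proposal is correct and follows essentially the same route as the paper: positivity of the geometric side of (\ref{rtfexp}) plus the $O(1/t)$ bound from Proposition \ref{propright}, a truncation at $\delta<xe^{-\sqrt{\log x}}$ with $t=1/x$, and the logarithmic blow-up of $K_0$ near the origin to force each surviving geometric term to be $\gtrsim\log x$. The only cosmetic difference is that the paper reduces to a single Bessel factor via $K_0\bigl(\tfrac{\delta+2}{2}t\bigr)^2\le K_0\bigl(\tfrac{\delta+2}{2}t\bigr)K_0\bigl(\tfrac{\delta-2}{2}t\bigr)$ (monotonicity of $K_0$), whereas you lower-bound both factors directly; both yield the same $\tfrac14\log x$ estimate, and your version is if anything slightly more transparent. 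You also spell out the deduction of $\pi_\delta(x)=O(x^{1+\epsilon})$ in more detail than the paper, which simply states it as an immediate consequence.
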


\begin{proof}
It suffices to consider $\delta > 2$.
We underestimate the regular geometric terms using 
\begin{equation}
  \left[K_0\(\f{\delta+2}2t\)\right]^2 \le K_0\(\f{\delta+2}2 t\)K_0\(\f{\delta-2}2t\).  
  \label{eqn4}
\end{equation}
Since we are taking $t\rightarrow 0$ the $e^{2t}$ factor does not
contribute.  As above, we have
$K_0\(\f{\delta+2}2t\) \sim - \log\(\frac{\delta+2}{4}t\)-\gamma_0,$
as long as the product $\frac{\delta+2}2t$ is sufficiently small (cf.~\cite{AW}).
If we take $\frac{\delta+2}2t$ sufficiently small, by the size of
$\gamma_0$ compared to $\log 2 $ we see we can take
\begin{equation}
K_0\(\f{\delta+2}2t\) \ge -\log \left( \f{\delta+2}2t \right)    .
\label{eqn5}
\end{equation}

Let us now sum over \emph{only} those $\gamma$ for which
\begin{equation}
\delta < \frac{2}{t}e^{-\sqrt{\log(1/t)}} -2   \label{eqn6}
\end{equation}
in (\ref{rtfexp}).  
For such $\delta$, we have
$\log \f 1t \le  \left[K_0\(\f{\delta+2}2t\)\right]^2.$
Thus we see
\[
\sum_{\f{\delta+2}2 < \frac{1}{t} e^{-\sqrt{\log(1/t)}}} 
\log \f 1t  = O\(\f 1t\) \text{ for }t\to 0^+,
\]
where the left hand side comes from an underestimate of the geometric side of
(\ref{rtfexp}) and the right hand side comes from Proposition \ref{propright}.

Setting $x = 1/t$, we get the number of $\f{\delta +2}2 < xe^{-\sqrt{\log x}}$ is
$O\(\f x{\log x}\)$, which gives the above bound.
\end{proof}

\begin{prop}
\label{propleft2}
For $\epsilon>0$, 
$\pi_{\delta}(x) \gg_\epsilon x^{1-\epsilon}
~\text{as}~x\rightarrow \infty.$
\end{prop}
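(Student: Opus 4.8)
The plan is to run the counting argument of Proposition \ref{propleft1} in the opposite direction, extracting a lower bound for $\pi_\delta$ from the fact that the geometric side of (\ref{rtfexp}) is genuinely of size $1/t$ as $t\to 0^+$. First I would isolate the regular geometric terms. In (\ref{rtfexp}) the main term $2\len(C)e^{2t}K_0(2t)$ is $O(\log(1/t))$, and each of the finitely many exceptional terms ($\delta<2$) is $O(\log^2(1/t))$ as $t\to 0^+$, so both are $o(1/t)$; comparing with Proposition \ref{propright} and dividing by $e^{2t}\to 1$ gives
\[ \sum_{\delta(\gamma)>2} K_0\(\f{\delta+2}2 t\) K_0\(\f{\delta-2}2 t\) \sim \f{\len(C)^2}{\mathrm{vol}(X)}\f{\pi\sqrt 2}{t},\qquad t\to 0^+, \]
and in particular this sum is $\gg 1/t$.

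Next, fix $\epsilon>0$, set $x=1/t$, and split the sum above at $\delta=x^{1+\epsilon/2}$. For the tail $\delta>x^{1+\epsilon/2}$, both arguments $\f{\delta\pm 2}2 t$ exceed $1$ for $x$ large, so $K_0\(\f{\delta+2}2 t\)K_0\(\f{\delta-2}2 t\)=O(e^{-\delta t})$ by the large-argument asymptotic for $K_0$; summing this against the crude polynomial bound $\pi_\delta(y)=O(y^2)$ (the lattice point estimate of Section \ref{secfour}, or Proposition \ref{propleft1}) by partial summation bounds the tail by an incomplete Gamma integral of order $x^{O(1)}e^{-x^{\epsilon/2}/2}=o(x)$. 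For the head $2<\delta\le x^{1+\epsilon/2}$, recall the key point that the values $\delta(\gamma)$ are bounded below by the least one, $\delta_0>2$; hence the arguments $\f{\delta\pm2}2 t$ all lie between $\f{(\delta_0-2)t}2\gg x^{-1}$ and $\f{(x^{1+\epsilon/2}+2)t}2\ll x^{\epsilon/2}$, so by $K_0(u)=O(1+|\log u|)$ each factor is $O(\log x)$. Thus every term in the head is $O(\log^2 x)$, whence the head is $O\(\pi_\delta(x^{1+\epsilon/2})\log^2 x\)$.

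Combining the two pieces with the lower bound $\gg 1/t=x$ yields $\pi_\delta(x^{1+\epsilon/2})\log^2 x\gg x$, i.e.\ $\pi_\delta(x^{1+\epsilon/2})\gg x/\log^2 x$. Writing $X=x^{1+\epsilon/2}$, so that $x=X^{1/(1+\epsilon/2)}$ and $\log x=\log X/(1+\epsilon/2)$, this reads $\pi_\delta(X)\gg X^{1/(1+\epsilon/2)}/\log^2 X$; since $\tfrac{1}{1+\epsilon/2}>1-\tfrac\epsilon2$, the logarithmic factor is absorbed and $\pi_\delta(X)\gg_\epsilon X^{1-\epsilon}$ for all large $X$. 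Since every sufficiently large $X$ arises as $x^{1+\epsilon/2}$ for $x=X^{1/(1+\epsilon/2)}$, this gives the claim.

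I expect the only real work to be the bookkeeping: choosing the cutoff $\delta=x^{1+\epsilon/2}$ so that the tail is killed super-polynomially by the exponential decay of $K_0$ while the head still contributes only $O(\pi_\delta(x^{1+\epsilon/2})\log^2 x)$, and handling the partial summation with care since $\pi_\delta$ is a step function. The one genuinely necessary external input, beyond Proposition \ref{propright} and elementary $K_0$ estimates, is some polynomial bound on $\pi_\delta$ (the $O(y^2)$ lattice estimate already suffices) to control the tail.
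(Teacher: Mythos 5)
Your proof is correct and follows essentially the same strategy as the paper's: lower-bound the regular geometric sum by $\gg 1/t$ via Proposition \ref{propright}, split at a polynomial cutoff in $\delta$, kill the tail using the exponential decay of $K_0$, and bound each head term by $O(\log^2 x)$ so that the head is $O\bigl(\pi_\delta(\cdot)\log^2 x\bigr)$, which then yields the lower bound after optimizing the cutoff exponent. The one mild difference is in the tail: you control it with the crude lattice estimate $\pi_\delta(y)=O(y^2)$ and partial summation, whereas the paper invokes Proposition \ref{propleft1} to get $\delta_n\gg_\epsilon n^{1-\epsilon}$ and sums that; your version is a bit cleaner since it decouples this proposition from Proposition \ref{propleft1}, but both rest on the same mechanism (the exponential decay of $K_0$ makes any polynomial bound on $\pi_\delta$ sufficient for the tail).
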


\begin{proof}
We throw away the finite number of $\delta$ in (\ref{rtfexp}) for which
$K_0^2\(\f{\delta-2}2t\) < K_0\(\f{\delta+2}2 t\)K_0\(\f{\delta-2}2t\),$
 as well as the single $K_0$
term on the far left of (\ref{rtfexp}).  By positivity, this gives
\begin{equation}
  \sum_{\delta} K_0^2\(\f{\delta-2}2t\)  \gg \f 1t  ~\text{as}~t 
\rightarrow 0    \label{eqn7}
\end{equation}
by Proposition \ref{propright}, where a finite number of $\delta$ have been
tossed away.

Let $\epsilon, t>0$ and $p>1$  all be small.
Separate the sum of (\ref{eqn7}) into a sum over $\f{\delta-2}2 >1/t^{p}$ and 
a sum over $\f{\delta-2}2\le 1/t^{p}$.
By Proposition \ref{propleft1}, 
Consequently, there is a constant $c_\epsilon>0$ so that
$\f{\delta_n-2}2  \ge c_\epsilon n^{1-\epsilon}.$
By the monotone property of $K_0$, we have
\[
\sum_{\f{\delta-2}2>1/t^p} K_0^2((\delta/2-1)t) \le
\sum_{c_\epsilon n^{1-\epsilon}> 1/t^p} K_0^2(c_\epsilon n^{1-\epsilon}t).
\]
Now $c_\epsilon n^{1-\epsilon}t>1/t^{p-1}$, so for sufficiently small $t$,
$1/t^{p-1}$ is large and $K_0^2(c_\epsilon n^{1-\epsilon}t)$
will be small due to the exponential decay of $K_0(t)$.
From the asymptotic (\ref{K0asymp}), one gets
\[
\sum_{c_\epsilon n^{1-\epsilon}> 1/t^p} K_0^2(c_\epsilon n^{1-\epsilon}t)
< O \left( \sum_{n^{1-\epsilon}>\frac{1}{c_\epsilon t^p}}
\frac{1}{c_\epsilon n^{1-\epsilon}t} 
e^{-2c_\epsilon n^{1-\epsilon}t}   \right)  .
\]
Now $n^{1-\epsilon}>\frac{1}{c_\epsilon t^p}$ implies
$t> c_\epsilon^{-1/p}n^{-(1-\epsilon)/p}$, and so the right hand side
above is
\[
O \left( \sum_{n^{1-\epsilon}>\frac{1}{c_\epsilon t^p}}
\frac{1}{c_\epsilon n^{1-\epsilon}t} 
e^{-2c_\epsilon^{1-1/p} n^{(1-\epsilon)(1-1/p)}}   \right) \le
O \left( \sum_{n^{1-\epsilon}>\frac{1}{c_\epsilon t^p}}
\frac{1}{t^{p-1}} 
e^{-2c_\epsilon^{1-1/p} n^{(1-\epsilon)(1-1/p)}}   \right).
\]
The last term here uses $\frac{1}{t^{p-1}}\le 
c_\epsilon n^{1-\epsilon}t$.  The right side above is clearly seen to be
$O_{\epsilon,p}(1/t^{p-1})$, as the sum converges but depends on
$\epsilon$ and $p$.

We can estimate the remaining terms by
\[
\sum_{\f{\delta-2}2 \le 1/t^p} K_0^2( t) \sim
~(\log(t/2) - \gamma_0)^2  \pi_{\f{\delta-2}2}(1/t^p)
\]
using again the asymptotics of $K_0(t)$.  Here $\pi_{\f{\delta-2}2}(x)$
means the number of $\delta$ such that $\f{\delta-2}2 < x$.

Piecing everything together, we have
\[
(\log(t/2) - \gamma_0)^2  \pi_{\f{\delta-2}2}(1/t^p)
+O_{\epsilon,p}(1/t^{p-1}) \gg \f 1t.
\]
Recall, we have $p$ is barely larger than 1, but the constant depending on
$\epsilon$ and $p$ could be large.  We settle this by writing
\[
\pi_{\f{\delta-2}2}\(1/t^p\) \gg_{\epsilon,p} \frac{1}{t\log^2(t)} 
\implies \pi_{\f{\delta-2}2}\(1/t\) \gg_{\epsilon,p} \frac{1}{t^{1/p}\log^2(t)}.
\]
Setting $1/p = 1-\epsilon$ and $x= 1/t$ proves that $\pi_{\f{\delta-2}2}(x)
\gg_\epsilon \frac{x^{1-\epsilon}}{\log^2(x)}$.  It is easy to see that the same asymptotic
lower bound also holds for $\pi_\delta(x)$, and we may absorb the $\log^2(x)$ into 
$\epsilon$.
\end{proof}

\subsection{An asymptotic}\label{Goodsec}

Let $C_1$, $C_2$, $\tau$, $\Gamma_1$ and $\Gamma_2$ be as in Section \ref{pairssec}.
We present a special case of a result of Good, which is also valid for $\Gamma$ discrete,
cofinite.  One reason \cite{Good} is difficult to understand is that, apart from being
dense, the notation is quite cumbersome, which is at least partially due to the fact that 
he is trying to treat many cases in a uniform way.  (Though even restricted to the case of
smooth compact Riemann surfaces, the formulas in \cite{Good} seem more complicated than ours;
a simple example is the distinction between $\gamma$ and $\gamma\inv$ mentioned below.)

We will explain some of the quantities he considers there in our context, 
using a simplified version of his notation. 
For a hyperbolic $\gamma = \bmx a & b \\ c & d \emx$, let 
$\Lambda^\ell(\gamma) = \f 12 \log |\f{ab}{cd}|$ and $\Lambda^r(\gamma) =
\f 12 \log |\f{ac}{bd}|$ (these are denoted by 
${}_\xi \Lambda^{\ell}_\chi$ and ${}_\xi \Lambda^{r}_\chi$ in \cite{Good}).

For $\gamma \in \Gamma_1 \bs \Gamma / \Gamma_2$ which is regular and non-exceptional 
$(\delta(\tau\inv \gamma) > 2)$, let us write 
\[ N = \bmx a'&b'\\c'&d' \emx = 
\bmx e^{-\f 12\Lambda^\ell(\gamma)} & \\ & e^{\f 12\Lambda^\ell(\gamma)}\emx
\gamma 
\bmx e^{-\f 12\Lambda^r(\gamma)} & \\ & e^{\f 12\Lambda^r(\gamma)} \emx.\]
We define $\nu(\gamma) = |b'| + |d'|$ (cf.~\cite[Lemma1]{Good}).  We see that 
$b'=b|c/b|^{1/2}$ and $d'=d|a/d|^{1/2}$ so $\nu(\gamma) = |ad|^{1/2}+|bc|^{1/2}$.

Define a generalized Kloosterman sum (denoted ${}_\xi^\delta S_\chi^{\delta'}(m,n,\nu)$
in \cite{Good}) by
\[ S_\Gamma(m,n,\nu) =  \sum e \( \f m{\len(C_1)} \log \left|\f{ab}{cd}\right|^{1/2} +
\f n{\len(C_2)} \log\left|\f{ac}{bd}\right|^{1/2} \) \]
where $e(x)=e^{2\pi i x}$ and the sum runs over $\gamma \in \Gamma_1 \bs \Gamma / \Gamma_2$ 
such that $\nu(\gamma) = \nu$.  For $\nu > 1$, this is (up to a bounded number of terms)
 $\sum_{\delta, \delta' \in \{ 0, 1 \}} {}^\delta_\xi S_\chi^{\delta'}(m,n,\nu)$ in Good's notation.

\begin{thm} {\rm (\cite[Theorem 4]{Good})} 
  As $x \to \infty$,
\[ \f 1{\len(C_1)\len(C_2)} \sum_{\nu \leq x} S_\Gamma(m,n,\nu)
\sim \f{\delta_{0,m} \delta_{0,n}}{\pi \mathrm{vol}(X)} x^2 + O(x^{s}), \]
where $\delta_{0,m} = 1$ if $m=0$ and $0$ else, and $s$ is the maximum of $\f 43$ and
$1 + 2ir_n$ for $0 < \lambda_n < \f 14$.
\end{thm}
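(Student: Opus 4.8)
The plan is to fit this into the relative trace formula for a pair of geodesics of Sections~\ref{pairssec} and \ref{sec:twistper} and to explain why the main term and the error term take the stated shape; this is Theorem~4 of \cite{Good} transcribed into the present notation, so the last quantitative step I would borrow from \cite{Good}. First I would form the twisted relative trace formula on $C_1\times C_2$ with the exponential kernel: let $\chi_1$ be the character of $C_1=\Gamma_1\bs i\R^+$ attached to the integer $m$ and $\chi_2$ the character of $C_2$ attached to $n$, as in Section~\ref{sec:twistper}, and integrate $K(x,y)\chi_1(x)\bar{\chi_2(y)}$ over $C_1\times C_2$ with $\Phi(x)=e^{-tx}$, $t>0$. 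The spectral side is $\sum_j h(r_j)P_{\chi_1}(\phi_j)\bar{P_{\chi_2}(\phi_j)}$ with $h(r)=2e^{2t}\sqrt{\pi/t}\,K_{ir}(2t)$, exactly as in Section~\ref{secfour}. On the geometric side I would carry the character through the same substitutions that produced (\ref{orbintadd}) and the Bessel evaluations of Section~\ref{secfour}: for a non-exceptional $\gamma=\bmx a&b\\c&d\emx$ the twisted orbital integral factors as a phase times a Bessel weight,
\[
e\!\(\f m{\len(C_1)}\log\left|\f{ab}{cd}\right|^{1/2}+\f n{\len(C_2)}\log\left|\f{ac}{bd}\right|^{1/2}\)\,e^{2t}K_0\(2|ad|t\)K_0\(2|bc|t\),
\]
and since $|ad|-|bc|=1$ for non-exceptional $\gamma$ the Bessel factor depends only on $\nu(\gamma)=|ad|^{1/2}+|bc|^{1/2}$. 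Grouping the $\gamma\in\Gamma_1\bs\Gamma/\Gamma_2$ by the value of $\nu(\gamma)$ therefore collects exactly the Kloosterman sum $S_\Gamma(m,n,\nu)$.

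This yields an identity of the shape
\[
\sum_\nu S_\Gamma(m,n,\nu)\,w_\nu(t)=h(i/2)\,P_{\chi_1}(\phi_0)\bar{P_{\chi_2}(\phi_0)}+\sum_{\lambda_j>0}h(r_j)P_{\chi_1}(\phi_j)\bar{P_{\chi_2}(\phi_j)}+O(1),
\]
where $w_\nu(t)=e^{2t}K_0(2|ad|t)K_0(2|bc|t)$ with $|ad|^{1/2}+|bc|^{1/2}=\nu$ and $|ad|-|bc|=1$, so that $w_\nu(t)\sim\log^2(\nu^2t)$ up to bounded terms when $\nu^2t$ is small and $w_\nu(t)=O\((\nu^2t)^{-1}e^{-\nu^2t}\)$ when $\nu^2t$ is large, and the $O(1)$ absorbs the finitely many exceptional $\gamma$ present when $C_1\cap C_2\ne\emptyset$. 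The first term on the right is the contribution of $\lambda_0=0$: since $\phi_0$ is constant, $P_{\chi_i}(\phi_0)=\phi_0\int_{C_i}\chi_i$, which vanishes unless $\chi_i$ is trivial; thus this term is present precisely when $m=n=0$, in which case it equals $h(i/2)\,|\phi_0|^2\len(C_1)\len(C_2)$, a quantity of exact order $1/t$ proportional to $\len(C_1)\len(C_2)/\mathrm{vol}(X)$. This is the source of the factor $\delta_{0,m}\delta_{0,n}$ and of the constant $1/(\pi\,\mathrm{vol}(X))$ in the theorem.

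Next I would extract the asymptotic for $\sum_{\nu\le x}S_\Gamma(m,n,\nu)$ by letting $t\to0^+$ and applying a Tauberian theorem, in the spirit of Theorem~\ref{thm2} and Propositions~\ref{propleft1}--\ref{propleft2}. After the change of variable $u=\nu^2$ the left-hand side is a Laplace-type transform of the counting measure against a kernel with a $\log^2$ singularity at the origin; for $(m,n)=(0,0)$ this measure is monotone, and the $\log^2$ singularity is exactly what converts the simple pole of order $1/t$ on the right into an $x^2$ main term for the counting function, with constant $\len(C_1)\len(C_2)/(\pi\,\mathrm{vol}(X))$ --- the same mechanism by which the $\log$ asymptotics of $K_0$ in Section~\ref{orthosec} turned the pole $\pi/t$ into power bounds for $\pi_\delta$, only now the constant must be pinned down exactly. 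For $(m,n)\ne(0,0)$ the main term is absent and the content of the theorem is the size of the spectral remainder: the eigenvalues $\lambda_j>\f14$ contribute oscillating terms (the twisted periods are complex), which by Reznikov's bound (\ref{eqrez}) and Weyl's law through Lemma~\ref{lemma4} are controlled by $O(x^{4/3})$, while each exceptional eigenvalue $0<\lambda_j=\f14-\epsilon_j^2<\f14$ contributes, through the real-order Bessel function $K_{\epsilon_j}$ --- which does not enjoy the oscillatory cancellation of the imaginary-order ones --- a genuine secondary term of order $x^{1+2\epsilon_j}$, i.e.\ $x^{1+2ir_j}$ with the appropriate branch. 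Hence $s=\max(\f43,\max_j(1+2ir_j))$.

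The main obstacle is the sharpness of this Tauberian step. The exponential kernel $\Phi(x)=e^{-tx}$ by itself yields only $x^{2\pm\varepsilon}$ --- this is precisely what Propositions~\ref{propleft1} and \ref{propleft2} achieve for the companion count $\pi_\delta$ --- and to obtain the exact constant together with the power-saving error $O(x^{4/3})$ one needs either a test function adapted to localize $\nu$ in a window (followed by a quantitative lattice-point and Weyl-law input) or the detailed spectral analysis of \cite{Good}. I would therefore take the last step from \cite[Theorem~4]{Good}, the derivation above serving to identify $\nu(\gamma)$, $S_\Gamma(m,n,\nu)$ and the main term with Good's quantities; in particular the exponent $\f43$ does not come out of the crude exponential-kernel estimates and is one of the genuine outputs of Good's method.
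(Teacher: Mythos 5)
This statement is a direct citation of Good's Theorem~4; the paper does not prove it, but only translates Good's quantities ($\Lambda^\ell$, $\Lambda^r$, $\nu(\gamma)$, the generalized Kloosterman sums) into the notation of its own relative trace formula and then uses the result to derive the Corollary on $\pi_\delta(x)$. Your proposal is therefore more ambitious than the text: it attempts a heuristic re-derivation of Good's theorem from the twisted relative trace formula of Sections~\ref{sec:twistper} and \ref{pairssec}, and you are right to flag at the end that the final quantitative step — the exact constant $1/(\pi\,\mathrm{vol}(X))$, and especially the $O(x^{4/3})$ error and the secondary terms $x^{1+2ir_n}$ from exceptional eigenvalues — does not come out of the exponential-kernel Tauberian machinery of Sections~\ref{marksec} and \ref{orthosec}, and genuinely requires Good's detailed spectral analysis with a suitably localized test function. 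That honesty is the main thing to get right here, and you get it right.

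One concrete technical error in the sketch: you assert that the twisted orbital integral for a non-exceptional $\gamma$ factors as a phase times $e^{2t}K_0(2|ad|t)K_0(2|bc|t)$. Carrying the characters through the substitutions $z=x/y$, $w=xy$ in (\ref{orbintadd}) actually produces a product of the form
\[
\bigl(\text{phase}\bigr)\cdot e^{2t}\,K_{(\mu+\lambda)/2}\!\bigl(2|ad|\,t\bigr)\,K_{(\mu-\lambda)/2}\!\bigl(2|bc|\,t\bigr),
\]
with $\mu,\lambda$ the purely imaginary exponents attached to $m$ on $C_1$ and $n$ on $C_2$ — the same phenomenon the paper already records for the main term $I_{\Id}(\Phi)=2\,\len(C)\,e^{2t}K_{(\mu+\lambda)/2}(2t)$ in Section~\ref{sec:twistper}. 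The order of the $K$-Bessel function is $0$ only when $m=n=0$. Since $K_{i\rho}(z)$ and $K_0(z)$ share the same large-$z$ asymptotic, and since the $\log^2$ singularity at $z\to 0^+$ that drives the $x^2$ main term is only needed precisely when $m=n=0$ (the case in which the constant eigenfunction survives and $K_0$ is the correct weight), this slip does not derail the identification of the main term; but it does matter for the non-exceptional Bessel weights when $(m,n)\ne(0,0)$, where $K_{i\rho}$ remains bounded rather than logarithmically singular near the origin. A related subtlety worth noting: once the Bessel orders are unequal, the weight is no longer symmetric in $|ad|\leftrightarrow|bc|$ and thus is not literally a function of $\nu(\gamma)$ alone; this is presumably why Good's own decomposition carries the extra sign indices $\delta,\delta'\in\{0,1\}$ mentioned in the paper, with $S_\Gamma(m,n,\nu)$ being a sum of four of his ``pure'' Kloosterman sums.

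In short: as an explanatory bridge between the paper's framework and Good's theorem your sketch is serviceable and correctly locates both the origin of the $\delta_{0,m}\delta_{0,n}$ main term and the limits of the exponential kernel; but it is not (and does not claim to be) a proof, and the Bessel-order bookkeeping for $(m,n)\ne(0,0)$ should be corrected as above if it is to accurately mirror Good's geometric side.
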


If in fact there are exceptional eigenvalues $(0 < \lambda_n < \f 14)$, they each give rise 
to highest order terms than the $O(x^{4/3})$, and Good determines what these terms are.

We set $\pi_\delta(x) = \#\{\gamma \in \mathcal O(X; C_1,C_2): \delta(\tau\inv \gamma)<x \}$,
i.e., $\pi_\delta(x)$ is the number of curves $\alpha_\gamma$ in the orthogonal spectrum
such that $\delta(\gamma) = 2 \cosh(\len(\alpha_\gamma)) < x$.

\begin{cor} When $C_1=C_2$, we have
  \[ \pi_\delta(x) \sim \f{\len(C)^2}{\pi\mathrm{vol}(X)} x. \]
  If $C_1 \neq C_2$, we have the asymptotic bound
\[ 
\f 1{\delta(\tau)} \f{{\len(C_1)\len(C_2)}}{{\pi}\mathrm{vol}(X)}x \ll
\pi_\delta(x)
\ll \delta(\tau) \f{{\len(C_1)\len(C_2)}}{{\pi}\mathrm{vol}(X)} x. \]
\end{cor}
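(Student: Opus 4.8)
The plan is to derive the corollary from the reinterpretation of Good's Theorem~4 stated just above, applied with the trivial frequencies $m=n=0$. With $m=n=0$ the character $e(\cdots)$ defining the generalized Kloosterman sum is identically $1$, so $S_\Gamma(0,0,\nu)$ simply counts the double cosets $\gamma\in\Gamma_1\bs\Gamma/\Gamma_2$ with $\nu(\gamma)=\nu$, and hence
\[
\#\{\gamma\in\Gamma_1\bs\Gamma/\Gamma_2:\nu(\gamma)\le x\}=\sum_{\nu\le x}S_\Gamma(0,0,\nu)=\f{\len(C_1)\len(C_2)}{\pi\,\mathrm{vol}(X)}\,x^2+O(x^\sigma)
\]
for some $\sigma<2$ (if exceptional eigenvalues $0<\lambda_n<\f14$ occur, they contribute additional terms, but still of order $x^{\sigma}$ with $\sigma<2$). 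By the identifications of Sections~\ref{geometry} and \ref{pairssec}, the non-exceptional regular double cosets correspond bijectively to the orthogonal spectrum $\mathcal O(X;C_1,C_2)$, so up to the $O(1)$ contribution of the finitely many exceptional cosets (and, when $C_1=C_2$, the identity coset) the left side equals $\#\{\alpha_\gamma\in\mathcal O(X;C_1,C_2):\nu(\gamma)\le x\}$.

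Next I would convert the parameter $\nu$ into $\delta$. For a non-exceptional regular $\gamma$ one has $\{|ad|,|bc|\}=\{\f{\delta+2}{4},\f{\delta-2}{4}\}$ with $\delta=\delta(\tau\inv\gamma)=2\cosh\len(\alpha_\gamma)>2$, as recorded in Section~\ref{secfour}; therefore, when $\nu$ is built from these entries,
\[
\nu(\gamma)^2=|ad|+|bc|+2(|ad|\cdot|bc|)^{1/2}=\f{\delta+\sqrt{\delta^2-4}}{2},
\]
so $\delta=\nu^2+\nu^{-2}$, i.e.\ $\nu(\gamma)=e^{\len(\alpha_\gamma)/2}$. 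Hence $\delta<x$ is equivalent to $\nu<\nu_0(x)$, where $\nu_0(x)=\big(\tfrac{x+\sqrt{x^2-4}}{2}\big)^{1/2}\sim\sqrt x$, and so
\[
\pi_\delta(x)=\#\{\nu(\gamma)<\nu_0(x)\}\sim\f{\len(C_1)\len(C_2)}{\pi\,\mathrm{vol}(X)}\,\nu_0(x)^2\sim\f{\len(C_1)\len(C_2)}{\pi\,\mathrm{vol}(X)}\,x,
\]
the error $O(\nu_0(x)^{\sigma})=o(x)$ being negligible. When $C_1=C_2$ we have $\tau=\Id$ and $\Gamma_1=\Gamma_2=\Gamma_0$, no twisting occurs, the identity $\nu(\gamma)=e^{\len(\alpha_\gamma)/2}$ is exact, and we obtain the first assertion with $\len(C_1)=\len(C_2)=\len(C)$.

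When $C_1\neq C_2$ the only extra point is that the matrices normalizing $\gamma$ to $N$ (hence $\nu$) must be adapted to the stabilizers $\Gamma_1$ and $\Gamma_2=\tau A\tau\inv\cap\Gamma$; since the natural coordinates on $\Gamma_2$ are $\tau$-conjugates of the diagonal ones, the clean identity $\nu(\gamma)=e^{\len(\alpha_\gamma)/2}$ survives only up to a multiplicative factor controlled by the entries of $\tau$, which one bounds by $\delta(\tau)^{\pm1/2}$ (with $\delta(\tau)$ as in Lemma~\ref{minimumdistance}). Inserting this two-sided comparison between $\nu(\gamma)$ and $e^{\len(\alpha_\gamma)/2}$ into the count above — i.e.\ applying it at the two arguments $\nu_0(x)\,\delta(\tau)^{\pm1/2}$ — yields
\[
\f{1}{\delta(\tau)}\,\f{\len(C_1)\len(C_2)}{\pi\,\mathrm{vol}(X)}\,x\ll\pi_\delta(x)\ll\delta(\tau)\,\f{\len(C_1)\len(C_2)}{\pi\,\mathrm{vol}(X)}\,x.
\]

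I expect the genuine obstacle to be none of the estimates above but rather the faithful translation of Good's framework into the present setting: unwinding the definitions of $\Lambda^\ell,\Lambda^r,\nu(\gamma)$ and $S_\Gamma(m,n,\nu)$, verifying that his Theorem~4 specializes at $m=n=0$ exactly as claimed, and tracking precisely how $\tau$ enters the normalization when $C_1\neq C_2$. Sharpening the $C_1\neq C_2$ statement to a true asymptotic would require replacing the crude $\delta(\tau)^{\pm1}$ comparison by an exact computation of this distortion, which I do not attempt here.
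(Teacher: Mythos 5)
Your proposal follows the paper's proof closely. For the $C_1=C_2$ case it is essentially identical: apply Good's Theorem~4 at $m=n=0$ so that $S_\Gamma(0,0,\nu)$ counts double cosets, then convert $\nu$ to $\delta$ via the explicit relation $\nu(\gamma)=\f{\sqrt{\delta(\gamma)+2}+\sqrt{\delta(\gamma)-2}}2$ (your computation $\nu^2 = \f{\delta+\sqrt{\delta^2-4}}{2}$, i.e.\ $\nu(\gamma)=e^{\len(\alpha_\gamma)/2}$, is a correct and pleasant restatement of this), and absorb the error $O(x^s)$ with $s<2$.

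For $C_1\neq C_2$, you correctly identify the crux — Good's count is governed by $\nu(\gamma)\asymp\sqrt{\delta(\gamma)}$, whereas $\pi_\delta(x)$ is defined via $\delta(\tau\inv\gamma)$, so one needs a two-sided comparison — but you merely \emph{assert} that the multiplicative distortion on $\nu$ is bounded by $\delta(\tau)^{\pm 1/2}$ without deriving it. This is the only step the paper actually proves: it comes from the hyperbolic triangle inequality
\[
\dist(\gamma\cdot i\R^+,\ \tau\cdot i\R^+)\ \le\ \dist(\gamma\cdot i\R^+,\ i\R^+) + \dist(\tau\cdot i\R^+,\ i\R^+),
\]
combined with the identity $\max\{2,\delta(\cdot)\}=2\cosh(\dist(\cdot\, i\R^+, i\R^+))$ from Lemma~\ref{minimumdistance} and elementary $\cosh$ estimates, which yields $\delta(\tau\inv\gamma)\le\delta(\tau)\delta(\gamma)-1$ and, symmetrically, $\delta(\gamma)\le\delta(\tau)\delta(\tau\inv\gamma)-1$. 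Plugging these into Good's count at the shifted arguments $(x+1)/\delta(\tau)$ and $\delta(\tau)x-1$ gives precisely the stated $\delta(\tau)^{\pm1}$ bounds. Since $\nu^2\asymp\delta$, your $\delta(\tau)^{\pm1/2}$ heuristic on $\nu$ is indeed what this triangle-inequality bound amounts to, so your route is the same — you just stopped short of supplying the one estimate the proof actually requires.
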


This corollary is actually contained in Good's Corollary to Theorem 4, where he asserts something stronger,
though he does not interpret his result in terms of ortholengths.

\begin{proof}
  Applying the theorem with $m=n=0$ gives
  \[ \# \{ \gamma \in \Gamma_1 \bs \Gamma / \Gamma_2 : \nu(\gamma) < x\} \sim
  \f{\len(C_1)\len(C_2)}{\pi \text{vol}(X)} x^2 + O(x^s), \] 
  for some $s < 2$.
  We note that $\nu(\gamma) = \f{\sqrt{\delta(\gamma)+2}+\sqrt{\delta(\gamma)-2}}2$, 
  so $\delta(\gamma) \sim \nu(\gamma)^2$.  It easily follows that
  \[ \# \{ \gamma \in \Gamma_1 \bs \Gamma / \Gamma_2 : \delta(\gamma) < x \} 
  \sim \f{{\len(C_1)\len(C_2)}}{{\pi}\text{vol}(X)} x, \]
  which is the first statment.

  We would like to conclude the same result for $\delta(\tau\inv \gamma)$, but we
  only know how to bound $\delta(\tau\inv \gamma)$ in terms of $\delta(\tau)$ and
  $\delta(\gamma)$.  
  Precisely, the triangle inequality
  \[ \dist(\gamma \cdot i\R^+, \tau \cdot i\R^+) \leq \dist(\gamma \cdot i\R^+, i\R^+)
  + \dist(\tau \cdot i\R^+, i\R^+) \]
  implies $e^{\dist(\gamma \cdot i\R^+, \tau \cdot i\R^+)} \leq e^{\dist(\gamma \cdot i\R^+, 
  i\R^+)} e^{\dist(\tau \cdot i\R^+, i\R^+)}$.  Since $e^x \leq 2\cosh(x) \leq e^x - 1$,
  this yields
  \[ \delta(\tau\inv \gamma) \leq \delta(\tau)\delta(\gamma)-1. \]
  Similarly
  \[ \delta(\gamma) = \delta(\tau(\tau\inv \gamma)) \leq \delta(\tau\inv)\delta(\tau\inv
  \gamma) - 1 = \delta(\tau)\delta(\tau\inv \gamma) - 1. \]
  The bounds in the second statement now follow.
\end{proof}

\end{document}